\providecommand{\U}[1]{\protect\rule{.1in}{.1in}}
\providecommand{\U}[1]{\protect\rule{.1in}{.1in}}
\newtheorem{theorem}{Theorem}
\theoremstyle{plain}
\newtheorem{condition}{Condition}[section]
\newtheorem{definition}{Definition}[section]
\newtheorem{lemma}{Lemma}[section]
\newtheorem{proposition}{Proposition}[section]
\newtheorem{remark}{Remark}[section]
\numberwithin{equation}{section}
\numberwithin{theorem}{section}
\begin{document}
\title[Self-Adjoint Operators in Extended Hilbert Spaces]{Self-Adjoint Operators in Extended Hilbert Spaces $H\oplus W$: An Application
of the General GKN-EM Theorem}
\author{Lance L. Littlejohn and Richard Wellman}
\address{Department of Mathematics, Baylor University, Waco, TX 76798-7328 \\
Department of Mathematics and Computer Science, Westminster College, Salt Lake
City, UT 84105 }
\email{rwellman@westminstercollege.edu}
\date{April 23, 2017 [wellmanlittlejohnEMfinalversion.tex]}
\subjclass{Primary 05C38, 15A15; Secondary 05A15, 15A18}
\keywords{symmetric operator, self-adjoint operator, differential operator, maximal
operator, minimal operator, Glazman-Krein-Naimark theory, symplectic GKN
theorem, orthogonal polynomialsct }

\begin{abstract}
We construct self-adjoint operators in the direct sum of a complex Hilbert
space $H$ and a finite dimensional complex inner product space $W$. The
operator theory developed in this paper for the Hilbert space $H\oplus W$ is
originally motivated by some fourth-order differential operators, studied by
Everitt and others, having orthogonal polynomial eigenfunctions. Generated by
a closed symmetric operator $T_{0}$ in $H$ with equal and finite deficiency
indices and its adjoint $T_{1}$, we define \textit{families} of minimal
operators $\{\widehat{T}_{0}\}$ and maximal operators $\{\widehat{T}_{1}\}$ in
the extended space $H\oplus W$ and establish, using a recent theory of complex
symplectic geometry, developed by Everitt and Markus, a characterization of
self-adjoint extensions of $\{\widehat{T}_{0}\}$ when the dimension of the
extension space $W$ is not greater than the deficiency index of $T_{0}$. A
generalization of the classical Glazman-Krein-Naimark (GKN) Theorem - called
the GKN-EM Theorem to acknowledge the work of Everitt and Markus - is key to
finding these self-adjoint extensions in $H\oplus W.$ We consider several
examples to illustrate our results.

\end{abstract}
\maketitle
\tableofcontents

\section{Introduction}

In \cite[p. 105]{ELW-Patras}, the authors list ten open problems related to
orthogonal polynomial eigenfunctions of differential equations. The one
pertinent to this present paper is the following (paraphrased to simplify the
original notation):

\begin{quotation}
The GKN theory provides a recipe, in theory, for determining all self-adjoint
extensions in the Hilbert space $L^{2}(I;w)$ of formally symmetric
differential expressions of the form%
\begin{equation}
\ell_{2r}[y](u)=\dfrac{1}{w(u)}\sum_{j=0}^{r}(-1)^{j}(q_{j}(u)y^{(j)}%
(u))^{(j)}\quad(u\in I) \label{Symmetric DE}%
\end{equation}
on some open interval $I=(a,b);$ we assume here that $w>0$ and each
coefficient $q_{j}$ is sufficiently differentiable on $I.$ This theory works
well in developing the spectral theory for the second-order classical
differential equations of Jacobi, Laguerre, and Hermite.\footnote{The GKN
theory is applicable as well to developing the self-adjoint operator theory
associated with exceptional orthogonal polynomials.} However, for nonclassical
symmetric differential equations (\ref{Symmetric DE}) with orthogonal
polynomial solutions$,$ the appropriate right-definite setting is a
Hilbert-Sobolev space $S$ with orthogonalizing Sobolev inner product%
\begin{equation}
\left\langle f,g\right\rangle =\int_{a}^{b}f(u)\overline{g}(u)w(u)du+\sum
_{j=0}^{p}\left(  \alpha_{j}f^{(j)}(a)\overline{g}^{(j)}(a)+\beta_{j}%
f^{(j)}(b)\overline{g}^{(j)}(b)\right)  . \label{General Orthogonalizing IP}%
\end{equation}
The Sobolev space $S$ has the form $L^{2}(I;w)\oplus\mathbb{C}^{k}$ for some
$k\leq2p.$ Develop a general GKN-type theory for this setting; in particular,
provide a `recipe' for determining the self-adjoint operator having the
orthogonal polynomials as eigenfunctions.
\end{quotation}

In this paper, we answer this question. In fact, we will see that we can
provide a recipe for \textit{all} self-adjoint operators, generated by
$\ell_{2r}[\cdot],$ in this Sobolev setting. Our result is a generalization of
the Glazman-Krein-Naimark (GKN) theory of self-adjoint extensions of
Lagrangian symmetric ordinary differential expressions in a weighted Hilbert
space $L^{2}(I;w),$ where $I$ is an interval of the real line $\mathbb{R}.$

This work is originally motivated by fourth-order differential equations
having non-classical orthogonal polynomials as eigenfunctions. In each of
these fourth-order examples, the orthogonalizing inner product has the form%
\[
\left\langle f,g\right\rangle =Af(a)\overline{g}(a)+\int_{a}^{b}%
f(u)\overline{g}(u)w(u)du+Bf(b)\overline{g}(b),
\]
where $A,B\geq0.$ Indeed, H. L. Krall \cite{HLKrall-1938, HLKrall-1940}
classified, up to a complex linear change of variable, these orthogonal
polynomials which were subsequently named the Legendre type, Laguerre type and
Jacobi type polynomials and studied extensively by A. M. Krall in
\cite{AMKrall-1981}. Following the work of the two Kralls, other contributions
connecting orthogonal polynomial eigenfunctions to higher-order differential
equations have emerged; all known examples have polynomial eigenfunctions
orthogonal with respect to an inner product of the form
(\ref{General Orthogonalizing IP}). These various contributions are far too
numerous to list in this manuscript but we refer to the Erice and Patras
reports \cite{EL-Erice, ELW-Patras} for further details and the references
therein contained.

In \cite{EKL-QM, EL-DIE}, the authors construct the Legendre type self-adjoint
operator, generated by the fourth-order Legendre type differential expression%
\begin{equation}
\ell_{LT}[y](u):=(u^{2}-1)^{2}y^{(4)}(u)+8u(u^{2}-1)y^{(3)}(u)+(4A+12)(u^{2}%
-1)y^{\prime\prime}(u)+8Auy^{\prime}(u), \label{legendre type equation}%
\end{equation}
in the Hilbert space $L_{\mu}^{2}[-1,1].$ Here%
\[
L_{\mu}^{2}[-1,1]=\{f:[-1,1]\rightarrow\mathbb{C}\mid f\text{ is Lebesgue
measurable and }\left\langle f,f\right\rangle _{\mu}=\int_{[-1,1]}\left\vert
f\right\vert ^{2}d\mu<\infty\},
\]
where%
\[
\left\langle f,g\right\rangle _{\mu}=Af(-1)\overline{g}(-1)+\int_{-1}%
^{1}f(u)\overline{g}(u)du+Af(1)\overline{g}(1).
\]
We note that $L_{\mu}^{2}[-1,1]$ is isometrically isomorphic to $L^{2}%
(-1,1)\oplus\mathbb{C}^{2}.$ The classical Glazman-Krein-Naimark (GKN) theory
of self-adjoint extensions of Lagrangian symmetric differential expressions is
not (immediately) applicable in this situation. To develop the appropriate
operator theory in $L_{\mu}^{2}[-1,1],$ Everitt and Littlejohn studied
properties of functions in the maximal domain $\Delta$ of $\ell_{LT}[\cdot]$
in the base space $L^{2}(-1,1).$ They prove the surprising smoothness
condition
\[
f\in\Delta\Rightarrow f^{\prime\prime}\in L^{2}(-1,1)
\]
from which it follows that $f,f^{\prime}\in AC[-1,1].$ Using standard
operator-theoretic methods, they then prove that the operator $T:\mathcal{D}%
(T)\subset L_{\mu}^{2}[-1,1]\rightarrow L_{\mu}^{2}[-1,1]$ defined by%
\begin{align*}
(Tf)(u)  &  =\left\{
\begin{array}
[c]{ll}%
-8Af^{\prime}(-1) & u=-1\\
\ell_{LT}[f](u) & -1<u<1\\
8Af^{\prime}(1) & u=1
\end{array}
\right. \\
\mathcal{D}(T)  &  =\Delta
\end{align*}
is self-adjoint. It is remarkable that $\Delta$ is the domain of the
self-adjoint operator $T$ in $L_{\mu}^{2}[-1,1].$ Indeed, the expression
$\ell_{LT}[\cdot]$ is in the limit-3 case at both singular endpoints $x=\pm1$
in $L^{2}(-1,1)$ so every self-adjoint operator in $L^{2}(-1,1),$ generated by
$\ell_{LT}[\cdot],$ is necessarily determined by two appropriate boundary
restrictions on the space $\Delta.$

We will re-examine this Legendre type example in Section \ref{Section Four} as
an application of the results developed in this paper. To this end, let
$(W,\left\langle \cdot,\cdot\right\rangle _{W})$ be a
\textit{finite-dimensional} complex inner product space and assume $\left(
H,\left\langle \cdot,\cdot\right\rangle _{H}\right)  $ is a complex Hilbert
space. Then $H\oplus W$, the direct sum of $H$ and $W,$ is the Hilbert space
defined by%

\begin{equation}
H\oplus W=\{(x,a)\mid x\in H,a\in W\} \label{H+W space}%
\end{equation}
with inner product%
\begin{equation}
\left\langle (x,a),(y,b)\right\rangle _{H\oplus W}:=\left\langle
x,y\right\rangle _{H}+\left\langle a,b\right\rangle _{W} \label{H+W IP}%
\end{equation}
and associated norm%
\[
\left\Vert (x,a)\right\Vert _{H\oplus W}^{2}=\left\Vert x\right\Vert _{H}%
^{2}+\left\Vert a\right\Vert _{W}^{2}.
\]
Throughout this paper, we refer to $H\oplus W$ as an \textit{extended Hilbert
space} and call $H$ the \textit{base} \textit{space }and $W$ the
\textit{extension space}.

Our starting point in this paper - assumptions we keep throughout this article
- is a closed, symmetric operator $T_{0}$ in $H$ having equal and finite
deficiency indices, denoted by their common value \textrm{def}$(T_{0}),$ and
adjoint operator $T_{1}$ satisfying the inclusions
\[
T_{1}^{\ast}=T_{0}\subseteq T_{0}^{\ast}=T_{1}.
\]
We call $T_{0}$ the \textit{minimal} operator and $T_{1}$ the \textit{maximal}
operator in $H.$ Then, under the essential assumption that
\[
\mathrm{def}(T_{0})\leq\dim W,
\]
we construct one-parameter families $\{\widehat{T}_{0}\}$ of \textit{minimal
}operators and associated \textit{maximal} operators $\{\widehat{T}_{1}\}$ in
$H\oplus W,$ generated by $T_{0}$ and $T_{1}$ in $H,$ satisfying the
properties%
\[
\mathrm{def}\text{ }(\widehat{T}_{0})=\mathrm{def}(T_{0})\quad(\widehat{T}%
_{0}\in\{\widehat{T}_{0}\})
\]
and%
\[
\left(  \widehat{T}_{1}\right)  ^{\ast}=\widehat{T}_{0}\subseteq\left(
\widehat{T}_{0}\right)  ^{\ast}=\widehat{T}_{1}\quad(\widehat{T}_{0}%
\in\{\widehat{T}_{0}\},\text{ }\widehat{T}_{1}\in\{\widehat{T}_{1}\}).
\]
Both families $\{\widehat{T}_{0}\}$ and $\{\widehat{T}_{1}\}$ are parametrized
by an arbitrary, fixed self-adjoint operator $B:W\rightarrow W.$

With the constructions of $\{\widehat{T}_{0}\}$ and $\{\widehat{T}_{1}\}$ in
place, we then appeal to a general theory of complex symplectic algebra, with
important applications and implications to boundary value problems in ordinary
and partial differential equations, which was developed by Everitt and Markus
in a series of remarkable papers \cite{EM1, EM2, EM3, EM4}. An important
consequence of their theory is a generalized GKN theory - which we call GKN-EM
theory after the contributions of Everitt and Markus - that we apply to
characterize all self-adjoint extensions (respectively, restrictions) of
$\widehat{T}_{0}\in\{\widehat{T}_{0}\}$ (respectively, of $\widehat{T}_{1}%
\in\{\widehat{T}_{1}\}$).

The contents of this paper are as follows. In Section \ref{Section Two}, we
briefly discuss the Stone-von Neumann theory of self-adjoint extensions of
symmetric operators in a Hilbert space as well as the now classic GKN theory,
including a statement of the GKN Theorem (Theorem \ref{The GKN Theorem}).
Section \ref{Section Two.Five} deals with key complex symplectic geometric
results developed by Everitt and Markus and culminates in the GKN-EM Theorem
(Theorem \ref{The GKN-EM Theorem}). The families $\{\widehat{T}_{0}\}$ and
$\{\widehat{T}_{1}\}$ of minimal and maximal operators in $H\oplus W,$
generated by $T_{0}$ and $T_{1}$ in the base space $H,$ are developed in
Section \ref{Section Three}. Another key notion, the symplectic form
$[\cdot,\cdot]_{H\oplus W}$ in the extended space $H\oplus W$, essential to
our application of the GKN-EM theory, is defined in Section
\ref{Section Three}. Also, in this section, we apply Theorem
\ref{The GKN-EM Theorem} to characterize all self-adjoint extensions
$\{\widehat{T}\}$ of $\widehat{T}_{0}\in\{\widehat{T}_{0}\}$; see the summary
theorem given in Theorem \ref{Self-Adjoint Operators in H+W}. Lastly, Section
\ref{Examples} deals with several examples to illustrate our results. These
examples include another look at the Legendre type example where further light
is shed on this particular example. Indeed, we show that, remarkably,
\textit{continuity} is a GKN-EM boundary condition. \bigskip

\underline{Notation}: $\mathbb{R},\mathbb{C}$ and $\mathbb{N}$ will denote,
respectively, the sets of real numbers, the complex numbers and the positive
integers. All inner products in this paper will be denoted by $\left\langle
\cdot,\cdot\right\rangle ,$ properly subscripted indicating the particular
underlying vector space. Ordered pairs in $H\oplus W$ will be written as
$\left(  \cdot,\cdot\right)  ;$ if $\dim W>1,$ then an ordered pair in
$H\oplus W$ will have the form $(\cdot,(\cdot,\cdot)).$ Our \textit{base}
space will be a complex Hilbert space $(H,\left\langle \cdot,\cdot
\right\rangle _{H})$, our \textit{extension} space will be finite-dimensional
complex Hilbert space $(W,\left\langle \cdot,\cdot\right\rangle _{W})$ and the
\textit{extended} space will be the direct sum space $(H\oplus W,\left\langle
\cdot,\cdot\right\rangle _{H\oplus W}).$ Linear operators in the base space
$H$ will be denoted by $T_{0},$ $T_{1},$ $T,$ etc. while operators in the
extended space $H\oplus W$ will be hatted: $\widehat{T}_{0},$ $\widehat{T}%
_{1},$ $\widehat{T},$ etc. The notation%
\[
x\text{ has property }P\quad(x\in A)
\]
means that property $P$ holds for all $x$ in the set $A.$ Lastly, the
cardinality of a set $A$ is denoted by \textrm{card}$(A)$ whereas the
dimension of subspace $W$ of some vector space will be written $\dim W.$

\section{The von-Neumann Formulas and the GKN Theorem\label{Section Two}}

Standard references for topics discussed in this section are \cite{Akhiezer
and Glazman, Dunford and Schwartz, Naimark, Reed and Simon, Rudin, Stone,
Weidmann, Zettl-book}.

Throughout this paper, the linear operator $T_{0}:\mathcal{D}(T_{0})\subseteq
H\rightarrow H$ will be an arbitrary closed, symmetric operator in $H$ while
$T_{1}:\mathcal{D}(T_{1})\subseteq H\rightarrow H$ is a linear operator
satisfying the operator inclusions
\begin{equation}
T_{1}^{\ast}=T_{0}\subseteq T_{0}^{\ast}:=T_{1}; \label{Property 1 of T}%
\end{equation}
in particular, we see that $T_{0}$ and $T_{1}$ are adjoints of each other.
Because of the inclusions in (\ref{Property 1 of T}), we call $T_{0}$ the
\textit{minimal }operator and $T_{1}$ the \textit{maximal }operator. Specific
reasons for this notation will be discussed below in this section (see also
Remark \ref{Remark 0}). Notice that if $T_{0}$ has a self-adjoint extension
$T$ in $H,$ then
\[
T_{0}\subseteq T=T^{\ast}\subseteq T_{0}^{\ast}=T_{1},
\]
so $T$ necessarily has the same form as $T_{1};$ that is,%
\[
Tx=T_{1}x\quad(x\in\mathcal{D}(T)).
\]
The general theory of self-adjoint extensions of the minimal operator $T_{0}$
(equivalently, self-adjoint restrictions of the maximal operator $T_{1})$ in a
Hilbert space - called the Stone-von Neumann theory - is discussed in depth in
\cite[Chapter XII, Section 4]{Dunford and Schwartz}. Of central importance in
this theory are two particular subspaces $X_{\pm}$ of $\mathcal{D}(T_{1})$,
defined by
\[
X_{\pm}:=\left\{  x\in\mathcal{D}(T_{1})\ \mid T_{1}x=\pm ix\right\}  ,
\]
where $i=\sqrt{-1}.$ These spaces are called the \textit{positive} and
\textit{negative deficiency spaces} of $T_{0}.$ The \textit{first von Neumann
formula} decomposes the maximal domain $\mathcal{D}(T_{1})$ into linearly
independent submanifolds:

\begin{theorem}
[The First von Neumann Formula]\label{First von Neumann Formula}%
$\mathcal{D}(T_{1})=\mathcal{D}(T_{0})+X_{+}+X_{-}.$
\end{theorem}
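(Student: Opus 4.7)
The plan is to equip $\mathcal{D}(T_1)$ with the graph inner product $\langle x,y\rangle_{T_1} := \langle x,y\rangle_H + \langle T_1 x, T_1 y\rangle_H$. Since $T_1 = T_0^{*}$ is automatically closed, $(\mathcal{D}(T_1),\langle\cdot,\cdot\rangle_{T_1})$ is a Hilbert space. I would realize the claimed decomposition as an \emph{orthogonal} direct sum inside this graph Hilbert space; orthogonality then automatically delivers the required linear independence of the three submanifolds.

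The first step is pairwise orthogonality. For $x_0\in\mathcal{D}(T_0)$ and $x_+\in X_+$, the adjoint relation gives $\langle T_0 x_0, x_+\rangle_H = \langle x_0, T_1 x_+\rangle_H = -i\langle x_0,x_+\rangle_H$, and therefore $\langle x_0,x_+\rangle_{T_1} = \langle x_0,x_+\rangle_H + \langle T_0 x_0, ix_+\rangle_H = \langle x_0,x_+\rangle_H - i(-i)\langle x_0,x_+\rangle_H = 0$; the case of $X_-$ is symmetric. For $x_+\in X_+$ and $x_-\in X_-$ one computes directly $\langle x_+,x_-\rangle_{T_1} = \langle x_+,x_-\rangle_H + \langle ix_+, -ix_-\rangle_H = 0$.

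The second and main step is surjectivity of the sum. Each of $\mathcal{D}(T_0)$, $X_+$, $X_-$ is closed in the graph norm: $\mathcal{D}(T_0)$ because $T_0$ is closed, and $X_{\pm}$ because they are kernels of the closed operators $T_1 \mp iI$. Hence the orthogonal sum $M := \mathcal{D}(T_0) \oplus X_+ \oplus X_-$ is a closed subspace of the graph Hilbert space, and it suffices to prove $M^{\perp} = \{0\}$. Suppose $y\in\mathcal{D}(T_1)$ is orthogonal (in the graph inner product) to every $x_0\in\mathcal{D}(T_0)$; this unpacks as $\langle T_1 y, T_0 x_0\rangle_H = -\langle y, x_0\rangle_H$ for all $x_0\in\mathcal{D}(T_0)$, which is exactly the statement that $T_1 y\in\mathcal{D}(T_0^{*}) = \mathcal{D}(T_1)$ with $T_1^{2} y = -y$. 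Setting $y_{\pm} := \tfrac{1}{2}(y \mp i T_1 y)$ then yields $y_{\pm}\in\mathcal{D}(T_1)$, $T_1 y_{\pm} = \pm i y_{\pm}$, and $y = y_+ + y_-\in X_+ + X_-\subseteq M$. Thus $M^{\perp}\subseteq M$, forcing $M = \mathcal{D}(T_1)$.

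The conceptual obstacle is recognizing that closedness of $T_0$ (making the graph norm complete) together with the adjoint identity $T_0^{*} = T_1$ is precisely what converts the decomposition question into an orthogonal-complement computation in a Hilbert space. Once that framing is in place the remaining work is routine bookkeeping with $\pm i$ factors; the only real pitfall is tracking the conjugate-linearity of the inner product in its second slot.
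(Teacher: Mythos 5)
Your proof is correct, and it is precisely the standard graph-inner-product argument: the paper itself states this theorem without proof (citing Dunford--Schwartz), but its surrounding remarks --- that $\mathcal{D}(T_1)$ is a Hilbert space under the graph inner product (\ref{Graph IP}) and that $\mathcal{D}(T_0)$, $X_+$, $X_-$ are closed, mutually orthogonal subspaces --- describe exactly the decomposition you carry out. Your key step (orthogonality to $\mathcal{D}(T_0)$ forces $T_1^2y=-y$, whence $y=y_++y_-$ with $y_\pm=\tfrac12(y\mp iT_1y)$) is the classical completion of that outline, so there is nothing to add.
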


\noindent In fact, the sum in this formula is actually an orthogonal direct
sum. Indeed, under the graph inner product%
\begin{equation}
\left\langle x,y\right\rangle _{H}^{\ast}:=\left\langle x,y\right\rangle
_{H}+\left\langle T_{1}x,T_{1}y\right\rangle _{H}\quad(x,y\in\mathcal{D}%
(T_{1})) \label{Graph IP}%
\end{equation}
and associated norm%
\begin{equation}
\left(  \left\Vert x\right\Vert _{H}^{\ast}\right)  ^{2}=\left\Vert
x\right\Vert _{H}^{2}+\left\Vert T_{1}x\right\Vert _{H}^{2}\geq\left\Vert
x\right\Vert _{H}^{2}, \label{Graph Norm}%
\end{equation}
$\mathcal{D}(T_{1})$ is a Hilbert space and, with this inner product,
$\mathcal{D}(T_{0}),$ $X_{+}$ and $X_{-}$ are closed, orthogonal subspaces of
$\mathcal{D}(T_{1});$ see \cite[Chapter XII]{Dunford and Schwartz}. Notice
that if $x\in\mathcal{D}(T_{1})$ and
\[
x=x^{0}+x^{+}+x^{-},
\]
where $x^{0}\in\mathcal{D}(T_{0})$ and $x^{\pm}\in X_{\pm},$ then%
\begin{equation}
\left(  \left\Vert x\right\Vert _{H}^{\ast}\right)  ^{2}=\left(  \left\Vert
x^{0}\right\Vert _{H}^{\ast}\right)  ^{2}+\left(  \left\Vert x^{+}\right\Vert
_{H}^{\ast}\right)  ^{2}+\left(  \left\Vert x^{-}\right\Vert _{H}^{\ast
}\right)  ^{2}. \label{Orthogonality and Graph Norm}%
\end{equation}

\noindent The dimensions of $X_{\pm},$ denoted by $\mathrm{\dim}(X_{\pm}), $
are called the \textit{positive and negative deficiency indices} of $T_{0}$. A
key result in the Stone-von Neumann theory is that the equality of these
deficiency indices is equivalent to the existence of self-adjoint extensions
$T$ of $T_{0}$ in $H.$ Moreover, if \textrm{dim}$(X_{+})=$ \textrm{dim}%
$(X_{-})=0,$ $T_{0}=T_{1}$ is self-adjoint and is, in fact, the only
self-adjoint extension of $T_{0}$ in $H.$ In the case that \textrm{dim}%
$(X_{+})=$ \textrm{dim}$(X_{-}),$ we refer to this common value as the
\textit{deficiency index }and denote it by\textit{\ }$\mathrm{def}(T_{0}).$ In
addition to requiring the equality of these deficiency indices for the
entirety of this paper, we assume the deficiency indices are also finite.
Thus, another key assumption in this paper is:

\begin{condition}
\label{eq_def_ind} $1\leq\mathrm{def}(T_{0})$:$=$ $\mathrm{dim}(X_{+})=$
$\mathrm{dim}(X_{-})$ $<\infty.$
\end{condition}

The \textit{second von Neumann formula }gives a description of the domain of
any self-adjoint extension $T$ of $T_{0}$ in $H$:

\begin{theorem}
[The Second von Neumann Formula]Let $T:\mathcal{D}(T)\subseteq H\rightarrow H$
be a self-adjoint extension of $T_{0}.$ Then there exists an isometric
isomorphism $V:X_{+}\rightarrow X_{-}$ from the positive deficiency space
$X_{+}$ onto the negative deficiency space $X_{-}$ such that
\begin{align}
Tx  &  =T_{1}x\label{Form of SA}\\
\mathcal{D}(T)  &  =\{x+x_{+}+Vx_{+}\mid x\in\mathcal{D}(T_{0}),x_{+}\in
X_{+}\}. \label{2nd von Neumann formula}%
\end{align}
Conversely, if $T$ and its domain $\mathcal{D}(T)$ are defined through
$($\ref{Form of SA}$)$ and $($\ref{2nd von Neumann formula}$)$ for some
isometric isomorphism $V:X_{+}\rightarrow X_{-}$ , then $T$ is a self-adjoint
extension of $T_{0}.$
\end{theorem}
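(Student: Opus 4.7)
The plan is to work through the standard boundary/symplectic form
\[
[y,z] := \langle T_1 y, z\rangle_H - \langle y, T_1 z\rangle_H \quad (y,z \in \mathcal{D}(T_1)),
\]
which vanishes whenever either argument belongs to $\mathcal{D}(T_0)$ (this is exactly the defining property of the adjoint pair $T_0 \subseteq T_0^* = T_1$). Combined with the eigenvalue identities $T_1 x_\pm = \pm i x_\pm$ on $X_\pm$, a direct sesquilinear computation gives
\[
[y,z] = 2i\bigl(\langle y^+, z^+\rangle_H - \langle y^-, z^-\rangle_H\bigr),
\]
whenever $y = y^0 + y^+ + y^-$ and $z = z^0 + z^+ + z^-$ are the decompositions afforded by the First von Neumann Formula (Theorem \ref{First von Neumann Formula}). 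Every structural assertion of the theorem will be read off this identity.

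For the forward direction, if $T$ is self-adjoint, the chain $T_0 \subseteq T = T^* \subseteq T_1$ immediately yields $Tx = T_1 x$ on $\mathcal{D}(T)$. I would then decompose each $y \in \mathcal{D}(T)$ via the First von Neumann Formula and define $V y^+ := y^-$ on the set of $X_+$-components that arise. Symmetry of $T$ forces $[y,z] = 0$ on $\mathcal{D}(T)$, so the boxed identity yields $\langle y^+, z^+\rangle_H = \langle y^-, z^-\rangle_H$; choosing $z = y$ shows $V$ is well-defined and isometric, while the linearity of the First von Neumann decomposition carries through to $V$. Surjectivity of $V$ onto $X_-$ and its being defined on all of $X_+$ will follow from maximality of $\mathcal{D}(T)$ among subspaces of $\mathcal{D}(T_1)$ on which $[\cdot,\cdot]$ vanishes, together with the equality $\dim X_+ = \dim X_-$ of Condition \ref{eq_def_ind}: were either property to fail, one could adjoin a nonzero vector to $\mathcal{D}(T)$ preserving $[\cdot,\cdot] \equiv 0$, contradicting $T = T^*$.

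For the converse, given an isometric isomorphism $V : X_+ \to X_-$, define $T$ and $\mathcal{D}(T)$ by the stated formulas. Symmetry of $T$ is immediate, since for $y,z \in \mathcal{D}(T)$ the boxed identity reduces to $2i(\langle y^+, z^+\rangle_H - \langle V y^+, V z^+\rangle_H) = 0$ by the isometry of $V$. To upgrade to $T^* \subseteq T$, I would pick any $z \in \mathcal{D}(T^*) \subseteq \mathcal{D}(T_1)$, decompose $z = z^0 + z^+ + z^-$, and test $[y, z] = 0$ against vectors $y = x_+ + V x_+ \in \mathcal{D}(T)$ for arbitrary $x_+ \in X_+$. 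The resulting identity $\langle x_+, z^+\rangle_H = \langle V x_+, z^-\rangle_H$ must hold for every $x_+ \in X_+$, and surjectivity of $V$ (so that $V x_+$ sweeps all of $X_-$) together with its isometric character then forces $z^- = V z^+$, placing $z$ in $\mathcal{D}(T)$ as required.

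The principal obstacle is this last step: pinning down $\mathcal{D}(T^*)$ exactly, rather than merely obtaining $T \subseteq T^*$. Extracting the relation $z^- = V z^+$ uses both the isometry and the surjectivity of $V$ in an essential way and depends on the finite-dimensionality assumption in Condition \ref{eq_def_ind} to guarantee that the test vectors $x_+ + V x_+$ probe the full components $z^\pm$. The remaining arguments (well-definedness, linearity, symmetry) are largely mechanical once the boundary-form identity has been established.
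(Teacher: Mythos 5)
The paper does not prove this theorem: it is quoted as part of the classical Stone--von Neumann theory with a citation to Dunford--Schwartz, so there is no in-paper argument to compare against. Your proposal is essentially correct and follows the ``boundary form'' route rather than the classical Cayley-transform route of the cited reference (where one passes to the unitary extensions of $(T_0-i)(T_0+i)^{-1}$ and reads $V$ off the action on $X_\pm$). Your approach has the advantage of meshing directly with the machinery the paper actually develops in Sections 2--3: the identity $[y,z]=2i\bigl(\langle y^+,z^+\rangle_H-\langle y^-,z^-\rangle_H\bigr)$ is correct (it uses Lemma \ref{minimal domain} to kill the $\mathcal{D}(T_0)$-components and the cross terms $[y^+,z^-]=[y^-,z^+]=0$), the well-definedness and isometry of $V$ from $[y,y]=0$ are right, and the converse argument identifying $\mathcal{D}(T^*)$ via $\langle x_+,z^+\rangle_H=\langle Vx_+,z^-\rangle_H$ for all $x_+$ is sound because a surjective isometry satisfies $V^*=V^{-1}$.

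One small imprecision worth fixing: in the forward direction you justify totality and surjectivity of $V$ by saying one could otherwise ``adjoin a nonzero vector to $\mathcal{D}(T)$ preserving $[\cdot,\cdot]\equiv 0$.'' That is not quite what happens: if $u\in X_+$ is orthogonal to all the $+$-components arising from $\mathcal{D}(T)$, then $[u,u]=2i\Vert u\Vert_H^2\neq 0$, so the enlarged space is \emph{not} symmetric. The correct (and simpler) contradiction is that $[y,u]=0$ for all $y\in\mathcal{D}(T)$ already places $u$ in $\mathcal{D}(T^*)\setminus\mathcal{D}(T)$, contradicting $T=T^*$; the same argument with $v\in X_-$ orthogonal to the $-$-components gives surjectivity. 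Everything needed for this repair is already present in your setup, so the gap is cosmetic rather than structural.
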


The Glazman-Krein-Naimark (GKN) theory is both a refinement and an application
of the Stone-von Neumann theory to self-adjoint operator extensions of
ordinary differential expressions. Excellent expositions of this theory can be
found in Akhiezer and Glazman \cite[Volume II, Chapter 8]{Akhiezer and
Glazman} and Naimark \cite[Part II, Chapter V]{Naimark}. To describe this
theory we assume, for the sake of simplicity, that $\ell\lbrack\cdot]$ is a
real, $2n$-th order Lagrangian symmetrizable differential expression of the
form
\begin{equation}
\ell\lbrack y](u)=\dfrac{1}{w(u)}\sum_{j=0}^{n}(-1)^{j}\left(  q_{j}%
(u)y^{(j)}(u)\right)  ^{(j)}\quad(u\in I), \label{ode}%
\end{equation}
where each coefficient $q_{j}:I\rightarrow\mathbb{R}$ in (\ref{ode}) is
$j$-times continuously differentiable on $I$ (noting, however, that general
`quasi-differentiable' conditions can be placed on these coefficients; see
also \cite{Zettl-RMJM}). The setting for the study of $\ell\lbrack\cdot]$ is
the Hilbert space
\[
L^{2}(I;w)=\{f:I\rightarrow\mathbb{C\mid}f\text{ is Lebesgue measurable and
}\int_{I}\left\vert f\right\vert ^{2}wdu<\infty\}
\]
endowed with the standard inner product%
\[
\left\langle f,g\right\rangle _{w}=\int_{a}^{b}f(u)\overline{g}(u)w(u)du\quad
(f,g\in L^{2}(I;w)).
\]
Here $I\subseteq\mathbb{R}$ is an open interval and $w$ is a positive (a.e.)
Lebesgue measurable function on $I$. The \textit{maximal operator}
$L_{1}:\mathcal{D}(L_{1})\subseteq L^{2}(I;w)\rightarrow L^{2}(I;w),$
generated by $\ell\lbrack\cdot],$ is defined to be
\begin{align*}
L_{1}f  &  =\ell\lbrack f]\\
f\in\mathcal{D}(L_{1})  &  =\{f:I\rightarrow\mathbb{C}\mid f^{(j)}\in
AC_{\mathrm{loc}}(I)\text{ }(j=0,1,\ldots,2n-1);\text{ }f,\ell\lbrack f]\in
L^{2}(I;w)\}.
\end{align*}
In this setting, the term `maximal' is appropriate; indeed, $\mathcal{D}%
(L_{1})$ - which is called the \textit{maximal domain} - is the largest
subspace of $L^{2}(I;w)$ for which the expression $\ell\lbrack\cdot]$ acts on
and maps into $L^{2}(I;w).$ It is clear that $L_{1}$ is a densely defined
operator. We denote the adjoint\ of $L_{1}$ by $L_{0};$ it is natural then to
call $L_{0}$ the \textit{minimal} \textit{operator} generated by $\ell
\lbrack\cdot].$ The GKN theory shows that, in fact, $L_{1}$ and $L_{0}$ are
adjoint to each other and $L_{0}$ is a closed symmetric operator in
$L^{2}(I;w).$ More explicitly, $L_{1}^{\ast}=L_{0}$ and
\[
L_{0}=\overline{L_{0}}\subseteq L_{0}^{\ast}=L_{1}.
\]

\begin{remark}
\label{Remark 0}The operators $T_{0}$ and $T_{1},$ defined earlier, are
analogous to the minimal operator $L_{0}$ and maximal operator $L_{1},$
respectively. Because of this, we call $T_{0}$ and $T_{1}$, respectively, the
minimal and maximal operators even though, in the general situation, the terms
maximal and minimal may not seem as appropriate as they do in the GKN theory.
Likewise, we shall call their respective domains the minimal domain
$\mathcal{D}(T_{0})$ and the maximal domain $\mathcal{D}(T_{1})$.
\end{remark}

The domain $\mathcal{D}(L_{0})$ of the minimal operator is given explicitly by%
\begin{equation}
\mathcal{D}(L_{0})=\{f\in\mathcal{D}(L_{1})\mid\left.  \lbrack f,g]_{w}%
\right\vert _{a}^{b}=0\text{ for all }g\in\mathcal{D}(L_{1})\},
\label{Minimal Domain}%
\end{equation}
where $\left.  [\cdot,\cdot]_{w}\right\vert _{a}^{b}$ is the skew-symmetric
bilinear form obtained from the classic Green's formula%
\begin{equation}
\left\langle L_{1}f,g\right\rangle _{w}-\left\langle f,L_{1}g\right\rangle
_{w}=\left.  [f,g]_{w}\right\vert _{a}^{b}\quad(f,g\in\mathcal{D}(L_{1})).
\label{Green's formula}%
\end{equation}
Moreover, we note that Condition (\ref{eq_def_ind}) is automatically satisfied
in this setting. Indeed, the deficiency indices of $L_{0}$ are equal since
$\ell$ has real coefficients and thus%
\[
\ell\lbrack f]=if\text{ \ }\Longleftrightarrow\overline{\ell\lbrack f]}%
=\ell\lbrack\overline{f}]=-i\overline{f}.
\]
Moreover, in this case,
\[
0\leq\mathrm{def}(L_{0})\leq2n.
\]
We are now in position to state the GKN Theorem. Notice that this theorem
provides a `recipe' for constructing all self-adjoint extensions of the
minimal operator $L_{0}$ in $L^{2}(I;w)$ by specifying certain restrictions
(boundary conditions), using the bilinear form $[\cdot,\cdot]_{w},$ on the
maximal domain $\mathcal{D}(L_{1}).$ We emphasize, however, that the original
GKN Theorem is valid \textit{only }for the minimal operator $L_{0}$ associated
with a real Lagrangian symmetrizable differential expressions of even order in
the specific Hilbert space $L^{2}(I;w)$. Compare the statement of the GKN
Theorem below with that of the GKN-EM Theorem (Theorem
\ref{The GKN-EM Theorem}) at the end of the next section.

\begin{theorem}
[The GKN Theorem]\label{The GKN Theorem}Suppose $L_{0}$ and $L_{1}$ are,
respectively, the minimal and maximal operators in $L^{2}(I;w),$ generated by
the differential expression $\ell\lbrack\cdot],$ given in $($\ref{ode}$)$. In
addition, let $m=$ \textrm{def}$(L_{0})$ so $0\leq m\leq2n.$

\begin{enumerate}
\item[(i)] Suppose the set $\{g_{j}\mid j=1,\ldots,m\}\subseteq\mathcal{D}%
(L_{1})$ satisfies the two conditions

\begin{enumerate}
\item[(a)]
\begin{equation}
\sum_{j=1}^{m}\alpha_{j}g_{j}\in\mathcal{D}(L_{0})\Longrightarrow\alpha
_{j}=0\text{ }(j=1,\ldots,m)\text{ and } \label{LI modulo minimal domain}%
\end{equation}

\item[(b)]
\begin{equation}
\left.  \lbrack g_{i},g_{j}]_{w}\right\vert _{a}^{b}=0\quad(i,j=1,\ldots
,\text{\textrm{def}}(L_{0})). \label{Glazman symmetry conditions}%
\end{equation}

\end{enumerate}

\noindent Define the operator $S:\mathcal{D}(S)\subseteq L^{2}(I;w)\rightarrow
L^{2}(I;w)$ by
\begin{align}
Sf  &  =L_{1}f\label{Operator S}\\
f\in\mathcal{D}(S)  &  =\{f\in\mathcal{D}(L_{1})\mid\left.  \lbrack
f,g_{j}]_{w}\right\vert _{a}^{b}=0\quad(j=1,\ldots,\text{\textrm{def}}%
(L_{0}))\}. \label{Domain of S}%
\end{align}
Then $S$ is a self-adjoint extension of the minimal operator $L_{0}$ in
$L^{2}(I;w).$

\item[(ii)] Conversely, if $S:\mathcal{D}(S)\subseteq L^{2}(I;w)\rightarrow
L^{2}(I;w)$ is a self-adjoint extension of the minimal operator $L_{0}$ in
$L^{2}(I;w),$ then there exists a set $\{g_{j}\mid j=1,\ldots,m\}\subseteq
\mathcal{D}(L_{1})$ satisfying the conditions $($%
\ref{LI modulo minimal domain}$)$ and $($\ref{Glazman symmetry conditions}$)$
such that $S$ is given explicitly by $($\ref{Operator S}$)\ $and
$($\ref{Domain of S}$).$
\end{enumerate}
\end{theorem}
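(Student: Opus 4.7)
The proof I would write organizes itself around the quotient space $\mathcal{Q}:=\mathcal{D}(L_{1})/\mathcal{D}(L_{0})$ and the boundary form $[\cdot,\cdot]_{w}|_{a}^{b}$ viewed on $\mathcal{Q}$. The plan is to first establish two structural facts and then deduce both directions of the theorem from them. \textbf{Fact A:} By the first von Neumann formula together with Condition~\ref{eq_def_ind}, $\dim\mathcal{Q}=2m$, where $m=\mathrm{def}(L_{0})$. \textbf{Fact B:} Green's formula (\ref{Green's formula}) shows that $[\cdot,\cdot]_{w}|_{a}^{b}$ vanishes whenever either argument lies in $\mathcal{D}(L_{0})$ (by the very definition (\ref{Minimal Domain})), so it descends to a well-defined skew-Hermitian form $\omega$ on $\mathcal{Q}$; the content of this step is that $\omega$ is \emph{non-degenerate}. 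I would prove non-degeneracy by contradiction: if $[f,g]_{w}|_{a}^{b}=0$ for all $g\in\mathcal{D}(L_{1})$, then Green's formula forces $f\in\mathcal{D}(L_{1}^{\ast})=\mathcal{D}(L_{0})$, so $[f]=0$ in $\mathcal{Q}$.

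For the sufficiency part (i), the key reformulation is that $\mathcal{D}(S)$ is the preimage in $\mathcal{D}(L_{1})$ of the subspace $M:=\mathrm{span}\{[g_{1}],\dots,[g_{m}]\}\subseteq\mathcal{Q}$ under the orthogonality relation induced by $\omega$. The hypotheses on $\{g_{j}\}$ say exactly that $\dim M=m$ (by (\ref{LI modulo minimal domain})) and that $M$ is $\omega$-isotropic (by (\ref{Glazman symmetry conditions})). Since $\omega$ is non-degenerate of rank $2m$, the $\omega$-annihilator of $M$ has dimension $2m-m=m$; but $M$ itself sits inside this annihilator and has dimension $m$, so they coincide. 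This shows $\mathcal{D}(S)/\mathcal{D}(L_{0})=M$. Symmetry of $S$ is then immediate from Green's formula and the isotropy of $M$. For self-adjointness one checks $\mathcal{D}(S^{\ast})\subseteq\mathcal{D}(S)$: any $h\in\mathcal{D}(S^{\ast})$ lies in $\mathcal{D}(L_{1})$ (because $\mathcal{D}(L_{0})\subseteq\mathcal{D}(S)$, so $S^{\ast}\subseteq L_{0}^{\ast}=L_{1}$), and satisfies $[f,h]_{w}|_{a}^{b}=0$ for every $f\in\mathcal{D}(S)$; this means $[h]\in M^{\omega}=M$, so $h\in\mathcal{D}(S)$.

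For the necessity part (ii), let $S$ be a self-adjoint extension of $L_{0}$. By the second von Neumann formula, $\dim(\mathcal{D}(S)/\mathcal{D}(L_{0}))=m$. Choose representatives $g_{1},\dots,g_{m}\in\mathcal{D}(S)$ of any basis of $\mathcal{D}(S)/\mathcal{D}(L_{0})$; then (\ref{LI modulo minimal domain}) holds by construction, and (\ref{Glazman symmetry conditions}) holds because $S$ is symmetric: $[g_{i},g_{j}]_{w}|_{a}^{b}=\langle L_{1}g_{i},g_{j}\rangle_{w}-\langle g_{i},L_{1}g_{j}\rangle_{w}=\langle Sg_{i},g_{j}\rangle_{w}-\langle g_{i},Sg_{j}\rangle_{w}=0$. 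Let $S'$ be the operator defined from these $g_{j}$'s via (\ref{Operator S})--(\ref{Domain of S}). Part (i) says $S'$ is self-adjoint, and clearly $S\subseteq S'$ (each $f\in\mathcal{D}(S)$ pairs trivially with each $g_{j}\in\mathcal{D}(S)$ under $[\cdot,\cdot]_{w}|_{a}^{b}$, again by symmetry of $S$); two self-adjoint operators with one contained in the other must be equal, hence $S=S'$.

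The main obstacle is \textbf{Fact B}, the non-degeneracy of the descended form $\omega$ on the $2m$-dimensional quotient $\mathcal{Q}$. Everything else---the symmetry computation, the self-adjoint closure argument, and the matching of domains in (ii)---is an essentially linear-algebraic consequence of the fact that an isotropic subspace of maximal dimension $m$ in a non-degenerate skew-Hermitian space of dimension $2m$ equals its own annihilator (i.e.\ is Lagrangian). Once non-degeneracy is in hand, this Lagrangian correspondence between self-adjoint extensions and $m$-dimensional isotropic subspaces of $\mathcal{Q}$ is exactly what the GKN Theorem articulates in concrete analytic language, and the two directions fall out symmetrically.
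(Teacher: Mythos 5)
Your proof is correct, but a comparison with ``the paper's proof'' requires a caveat: the paper does not prove Theorem~\ref{The GKN Theorem} at all --- it is quoted as a classical result with references to Akhiezer--Glazman and Naimark. The machinery the paper actually develops lives in Section~\ref{Section Two.Five}, where the generalized GKN-EM Theorem is deduced from the Everitt--Markus correspondence between self-adjoint extensions and complete Lagrangians (Theorem~\ref{The GKN-EM Theorem I}), which is itself imported from \cite{EM4} without proof; Remark~\ref{Remark 3} then observes that the classical GKN Theorem is the special case $H=L^{2}(I;w)$. Your argument follows exactly that symplectic philosophy --- your $\mathcal{Q}$ is the boundary space \textsf{S}$^{\prime}$ of (\ref{S'}), your Fact B is Lemma~\ref{minimal domain}, your isotropic subspaces are the Lagrangians of Definition~\ref{Complete Lagrangian Definition}, and your identity $M=M^{\omega}$ for an $m$-dimensional isotropic $M$ in a non-degenerate $2m$-dimensional space is precisely the ``complete Lagrangian'' characterization of Lemma~\ref{New Characterization of complete Lagrangians} --- but it is more self-contained than the paper: you actually \emph{prove} the correspondence, via the annihilator dimension count $\dim M^{\omega}=2m-\dim M$ in one direction and the computation $S^{\ast}\subseteq L_{0}^{\ast}=L_{1}$ plus $[h]\in M^{\omega}=M$ in the other, rather than citing it. Two small points of hygiene: the phrase ``preimage of $M$ under the orthogonality relation'' should read ``preimage of the $\omega$-annihilator $M^{\omega}$'' (you then show $M^{\omega}=M$); and in part (i) you should record explicitly that $\mathcal{D}(L_{0})\subseteq\mathcal{D}(S)$, so that $S$ is densely defined (making $S^{\ast}$ meaningful) and is genuinely an \emph{extension} of $L_{0}$. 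Neither affects the validity of the argument.
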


\begin{remark}
\label{Remark -2}A collection of vectors $\{g_{j}\mid j=1,\ldots
,m\}\subseteq\mathcal{D}(L_{1})$ satisfying condition $($%
\ref{LI modulo minimal domain}$)$ are said to be linearly independent modulo
$\mathcal{D}(T_{0})$ while those that satisfy $($%
\ref{Glazman symmetry conditions}$)$ are said to satisfy Glazman symmetry
conditions. Further light, as well as a generalization, into these concepts be
made in the next section.
\end{remark}

\begin{remark}
\label{Remark -1}Each of the conditions
\[
\left.  \lbrack f,g_{j}]_{w}\right\vert _{a}^{b}=0\quad(j=1,\ldots
,\text{\textrm{def}}(L_{0})),
\]
given in $($\ref{Domain of S}$),$ is called a `boundary condition'. In the
case that \textrm{def}$(L_{0})=0,$ then $L_{1}$ $(=L_{0})$ is the only
self-adjoint extension of $L_{0}$ and, in this case, there are no boundary conditions.
\end{remark}

The GKN-EM Theorem, which we discuss in the next section in Theorem
\ref{The GKN-EM Theorem}, is a generalization of the GKN Theorem but,
remarkably, is valid in an \textit{arbitrary} Hilbert space for an
\textit{arbitrary} closed symmetric operator with equal and\textit{\ finite}
deficiency indices. This theorem is a highlight application of the general
complex symplectic theory developed by Everitt and Markus.

\section{Complex Symplectic Geometry and a Generalization of the GKN Theorem
\label{Section Two.Five}}

In a series of papers \cite{EM1, EM2, EM3, EM4}, Everitt and Markus developed
an extensive theory of complex symplectic geometry with applications to linear
ordinary and partial differential equations. Their work was motivated by their
interest in boundary value problems. In this section, we report on their
results that pertain to this manuscript. A highlight application of their
investigations is an important, and remarkable, generalization of Theorem
\ref{The GKN Theorem}; see Theorems \ref{The GKN-EM Theorem I} and
\ref{The GKN-EM Theorem} below. This generalization is key to the results we
establish in the next section.

\begin{definition}
\label{Complex Symplectic Space Definition}A complex symplectic space
\textsf{S }is a complex vector space together with a conjugate bilinear
$($sesquilinear$)$ complex-valued function $[\cdot:\cdot]:$ \textsf{S}$\times
$\textsf{S}$\rightarrow\mathbb{C}$ satisfying the properties

\begin{enumerate}
\item[(i)] $[c_{1}x_{1}+c_{2}x_{2}:y]=c_{1}[x_{1}:y]+c_{2}[x_{2}:y],$

\item[(ii)] $[x:y]=-\overline{[y:x]},$

\item[(iii)] $[x:$ \textsf{S}$]$ $=0$ $\Longrightarrow x=0$ $($non-degenerate
condition$)$.
\end{enumerate}

We call $[\cdot:\cdot]$ a $($non-degenerate$)$ symplectic form.
\end{definition}

Complex symplectic spaces are non-trivial generalizations (not merely
complexifications) of classical real symplectic spaces of Lagrangian and
Hamiltonian mechanics (see \cite{G-S}). Indeed, complex symplectic spaces have
a much wider scope and admit new applications. For example, whereas real
symplectic spaces cannot be odd dimensional, it is the case that, for every
$n\in\mathbb{N},$ there exists complex symplectic spaces of dimension $n.$

Along with their real symplectic counterparts, complex symplectic spaces
support the notion of \textit{Lagrangian} subspaces (see \cite{EM4} equation (1.10)).

\begin{definition}
\label{Complete Lagrangian Definition}A subspace \textsf{L} of a complex
symplectic space \textsf{S} is called Lagrangian if $[$\textsf{L} $:$
\textsf{L}$]=0;$ that is to say, when%
\[
\lbrack x:y]=0\quad(x,y\in\text{\textsf{L}}).
\]
A Lagrangian \textsf{L }$\subseteq$ \textsf{S }is called a complete Lagrangian
when
\[
x\in\text{\textsf{S} and }[x:\text{\textsf{L}}]=0\text{ }\Rightarrow\text{
}x\in\text{\textsf{L.}}%
\]

\end{definition}

We can characterize complete Lagrangian subspaces as follows. This
characterization is key for later results.

\begin{lemma}
\label{Characterization of Lagrangian subspaces}A Lagrangian subspace
\textsf{L }$\subseteq$ \textsf{S }is a complete Lagrangian if and only if
\begin{equation}
\text{\textsf{L }}=\{x\in\text{\textsf{S}}\mid\lbrack x:y]=0\text{ }%
(y\in\text{\textsf{L}})\}. \label{complete Lagrangian characterization}%
\end{equation}

\begin{proof}
Suppose \textsf{L }is a complete Lagrangian subspace of \textsf{S}. By
definition of complete, it is clear that $\{x\in$ \textsf{S}$\mid\lbrack
x:y]=0$ $(y\in$ \textsf{L}$)\}\subseteq$ \textsf{L}. On the other hand, if
$x\in$ \textsf{L }then $[x:y]=0$ for all $y\in$ \textsf{L} since \textsf{L }is
Lagrangian.\textsf{\ }Hence \textsf{L }$\subseteq\{x\in$ \textsf{S}%
$\mid\lbrack x:y]=0$ $(y\in$ \textsf{L}$)\}.$ Conversely, if \textsf{L }is
Lagrangian and given by $($\ref{complete Lagrangian characterization}$),$ then
it is clear that \textsf{L }is a complete.
\end{proof}
\end{lemma}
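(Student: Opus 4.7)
The plan is to prove both directions by unpacking the definitions of \emph{Lagrangian} and \emph{complete Lagrangian}, with essentially no nontrivial computation required. The key observation is that being Lagrangian means $[x:y] = 0$ for all $x,y \in \mathsf{L}$, which already gives one containment of the desired set equality, while the completeness condition is precisely what supplies the reverse containment.

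For the forward direction, assume $\mathsf{L}$ is a complete Lagrangian. Set $\mathsf{M} := \{x \in \mathsf{S} \mid [x:y] = 0 \text{ for all } y \in \mathsf{L}\}$. To show $\mathsf{L} \subseteq \mathsf{M}$, take $x \in \mathsf{L}$; since $\mathsf{L}$ is Lagrangian, $[x:y] = 0$ for every $y \in \mathsf{L}$, so $x \in \mathsf{M}$. For the reverse containment $\mathsf{M} \subseteq \mathsf{L}$, take $x \in \mathsf{M}$; then $[x:\mathsf{L}] = 0$, and the hypothesis that $\mathsf{L}$ is complete immediately forces $x \in \mathsf{L}$.

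For the converse, assume $\mathsf{L}$ is Lagrangian and satisfies the identity $\mathsf{L} = \{x \in \mathsf{S} \mid [x:y] = 0 \text{ for all } y \in \mathsf{L}\}$. To verify completeness, suppose $x \in \mathsf{S}$ with $[x:\mathsf{L}] = 0$; this means $[x:y] = 0$ for all $y \in \mathsf{L}$, so $x$ lies in the right-hand side of the identity, which by hypothesis equals $\mathsf{L}$. Hence $x \in \mathsf{L}$, confirming that $\mathsf{L}$ is a complete Lagrangian.

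There is no real obstacle here: the argument is a direct translation between two equivalent formulations of the same condition, and the whole proof consists of carefully tracking what "$[x:\mathsf{L}] = 0$" abbreviates in each context. The only point worth flagging is that the Lagrangian hypothesis is used in the forward direction to establish $\mathsf{L} \subseteq \mathsf{M}$ (without it, one would only get $\mathsf{M} \subseteq \mathsf{L}$ from completeness), so both pieces of the definition of \emph{complete Lagrangian} are genuinely needed to obtain the set equality.
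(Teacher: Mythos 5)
Your proof is correct and follows essentially the same route as the paper's: the forward direction uses completeness for one containment and the Lagrangian property for the other, and the converse unwinds the set identity to verify completeness directly. You spell out the converse slightly more explicitly than the paper (which dismisses it as clear), but the argument is the same.
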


An essential step in the work of Everitt and Markus is a natural
generalization of the skew-symmetric bilinear form $\left.  [\cdot,\cdot
]_{w}\right\vert _{a}^{b}$ given by Green's formula (\ref{Green's formula}).

\begin{definition}
\label{symplectic_form} $[x,y]_{H}:=\left\langle T_{1}x,y\right\rangle
_{H}-\left\langle x,T_{1}y\right\rangle _{H}$ for $x,y\in\mathcal{D}%
(T_{1})\ .$
\end{definition}

Following the work of Everitt and Markus, we will see below that $[\cdot
,\cdot]_{H}$ can be identified with a degenerate symplectic form. We also note
that $[\cdot,\cdot]_{H}$ coincides with $\left.  [\cdot,\cdot]_{w}\right\vert
_{a}^{b}$ in the case $T_{1}$ is the maximal differential operator, generated
by $\ell\lbrack\cdot]$ (see (\ref{ode})), in the weighted Hilbert space
$L^{2}(I;w)$.

As shown in \cite{EM4}, the quotient space%
\begin{equation}
\text{\textsf{S}}^{\prime}:=\mathcal{D}(T_{1})/\mathcal{D}(T_{0}), \label{S'}%
\end{equation}
with zero element \textsf{0} $=$ $\mathcal{D}(T_{0}),$ is a complex symplectic
space when endowed with the form $[\cdot,\cdot]_{H}$; we outline the specific
details below.

Notice that, from Theorem \ref{First von Neumann Formula} and Condition
\ref{eq_def_ind}, \textsf{S}$^{^{\prime}}$ has dimension $2$\textrm{def}%
$(T_{0}).$ Indeed one may view \textsf{S}$^{^{\prime}}$ as an isomorphic copy
of the orthogonal sum of the deficiency spaces $X_{\pm}$ of $T_{0}.$ Everitt
and Markus call the space \textsf{S}$^{^{\prime}}$ the \textit{boundary space}
of $T_{0}.$ The elements of \textsf{S}$^{^{\prime}}$ are, of course, cosets
\textsf{x} $=$ $\{x+\mathcal{D}(T_{0})\}$ ($x\in\mathcal{D}(T_{1})$). In this
case, we call the vector $x$ a \textit{representative vector} of the coset
$\{x+\mathcal{D}(T_{0})\}.$

We now consider the natural projection $\phi:\mathcal{D}(T_{1})\rightarrow$
\textsf{S}$^{^{\prime}}$ defined by
\[
\phi(x)=\{x+\mathcal{D}(T_{0})\}\quad\left(  x\in\mathcal{D}(T_{1})\right)  .
\]
The following proposition makes clear the connection between a basis of a
subspace of \textsf{S}$^{^{\prime}}$ and the notion of linear independence
modulo $\mathcal{D}(T_{0})$ which we first encountered in Theorem
\ref{The GKN Theorem} and Remark \ref{Remark -2}.

\begin{lemma}
\label{cosets I}A collection of cosets $\{\phi t_{j}\}_{j=1}^{d},$ where
$\{t_{j}\}_{j=1}^{d}\subseteq\mathcal{D}(T_{1}),$ is a basis for a subspace of
dimension $d$ of the boundary space \textsf{S}$^{^{\prime}}$ if and only if
the representative vectors $\{t_{j}\}_{j=1}^{d}$ satisfy
\[
\sum_{j=1}^{d}\alpha_{j}t_{j}\in\mathcal{D}(T_{0})\Longrightarrow\alpha
_{j}=0\quad(j=1,2,\ldots,d);
\]
that is to say, $\{t_{j}\}_{j=1}^{d}$ is linearly independent modulo
$\mathcal{D}(T_{0}).$
\end{lemma}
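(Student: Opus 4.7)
The plan is to unpack the statement as a straightforward application of the universal properties of the quotient projection $\phi \colon \mathcal{D}(T_1) \to \mathsf{S}' = \mathcal{D}(T_1)/\mathcal{D}(T_0)$. I would first record the two facts about $\phi$ that make everything work: it is linear, and its kernel is exactly $\mathcal{D}(T_0)$. In particular, $\phi(x) = \mathbf{0}$ in $\mathsf{S}'$ is equivalent to $x \in \mathcal{D}(T_0)$.

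Next I would observe that the set $\{\phi t_j\}_{j=1}^d$ spans a subspace $\mathsf{U} \subseteq \mathsf{S}'$ with $\dim \mathsf{U} \le d$, and that $\dim \mathsf{U} = d$ exactly when the spanning set is a basis for $\mathsf{U}$, which in turn is exactly when $\{\phi t_j\}_{j=1}^d$ is linearly independent in $\mathsf{S}'$. Thus the two sides of the biconditional can be rephrased as: $\{\phi t_j\}_{j=1}^d$ is linearly independent in $\mathsf{S}'$ if and only if $\{t_j\}_{j=1}^d$ is linearly independent modulo $\mathcal{D}(T_0)$.

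The remaining step is a direct computation using linearity of $\phi$. For any scalars $\alpha_1,\ldots,\alpha_d \in \mathbb{C}$,
\[
\sum_{j=1}^{d} \alpha_j \phi t_j \;=\; \phi\!\left(\sum_{j=1}^{d} \alpha_j t_j\right),
\]
and this common value equals $\mathbf{0} \in \mathsf{S}'$ if and only if $\sum_{j=1}^d \alpha_j t_j \in \mathcal{D}(T_0)$. Therefore the implication $\sum_j \alpha_j \phi t_j = \mathbf{0} \Rightarrow \alpha_j = 0$ (linear independence in $\mathsf{S}'$) is logically identical to $\sum_j \alpha_j t_j \in \mathcal{D}(T_0) \Rightarrow \alpha_j = 0$ (linear independence modulo $\mathcal{D}(T_0)$), which closes the argument in both directions simultaneously.

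There is no genuine obstacle here: the lemma is essentially the standard correspondence between independent families in a quotient space and families independent modulo the subspace being quotiented out. The only care required is in stating the equivalence cleanly, namely that ``basis for a $d$-dimensional subspace'' for a $d$-element family is the same as ``linearly independent set'' in $\mathsf{S}'$, so that the biconditional actually reduces to the quotient-space identity above.
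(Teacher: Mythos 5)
Your argument is correct and is essentially the paper's own proof: the authors dispose of the lemma with the single observation that $\sum_{j=1}^{d}\alpha_{j}\phi t_{j}=\mathsf{0}$ is equivalent to $\sum_{j=1}^{d}\alpha_{j}t_{j}\in\mathcal{D}(T_{0})$, which is exactly your kernel computation. Your write-up merely makes explicit the surrounding bookkeeping (linearity of $\phi$, and the equivalence of ``basis of a $d$-dimensional subspace'' with linear independence for a $d$-element family), which the paper leaves implicit.
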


\begin{proof}
The equation $\sum_{j=1}^{d}\alpha_{j}\phi t_{j}=$ \textsf{0} is equivalent to
$\sum_{j=1}^{d}\alpha_{j}t_{j}\in\mathcal{D}(T_{0}).$
\end{proof}

The following lemma generalizes the characterization of the domain of the
minimal operator; see (\ref{Minimal Domain}).

\begin{lemma}
\label{minimal domain} $\mathcal{D}(T_{0})=\left\{  x\in\mathcal{D}(T_{1}%
)\mid\lbrack x,y]_{H}=0\text{ }\left(  y\in\mathcal{D}(T_{1})\right)
\right\}  .$
\end{lemma}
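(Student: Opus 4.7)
The plan is to prove the two inclusions separately, with both directions following directly from the adjoint relation $T_1^{*}=T_0$ together with the definition of the symplectic form $[x,y]_H=\langle T_1 x,y\rangle_H-\langle x,T_1 y\rangle_H$. Neither direction should require heavy machinery; the real content of the lemma is that it reinterprets the classical ``domain of the adjoint'' characterization in the language of the form $[\cdot,\cdot]_H$.

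For the forward inclusion $\mathcal{D}(T_0)\subseteq\{x\in\mathcal{D}(T_1)\mid [x,y]_H=0\ (y\in\mathcal{D}(T_1))\}$, I would fix $x\in\mathcal{D}(T_0)$. Since $T_0\subseteq T_1$ by Condition~\ref{eq_def_ind} and the general setup in (\ref{Property 1 of T}), we have $x\in\mathcal{D}(T_1)$ and $T_1 x=T_0 x$. For any $y\in\mathcal{D}(T_1)=\mathcal{D}(T_0^{*})$, the definition of the adjoint gives
\[
\langle T_0 x,y\rangle_H=\langle x,T_0^{*}y\rangle_H=\langle x,T_1 y\rangle_H,
\]
so $[x,y]_H=\langle T_1 x,y\rangle_H-\langle x,T_1 y\rangle_H=0$, as required.

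For the reverse inclusion, I would take $x\in\mathcal{D}(T_1)$ with $[x,y]_H=0$ for every $y\in\mathcal{D}(T_1)$. Unwinding the definition of $[\cdot,\cdot]_H$, this says
\[
\langle T_1 x,y\rangle_H=\langle x,T_1 y\rangle_H\qquad(y\in\mathcal{D}(T_1)).
\]
By the defining property of the adjoint of $T_1$, this is precisely the statement that $x\in\mathcal{D}(T_1^{*})$ with $T_1^{*}x=T_1 x$. Invoking $T_1^{*}=T_0$ from (\ref{Property 1 of T}), we conclude $x\in\mathcal{D}(T_0)$.

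There is no real obstacle; the only subtle point is being careful that the universal quantifier ``for all $y\in\mathcal{D}(T_1)$'' in the hypothesis matches the defining quantifier for membership in $\mathcal{D}(T_1^{*})$, which requires $\mathcal{D}(T_1)$ to be dense in $H$. This density is guaranteed because $T_1^{*}=T_0$ exists as an operator, which forces $\mathcal{D}(T_1)$ to be dense. Once that observation is made, the proof reduces to rewriting the adjoint identity in two lines for each direction.
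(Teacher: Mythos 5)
Your proposal is correct and follows essentially the same route as the paper: the forward inclusion uses $T_0^{*}=T_1$ to show $[x,y]_H$ vanishes, and the reverse inclusion reads the hypothesis as the defining condition for membership in $\mathcal{D}(T_1^{*})=\mathcal{D}(T_0)$ (the paper just makes the conjugation explicit via $[x,y]_H=-\overline{[y,x]}_H$ before invoking the adjoint). Your added remark on the density of $\mathcal{D}(T_1)$ is a fine clarification but is already implicit in the standing assumption $T_1^{*}=T_0$.
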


\begin{proof}
Fix $x\in\mathcal{D}(T_{1})$ and suppose%
\[
\lbrack x,y]_{H}=0\quad(y\in\mathcal{D}(T_{1})).
\]
Since $[x,y]_{H}=-\overline{[y,x]}_{H},$ we see that $\left\langle
T_{1}y,x\right\rangle _{H}=\left\langle y,T_{1}x\right\rangle _{H}$ so
$x\in\mathcal{D}(T_{1}^{\ast})=\mathcal{D}(T_{0}).$ Conversely, let
$x\in\mathcal{D}(T_{0}).$ Since $T_{0}^{\ast}=T_{1}$ and $T_{0}x=T_{1}x,$ we
see that%
\[
\left\langle T_{1}x,y\right\rangle _{H}=\left\langle T_{0}x,y\right\rangle
_{H}=\left\langle x,T_{1}y\right\rangle _{H}\quad(y\in\mathcal{D}(T_{1}));
\]
that is, for each $y\in\mathcal{D}(T_{1}),$
\[
\lbrack x,y]_{H}=\left\langle T_{1}x,y\right\rangle _{H}-\left\langle
x,T_{1}y\right\rangle _{H}=0.
\]

\end{proof}

This result allows the boundary space \textsf{S}$^{^{\prime}}$ to be equipped
with a complex symplectic form.

\begin{definition}
[Boundary Space Symplectic Form]\label{symplectic form}
\begin{equation}
\lbrack\phi x:\phi y]_{\text{\textsf{S}}^{\prime}}:=[x,y]_{H}\quad
(x,y\in\mathcal{D}(T_{1})). \label{Symplectic Form Identification}%
\end{equation}

\end{definition}

Lemma \ref{minimal domain} assures Definition \ref{symplectic form} above is
independent of the choice of representative vectors. Moreover, Lemma
\ref{minimal domain} establishes the non-degeneracy property of Definition
\ref{Complex Symplectic Space Definition}. From the definition of a Lagrangian
subspace, the following extension of Lemma \ref{cosets I} is clear.

\begin{proposition}
\label{Lagrangians I}A collection of cosets $\{\phi t_{j}\}_{j=1}^{d}$ form a
basis for a $d$-dimensional Lagrangian subspace of the boundary space
\textsf{S}$^{\prime}$ if and only if the representative vectors $\{t_{j}%
\}_{j=1}^{d}$ satisfy

\begin{enumerate}
\item[(a)]
\begin{equation}
\sum_{j=1}^{d}\alpha_{j}t_{j}\in\mathcal{D}(T_{0})\Longrightarrow\alpha
_{j}=0\text{ }(j=1,\ldots,d);\text{ } \label{LI modulo minimal domain III}%
\end{equation}
and

\item[(b)]
\begin{equation}
\left[  t_{i},t_{j}\right]  _{H}=0\quad(i,j=1,\ldots,d).
\label{Lagrangian  conditions}%
\end{equation}

\end{enumerate}
\end{proposition}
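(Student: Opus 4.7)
The plan is to deduce the proposition directly from the earlier Lemma \ref{cosets I} (for part (a)) and the definition of the symplectic form on the boundary space \textsf{S}$^{\prime}$ (for part (b)), treating the two implications separately but in parallel.

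First I would handle the forward direction. Assume that $\{\phi t_{j}\}_{j=1}^{d}$ is a basis for a $d$-dimensional Lagrangian subspace \textsf{L} of \textsf{S}$^{\prime}$. Since the cosets form a basis, they are linearly independent in \textsf{S}$^{\prime}$, so Lemma \ref{cosets I} yields condition (\ref{LI modulo minimal domain III}) on the representatives $\{t_{j}\}_{j=1}^{d}$. For condition (\ref{Lagrangian  conditions}), because \textsf{L} is Lagrangian, $[\phi t_{i}:\phi t_{j}]_{\text{\textsf{S}}^{\prime}}=0$ for all $i,j$; invoking the defining identity (\ref{Symplectic Form Identification}) for the boundary-space symplectic form then rewrites this as $[t_{i},t_{j}]_{H}=0$.

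For the reverse direction, suppose (\ref{LI modulo minimal domain III}) and (\ref{Lagrangian  conditions}) hold. Lemma \ref{cosets I} immediately gives that $\{\phi t_{j}\}_{j=1}^{d}$ is a basis for a $d$-dimensional subspace \textsf{L} of \textsf{S}$^{\prime}$; it remains to check that \textsf{L} is Lagrangian. For arbitrary $\mathsf{x},\mathsf{y}\in\textsf{L}$, write $\mathsf{x}=\sum_{i=1}^{d}\alpha_{i}\phi t_{i}$ and $\mathsf{y}=\sum_{j=1}^{d}\beta_{j}\phi t_{j}$. Then by sesquilinearity of $[\cdot:\cdot]_{\textsf{S}^{\prime}}$ together with (\ref{Symplectic Form Identification}),
\[
[\mathsf{x}:\mathsf{y}]_{\textsf{S}^{\prime}}=\sum_{i,j=1}^{d}\alpha_{i}\overline{\beta_{j}}\,[\phi t_{i}:\phi t_{j}]_{\textsf{S}^{\prime}}=\sum_{i,j=1}^{d}\alpha_{i}\overline{\beta_{j}}\,[t_{i},t_{j}]_{H}=0,
\]
so $[\textsf{L}:\textsf{L}]=0$ and \textsf{L} is Lagrangian.

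I do not anticipate a genuine obstacle in this proof; the content is essentially a translation lemma. The only subtle point worth flagging is well-definedness: the symplectic form on cosets is defined via representatives, and one must be sure the computation $[\phi t_{i}:\phi t_{j}]_{\textsf{S}^{\prime}}=[t_{i},t_{j}]_{H}$ does not depend on the choice of $t_{i},t_{j}$. That independence is exactly what Lemma \ref{minimal domain} supplies, since altering a representative by an element of $\mathcal{D}(T_{0})$ changes the form by $0$; I would simply cite this fact rather than redo it. Hence the proof reduces to a clean two-paragraph application of Lemma \ref{cosets I}, Definition \ref{symplectic form}, and sesquilinearity.
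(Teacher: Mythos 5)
Your argument is correct and is exactly the one the paper has in mind: the paper gives no written proof, merely asserting that the proposition is ``clear'' from Lemma~\ref{cosets I} and the definition of a Lagrangian subspace, and your two directions (Lemma~\ref{cosets I} for the linear-independence-modulo-$\mathcal{D}(T_{0})$ condition, plus the identification (\ref{Symplectic Form Identification}) and sesquilinearity for the symmetry conditions) fill in precisely those details. Your flag about well-definedness via Lemma~\ref{minimal domain} matches the paper's own remark following Definition~\ref{symplectic form}.
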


Notice that the properties (\ref{LI modulo minimal domain III}) and
(\ref{Lagrangian conditions}) are identical to those conditions discussed in
Theorem \ref{The GKN Theorem}. Because of their importance in the special case
when $d=\mathrm{def}\left(  T_{0}\right)  ,$ we incorporate these two
properties into the following definition.

\begin{definition}
\label{glazman_set} A collection of vectors $\left\{  t_{j}\mid j=1,\ldots
,\mathrm{def}\left(  T_{0}\right)  \right\}  \subseteq\mathcal{D}(T_{1})$ is
called a GKN set for $T_{0}$ if

\begin{enumerate}
\item[(i)] the set $\left\{  t_{j}\mid j=1,\ldots,\mathrm{def}\left(
T_{0}\right)  \right\}  $ is linearly independent modulo the minimal domain
$\mathcal{D}(T_{0});$ that is to say
\begin{equation}
\text{if }\sum_{j=1}^{\mathrm{def}\left(  T_{0}\right)  }\alpha_{j}t_{j}%
\in\mathcal{D}(T_{0})\text{ then }\alpha_{j}=0\text{ for }j=1,\ldots
,\mathrm{def}\left(  T_{0}\right)  ; \label{LI mod T_0}%
\end{equation}
and

\item[(ii)] the set $\left\{  t_{j}\mid j=1,\ldots,\mathrm{def}\left(
T_{0}\right)  \right\}  $ satisfies the symmetry conditions
\begin{equation}
\lbrack t_{i},t_{j}]_{H}=0\qquad(i,j=1,\ldots,\mathrm{def}\left(
T_{0}\right)  ). \label{gkn_sym_cond1}%
\end{equation}

\end{enumerate}
\end{definition}

\begin{remark}
\label{Remark 2}Observe that if $G$ $\subseteq$ $\mathcal{D}(T_{1})$ is a GKN
set for $T_{0},$ then any non-empty, proper subset $P\subseteq G$ is linearly
independent modulo $\mathcal{D}(T_{0})$ and satisfies the symmetry conditions
in $($\ref{gkn_sym_cond1}$).$ We refer to $P$ as a partial GKN\ set. However,
we note that the only partial GKN sets $P$ that we use in this manuscript are
those which satisfy \textrm{card}$(P)=\mathrm{dim}(W)\leq$ $\mathrm{def}%
(T_{0}),$ where $W$ is a complex finite-dimensional extension space; see
Condition \ref{Dimensionality Condition} in Section \ref{Section Three}.
\end{remark}

We now turn our attention to characterizing complete Lagrangians. A key result
of Everitt and Markus in this setting is that not only do complete Lagrangians
\textsf{L }exist (see \cite[Equations (1.54) and (1.61)]{EM4}) but their
dimensions are precisely that of the deficiency index; that is,
\begin{equation}
\mathrm{dim\ }\text{\textsf{L}}=\mathrm{def}(T_{0})
\label{dimension of complete Lagrangians}%
\end{equation}
(see \cite[Equation (3.9)]{EM4}). Moreover,

\begin{lemma}
\label{New Characterization of complete Lagrangians}With \textrm{def}%
$(T_{0})<\infty,$ a Lagrangian subspace \textsf{L }$\subseteq$ \textsf{S}%
$^{\prime}$ is complete if and only if each of the two conditions hold:

\begin{enumerate}
\item[(i)] \textrm{dim \textsf{L }}$=$ \textrm{\ def}$(T_{0});$

\item[(ii)] \textsf{L }$=\{\phi x\mid\lbrack\phi x:\phi t_{j}%
]_{\text{\textsf{S}}^{\prime}}=0$ $(j=1,2,\ldots,$\textrm{def}$(T_{0}))\}$ for
some GKN set $\{t_{j}\mid j=1,2,\ldots,$\textrm{def}$(T_{0})\}.$ \newline
Moreover, in this case,%
\begin{equation}
\phi^{-1}\text{\textsf{L }}=\{x\in\mathcal{D}(T_{1})\mid\lbrack x,t_{j}%
]_{H}=0\text{ }(j=1,2,\ldots,\mathrm{def}(T_{0}))\}. \label{Lemma New 1}%
\end{equation}

\end{enumerate}

\begin{proof}
Suppose \textsf{L }$\subseteq$ \textsf{S}$^{^{\prime}}$ is complete. Then, by
$($\ref{dimension of complete Lagrangians}$),$ \textrm{dim \textsf{L }}$=$
\textrm{\ def}$(T_{0})$ establishing $($i$).$ By Lemma
\ref{Characterization of Lagrangian subspaces},
\begin{equation}
\text{\textsf{L}}=\{\phi x\mid\lbrack\phi x:\phi y]_{\text{\textsf{S}}%
^{\prime}}=0\text{ }(\phi y\in\text{\textsf{L}})\}. \label{Lemma New 2}%
\end{equation}
Let $\{\phi t_{j}\mid j=1,2,\ldots,$\textrm{def}$(T_{0})\}$ be a basis for
\textsf{L}. Then, by Proposition \ref{Lagrangians I}, $\{t_{j}\mid
j=1,2,\ldots,$\textrm{def}$(T_{0})\}$ is a GKN set for $T_{0}.$ It follows
from $($\ref{Lemma New 2}$),$ that%
\begin{equation}
\text{\textsf{L}}=\{\phi x\mid\lbrack\phi x:\phi t_{j}]_{\text{\textsf{S}%
}^{\prime}}=0\text{ }(j=1,2,\ldots,\mathrm{def}(T_{0}))\}, \label{Lemma New 3}%
\end{equation}
proving $($ii$).$ Lastly, using the identification in $($%
\ref{Symplectic Form Identification}$)$ along with the identity in
$($\ref{Lemma New 3}$),$ $($\ref{Lemma New 1}$)$ is clear$.$ \medskip\newline
Conversely, suppose $($i$)$ and $($ii$)$ hold. It is straightforward to show
that \textsf{L }is a subspace of \textsf{S}$^{^{\prime}}.$ Clearly
$($\ref{Lemma New 1}$)$ follows from $($ii$).$ Moreover, since $\{t_{j}\mid
j=1,2,\ldots,$\textrm{def}$(T_{0})\}$ is a GKN set for $T_{0},$ we see that%
\[
\lbrack\phi t_{i}:\phi t_{j}]_{\text{\textsf{S}}^{\prime}}=[t_{i},t_{j}%
]_{H}=0.
\]
It follows by taking linear combinations that \textsf{L }is Lagrangian.
Finally, from $($\ref{dimension of complete Lagrangians}$),$ we see that
\textsf{L }is complete.
\end{proof}
\end{lemma}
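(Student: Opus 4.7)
The plan is to treat the two implications separately and to lean on Lemma~\ref{Characterization of Lagrangian subspaces}, Proposition~\ref{Lagrangians I}, and the Everitt--Markus dimension identity $(\ref{dimension of complete Lagrangians})$ at the crucial steps, with the identification $(\ref{Symplectic Form Identification})$ handling the passage between $[\cdot : \cdot]_{\text{\textsf{S}}^{\prime}}$ on the boundary space and $[\cdot,\cdot]_{H}$ on $\mathcal{D}(T_{1})$.

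For the forward direction I would first read off (i) directly from $(\ref{dimension of complete Lagrangians})$. Then, picking any basis $\{\phi t_{j}\}_{j=1}^{\mathrm{def}(T_{0})}$ of \textsf{L}, Proposition~\ref{Lagrangians I} produces the GKN set $\{t_{j}\}$ at the representative level. Lemma~\ref{Characterization of Lagrangian subspaces} rewrites \textsf{L} as the set of $\phi x$ annihilating all of \textsf{L}, and sesquilinearity of $[\cdot : \cdot]_{\text{\textsf{S}}^{\prime}}$ lets me replace the test condition ``for all $\phi y \in$~\textsf{L}'' by the finite condition ``for all basis vectors $\phi t_{j}$'', yielding (ii). The explicit description $(\ref{Lemma New 1})$ of $\phi^{-1}\text{\textsf{L}}$ is then just a translation of (ii) through $(\ref{Symplectic Form Identification})$.

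For the converse, I would begin by noting that the equations defining \textsf{L} in (ii) are linear in the first slot, so \textsf{L} is automatically a subspace of $\text{\textsf{S}}^{\prime}$ and $(\ref{Lemma New 1})$ is immediate from $(\ref{Symplectic Form Identification})$. Next I check that each $\phi t_{j}$ itself belongs to \textsf{L}: the GKN symmetry $(\ref{gkn_sym_cond1})$ gives $[\phi t_{i}:\phi t_{j}]_{\text{\textsf{S}}^{\prime}}=[t_{i},t_{j}]_{H}=0$, so $\mathrm{span}\{\phi t_{j}\}\subseteq\text{\textsf{L}}$. Lemma~\ref{cosets I} promotes the linear independence of $\{t_{j}\}$ modulo $\mathcal{D}(T_{0})$ to linear independence of the cosets $\{\phi t_{j}\}$, so this span has dimension $\mathrm{def}(T_{0})$. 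Combining with the dimension hypothesis (i) forces $\text{\textsf{L}}=\mathrm{span}\{\phi t_{j}\}$, so \textsf{L} is actually Lagrangian. Completeness then follows in one line from Lemma~\ref{Characterization of Lagrangian subspaces}: any $\phi x\in\text{\textsf{S}}^{\prime}$ satisfying $[\phi x:\phi y]_{\text{\textsf{S}}^{\prime}}=0$ for every $\phi y \in$~\textsf{L} is, in particular, annihilated by each $\phi t_{j}$, and hence lies in \textsf{L} by (ii).

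The main bookkeeping point in the converse --- and the only place where something could slip --- is the identification $\text{\textsf{L}}=\mathrm{span}\{\phi t_{j}\}$. Without invoking both Lemma~\ref{cosets I} (to get the correct dimension of the span) and the hypothesis (i) (to match this against $\dim\text{\textsf{L}}$), one would only have shown that \textsf{L} \emph{contains} a Lagrangian of dimension $\mathrm{def}(T_{0})$, which is not by itself enough to deduce that \textsf{L} is Lagrangian. Everything else reduces to direct applications of earlier results in the section.
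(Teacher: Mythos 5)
Your proof is correct, and your forward direction coincides with the paper's argument step for step: read (i) off from $(\ref{dimension of complete Lagrangians})$, take a basis $\{\phi t_{j}\}$ of \textsf{L}, invoke Proposition \ref{Lagrangians I} to get the GKN set, and cut the test condition in Lemma \ref{Characterization of Lagrangian subspaces} down to the basis vectors by sesquilinearity. The converse is where you genuinely diverge, and for the better. The paper establishes that \textsf{L} is Lagrangian with the single phrase ``it follows by taking linear combinations,'' and then asserts completeness by citing $(\ref{dimension of complete Lagrangians})$ --- which, as stated in the text, gives only the implication ``complete $\Rightarrow$ $\dim \text{\textsf{L}} = \mathrm{def}(T_{0})$,'' not its converse. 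You instead pin down $\text{\textsf{L}}=\mathrm{span}\{\phi t_{j}\}$ by combining Lemma \ref{cosets I} (the cosets are independent, so the span has dimension $\mathrm{def}(T_{0})$) with hypothesis (i); Lagrangian-ness is then immediate, and you verify completeness directly from Definition \ref{Complete Lagrangian Definition}: any $\phi x$ annihilating all of \textsf{L} annihilates each $\phi t_{j}\in\text{\textsf{L}}$ and so lies in \textsf{L} by (ii). This makes the converse self-contained and correctly isolates the one nontrivial point --- upgrading $\text{\textsf{L}}\supseteq\mathrm{span}\{\phi t_{j}\}$ to equality --- that the paper's terseness glosses over. (One could add that hypothesis (i) is actually forced in the converse: in the non-degenerate space \textsf{S}$^{\prime}$ of dimension $2\,\mathrm{def}(T_{0})$, the annihilator of $\mathrm{def}(T_{0})$ independent cosets has dimension exactly $\mathrm{def}(T_{0})$; but using (i) as a stated hypothesis, as you do, is the intended reading.)
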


The authors in \cite[Theorem 1.14 and Remark 1.15]{EM4} establish the
following characterization of self-adjoint extensions of $T_{0}$ in terms of
complete Lagrangian subspaces \textsf{L }of \textsf{S}$^{^{\prime}}.$

\begin{theorem}
[The Finite-Dimensional GKN-EM Theorem]\label{The GKN-EM Theorem I}Let $T_{0}$
and $T_{1}$ be, respectively, the minimal and maximal operators as defined in
Section \ref{Section Two} and let \textsf{S}$^{^{\prime}}$ be given by
$($\ref{S'}$)$. There exists a one-to-one correspondence between the set
$\{T\}$ of all self-adjoint extensions of $T_{0}$ and the set $\{$%
\textsf{L}$\}$ of all complete Lagrangians \textsf{L }$\subseteq$
\textsf{S}$^{^{\prime}}.$ More specifically,

\begin{enumerate}
\item[(a)] if $T$ is a self-adjoint operator with $T_{0}\subseteq T\subseteq
T_{1}$, then
\begin{equation}
\text{\textsf{L}}:=\{\phi x\in\text{\textsf{S}}^{^{\prime}}\mid x\in
\mathcal{D}(T)\} \label{GKN-EM I - 1}%
\end{equation}
is a complete Lagrangian subspace of \textsf{S}$^{^{\prime}}$ of dimension
\textrm{def}$(T_{0}).$ Moreover, $\phi^{-1}$\textsf{L }$=\mathcal{D}(T).$

\item[(b)] If \textsf{L }is a complete Lagrangian subspace of \textsf{S}%
$^{^{\prime}},$ then \textsf{L }has dimension \textrm{def}$(T_{0}).$ Define%
\[
\mathcal{D}(T)=\{x\in\mathcal{D}(T_{1})\mid\phi x\in\text{ \textsf{L}}\}.
\]
Then $T:\mathcal{D}(T)\subset H\rightarrow H$ given by
\begin{align*}
Tx  &  =T_{1}x\\
x  &  \in\mathcal{D}(T)
\end{align*}
is a self-adjoint operator satisfying $T_{0}\subseteq T\subseteq T_{1}.$
Moreover, $\phi^{-1}$ \textsf{L} $=\mathcal{D}(T).$
\end{enumerate}
\end{theorem}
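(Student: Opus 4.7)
The plan is to establish the two halves of the correspondence separately and then observe they are inverses via the identity $\phi^{-1}\textsf{L} = \mathcal{D}(T)$ appearing in both (a) and (b). The central engines will be Lemma~\ref{minimal domain} (which identifies $\mathcal{D}(T_0)$ as the annihilator of $[\cdot,\cdot]_H$ on $\mathcal{D}(T_1)$), Lemma~\ref{Characterization of Lagrangian subspaces} (which recasts completeness as equality with the symplectic annihilator), and the cited dimension formula~(\ref{dimension of complete Lagrangians}).

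For part~(a), I start with a self-adjoint $T$ with $T_0 \subseteq T \subseteq T_1$. Since $\mathcal{D}(T_0) \subseteq \mathcal{D}(T)$, the set $\textsf{L}$ is a well-defined subspace of $\textsf{S}^\prime$, and the computation
\[
[\phi x:\phi y]_{\textsf{S}^\prime} = \langle T_1 x,y\rangle_H - \langle x, T_1 y\rangle_H = \langle Tx,y\rangle_H - \langle x, Ty\rangle_H = 0 \quad (x,y\in \mathcal{D}(T))
\]
shows that $\textsf{L}$ is Lagrangian. To verify completeness, suppose $\phi x \in \textsf{S}^\prime$ satisfies $[\phi x:\phi y]_{\textsf{S}^\prime} = 0$ for every $\phi y \in \textsf{L}$; unwinding the definition gives $\langle T_1 x, y\rangle_H = \langle x, Ty\rangle_H$ for all $y\in\mathcal{D}(T)$, so $x \in \mathcal{D}(T^\ast) = \mathcal{D}(T)$ by self-adjointness, hence $\phi x \in \textsf{L}$. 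The dimension statement follows from~(\ref{dimension of complete Lagrangians}). Finally, $\phi^{-1}\textsf{L}=\mathcal{D}(T)$ because the preimage of $\textsf{L}$ adds elements of $\mathcal{D}(T_0)$, which are already contained in $\mathcal{D}(T)$.

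For part~(b), I start with a complete Lagrangian $\textsf{L}$ and define $T$ on $\mathcal{D}(T):=\phi^{-1}\textsf{L}$ by $Tx=T_1x$. The inclusion $\mathcal{D}(T_0)\subseteq \mathcal{D}(T)$ is immediate since $\phi(\mathcal{D}(T_0))=\textsf{0}\in \textsf{L}$, and $T\subseteq T_1$ is tautological. Symmetry follows because for $x,y\in\mathcal{D}(T)$ we have $[x,y]_H = [\phi x:\phi y]_{\textsf{S}^\prime}=0$, using that $\textsf{L}$ is Lagrangian. The decisive step is proving $\mathcal{D}(T^\ast)\subseteq \mathcal{D}(T)$: given $x\in\mathcal{D}(T^\ast)$, the inclusion $T_0\subseteq T$ yields $T^\ast\subseteq T_0^\ast = T_1$, so $x\in\mathcal{D}(T_1)$ with $T^\ast x = T_1 x$; then for every $y\in\mathcal{D}(T)$,
\[
[x,y]_H = \langle T_1 x,y\rangle_H - \langle x, T_1 y\rangle_H = \langle T^\ast x,y\rangle_H - \langle x, Ty\rangle_H = 0,
\]
so $[\phi x:\phi y]_{\textsf{S}^\prime}=0$ for all $\phi y\in \textsf{L}$, and completeness of $\textsf{L}$ forces $\phi x \in \textsf{L}$, i.e.\ $x\in\mathcal{D}(T)$. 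This is exactly where the \emph{completeness} (as opposed to mere Lagrangian property) of $\textsf{L}$ is indispensable, and I expect it to be the main subtlety; without it one obtains only the containment $T\subseteq T^\ast$.

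Once both directions are in place, the correspondence $T\mapsto \textsf{L}$ is injective (two self-adjoint extensions with the same $\textsf{L}$ have the same domain by $\mathcal{D}(T)=\phi^{-1}\textsf{L}$) and surjective (the operator produced in~(b) has $\textsf{L}$ as its associated complete Lagrangian, again by $\mathcal{D}(T)=\phi^{-1}\textsf{L}$). The two identities $\phi^{-1}\textsf{L}=\mathcal{D}(T)$ appearing in (a) and (b) are precisely the statements that the two assignments are mutually inverse, completing the bijection.
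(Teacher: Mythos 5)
Your proof is correct, but note that the paper itself gives no proof of this theorem: it is quoted verbatim from Everitt--Markus \cite[Theorem 1.14 and Remark 1.15]{EM4}, so there is nothing internal to compare against. What you have supplied is a clean, self-contained argument of the standard ``adjoint versus symplectic annihilator'' type: in part (a) you identify the condition $[\phi x:\phi y]_{\textsf{S}'}=0$ for all $\phi y\in\textsf{L}$ with membership in $\mathcal{D}(T^{\ast})=\mathcal{D}(T)$, and in part (b) you run the same identification in reverse, using $T^{\ast}\subseteq T_{0}^{\ast}=T_{1}$ to place $x\in\mathcal{D}(T^{\ast})$ inside $\mathcal{D}(T_{1})$ before invoking completeness of $\textsf{L}$ to conclude $T^{\ast}\subseteq T$. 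You correctly isolate completeness as the decisive hypothesis in (b); mere Lagrangian-ness only yields symmetry. Two small remarks. First, in (b) you should say explicitly that $T$ is densely defined (immediate from $\mathcal{D}(T_{0})\subseteq\mathcal{D}(T)$), since otherwise $T^{\ast}$ is not defined. Second, your dimension claims lean on the cited formula (\ref{dimension of complete Lagrangians}), which in the paper is itself imported from \cite{EM4}; if you want the argument fully independent of that source, the dimension in part (a) follows directly from the second von Neumann formula, since $\mathcal{D}(T)=\mathcal{D}(T_{0})\dotplus\{x_{+}+Vx_{+}\mid x_{+}\in X_{+}\}$ gives $\dim\textsf{L}=\dim X_{+}=\mathrm{def}(T_{0})$, and the dimension in part (b) then follows because every complete Lagrangian arises this way once the bijection is established. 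With those touches your write-up would stand as a complete replacement for the external citation.
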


Combining Theorem \ref{The GKN-EM Theorem I} with Lemmas
\ref{Characterization of Lagrangian subspaces} and
\ref{New Characterization of complete Lagrangians}, we are now in position to
state and prove an important consequence of Theorem \ref{The GKN-EM Theorem I}
which, for our purposes, is key to the results developed in the next section
and in the examples of Section \ref{Examples}. We note that the next theorem
is an \textit{exact} generalization of the GKN theorem stated in Theorem
\ref{The GKN Theorem}.

\begin{theorem}
[The Finite-Dimensional Symplectic GKN-EM Theorem]\label{The GKN-EM Theorem}%
Suppose $T_{0}$ and $T_{1}$ are linear operators satisfying the conditions set
forth in Section \ref{Section Two} and $[\cdot,\cdot]_{H}$ is the symplectic
form defined in Definition \ref{symplectic_form}. In particular, we assume
$T_{0}$ has equal and finite deficiency indices denoted by $\mathrm{def}%
\left(  T_{0}\right)  $.

\begin{enumerate}
\item[(i)] If the operator $T:\mathcal{D}(T)$ $\subseteq H\rightarrow H$ is
self-adjoint and satisfies
\[
T_{0}\subseteq T\subseteq T_{1}%
\]
then there exists a GKN set $\left\{  t_{j}\mid j=1,\ldots,\mathrm{def}\left(
T_{0}\right)  \right\}  \subseteq\mathcal{D}(T_{1})$ of $T_{0}$ such that
\begin{equation}
\mathcal{D}(T)=\{x\in\mathcal{D}(T_{1})\ \mid\lbrack x,t_{j}]_{H}=0\text{
}(j=1,\ldots,\mathrm{def}\left(  T_{0}\right)  )\}. \label{3.2.0}%
\end{equation}

\item[(ii)] If $\left\{  t_{j}\mid j=1,\ldots,\mathrm{def}\left(
T_{0}\right)  \right\}  \subseteq\mathcal{D}(T_{1})$ is a GKN set for $T_{0}$
then the operator $T:\mathcal{D}(T)$ $\subseteq H\rightarrow H$ given by
\begin{align}
Tx  &  =T_{1}x\label{gknem1}\\
x\in\mathcal{D}(T)  &  =\{x\in\mathcal{D}(T_{1})\mid\lbrack x,t_{j}%
]_{H}=0\text{ }(j=1,\ldots,\text{\textrm{def}}(T_{0}))\} \label{gknem2}%
\end{align}
is self-adjoint and satisfies
\[
T_{0}\subseteq T\subseteq T_{1}.
\]

\end{enumerate}

\begin{proof}
\underline{$($i$)$} Suppose $T:\mathcal{D}(T)$ $\subseteq H\rightarrow H$ is
self-adjoint and satisfies $T_{0}\subseteq T\subseteq T_{1}$. By Theorem
\ref{The GKN-EM Theorem I}$,$
\begin{equation}
\text{\textsf{L}}=\{\phi x\in\text{\textsf{S}}^{^{\prime}}\mid x\in
\mathcal{D}(T)\} \label{3.2.1}%
\end{equation}
is a complete Lagrangian subspace of \textsf{S}$^{^{\prime}}$ of dimension
$\mathrm{def}\left(  T_{0}\right)  $ from which it follows that
\begin{equation}
\phi^{-1}\text{\textsf{L }}=\mathcal{D}(T). \label{3.2.2}%
\end{equation}
Moreover, by Lemma \ref{New Characterization of complete Lagrangians}, there
exists a GKN set $\{t_{j}\mid j=1,2,\ldots,$\textrm{def}$(T_{0})\}$ for
$T_{0}$ such that%
\[
\text{\textsf{L}}=\{\phi x\mid\lbrack\phi x:\phi t_{j}]_{\text{\textsf{S}%
}^{\prime}}=0\text{ }(j=1,2,\ldots,\mathrm{def}(T_{0}))\}.
\]
and
\begin{equation}
\phi^{-1}\text{\textsf{L }}=\{x\in\mathcal{D}(T_{1})\mid\lbrack x,t_{j}%
]_{H}=0\text{ }(j=1,2,\ldots,\mathrm{def}(T_{0}))\}. \label{3.2.3}%
\end{equation}
Comparing $($\ref{3.2.2}$)$ with $($\ref{3.2.3}$),$ we obtain $($%
\ref{3.2.0}$)$. \medskip\newline\underline{$($ii$)$} Suppose $\{t_{j}\mid
j=1,2,\ldots,$\textrm{def}$(T_{0})\}$ is a GKN set for $T_{0}.$ Let%
\begin{equation}
\text{\textsf{L}}=\{\phi x\mid\lbrack\phi x:\phi t_{j}]_{\text{\textsf{S}%
}^{\prime}}=0\text{ }(j=1,2,\ldots,\mathrm{def}(T_{0}))\}. \label{3.2.4}%
\end{equation}
By Lemma \ref{New Characterization of complete Lagrangians}, \textsf{L} is a
complete Lagrangian subspace of \textsf{S}$^{^{\prime}}$ of dimension
$\mathrm{def}(T_{0}).$ Define $T$ as in $($\ref{gknem1}$)$ and $($%
\ref{gknem2}$).$ Then, from $($\ref{gknem2}$)$ and $($\ref{3.2.4}$),$ we see
that%
\[
\text{\textsf{L}}=\{\phi x\mid x\in\mathcal{D}(T)\}
\]
so that%
\[
\mathcal{D}(T)=\phi^{-1}\text{\textsf{L}}=\{x\in\mathcal{D}(T_{1})\mid\phi
x\in\text{\textsf{L}}\}.
\]
By Theorem \ref{The GKN-EM Theorem I}, $T$ is self-adjoint and $T_{0}\subseteq
T\subseteq T_{1}.$
\end{proof}
\end{theorem}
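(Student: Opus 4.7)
The plan is to treat this theorem as essentially a translation of the abstract complete-Lagrangian statement of Theorem~\ref{The GKN-EM Theorem I} into the concrete language of GKN sets, using Lemma~\ref{New Characterization of complete Lagrangians} as the dictionary. Both directions reduce to packaging the correspondence $\textsf{L} \leftrightarrow \{t_j\}$ together with the identity $\phi^{-1}\textsf{L} = \mathcal{D}(T)$.

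For part (i), I would start with a self-adjoint operator $T$ satisfying $T_0 \subseteq T \subseteq T_1$. Theorem~\ref{The GKN-EM Theorem I}(a) produces a complete Lagrangian $\textsf{L} = \{\phi x \mid x \in \mathcal{D}(T)\}$ of dimension $\mathrm{def}(T_0)$ with the crucial identity $\phi^{-1}\textsf{L} = \mathcal{D}(T)$. Then Lemma~\ref{New Characterization of complete Lagrangians} applied to $\textsf{L}$ yields a GKN set $\{t_j \mid j=1,\ldots,\mathrm{def}(T_0)\}$ for $T_0$ together with the characterization
\[
\phi^{-1}\textsf{L} = \{x \in \mathcal{D}(T_1) \mid [x,t_j]_H = 0,\ j=1,\ldots,\mathrm{def}(T_0)\}.
\]
Equating the two expressions for $\phi^{-1}\textsf{L}$ gives exactly the desired formula for $\mathcal{D}(T)$ in~(\ref{3.2.0}).

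For part (ii), I would go in reverse. Given a GKN set $\{t_j\}$, I would define
\[
\textsf{L} := \{\phi x \in \textsf{S}' \mid [\phi x : \phi t_j]_{\textsf{S}'} = 0,\ j=1,\ldots,\mathrm{def}(T_0)\}.
\]
Lemma~\ref{New Characterization of complete Lagrangians} (the converse direction) guarantees that $\textsf{L}$ is a complete Lagrangian subspace of $\textsf{S}'$ of dimension $\mathrm{def}(T_0)$, and that $\phi^{-1}\textsf{L}$ matches the domain in~(\ref{gknem2}) via the identification $[\phi x : \phi t_j]_{\textsf{S}'} = [x,t_j]_H$. Then Theorem~\ref{The GKN-EM Theorem I}(b) applied to this $\textsf{L}$ yields that the operator defined by $Tx = T_1 x$ on $\mathcal{D}(T) = \phi^{-1}\textsf{L}$ is self-adjoint and lies between $T_0$ and $T_1$, which is precisely what~(\ref{gknem1})--(\ref{gknem2}) assert.

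There is really no hard step: the substantive content is already contained in the Everitt--Markus correspondence (Theorem~\ref{The GKN-EM Theorem I}) and in Lemma~\ref{New Characterization of complete Lagrangians}, which together say that complete Lagrangians and GKN sets describe the same objects. The only care required is bookkeeping the symplectic form under $\phi$ via~(\ref{Symplectic Form Identification}), so that the condition $[\phi x : \phi t_j]_{\textsf{S}'} = 0$ lifts cleanly to $[x,t_j]_H = 0$ on representative vectors; this is well-defined by Lemma~\ref{minimal domain}. The whole proof should therefore consist of two short paragraphs, each invoking the previous results and matching up the two descriptions of $\phi^{-1}\textsf{L}$.
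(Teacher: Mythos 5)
Your proposal is correct and follows essentially the same route as the paper's own proof: both directions invoke Theorem~\ref{The GKN-EM Theorem I} together with Lemma~\ref{New Characterization of complete Lagrangians} to pass between the complete Lagrangian $\textsf{L}$ and the GKN set $\{t_j\}$, matching the two descriptions of $\phi^{-1}\textsf{L}$ via the identification $[\phi x:\phi t_j]_{\textsf{S}'}=[x,t_j]_H$. No gaps.
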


\begin{remark}
\label{Remark 3}In the case that $H=L^{2}(I;w)$ and $T_{0}$ and $T_{1}$ are,
respectively, the minimal and maximal operators $L_{0}$ and $L_{1},$ generated
by the ordinary differential expression $($\ref{ode}$),$ Theorem
\ref{The GKN-EM Theorem} is \underline{identical} to the classical GKN theorem
given in Theorem \ref{The GKN Theorem}. Again, it is remarkable that the GKN
theorem extends verbatim to a general Hilbert space with an arbitrary closed
symmetric operator having equal deficiency indices. As in the classical GKN
setting, we also call the conditions
\[
\lbrack x,t_{j}]_{H}=0\text{ for }j=1,\ldots,\mathrm{def}\left(  T_{0}\right)
\]
`boundary conditions'. Lastly, we note that, as in Remark \ref{Remark -1}, if
\textrm{def}$(T_{0})=0,$ there are no such boundary conditions and, in this
case, the only self-adjoint extension of $T_{0}$ is the maximal operator
$T_{1}$ $(=T_{0}).$
\end{remark}

\begin{remark}
Everitt and Markus discuss other important applications of their results to
ordinary and partial differential operators. We refer the reader to Sections
2.1, 2.2 and 4.2 in \cite{EM4}. They outline the argument given above in
Theorem \ref{The GKN-EM Theorem} for Sturm-Liouville problems $($see
\cite[Section 2, equations $($2.23$),$ $($2.24$),$ and $($2.25$)$]{EM4}$)$ as
well as for general Shin-Zettl quasi-differential operators $($see
\cite[Section 4; in particular equations $($4.57$)$--$($4.61$)$]{EM4}$)$. The
authors are certain that the most general situation $($when $T_{0}$ has finite
and equal deficiency indices$)$, which we prove in Theorem
\ref{The GKN-EM Theorem}, was known to Everitt and Markus but we cannot find
an exact reference in their joint work.
\end{remark}

\section{Maximal and Minimal Operators in $H\oplus W$\label{Section Three}}

We remind the reader that $T_{0}:\mathcal{D}(T_{0})\subseteq H\rightarrow H$
is a closed, symmetric operator with equal, finite deficiency indices
\textrm{def}$(T_{0})$ and adjoint operator $T_{1}$ satisfying $T_{1}^{\ast
}=T_{0}\subseteq T_{0}^{\ast}=T_{1}.$ In this section, we identify a
\textit{family} of minimal operators $\widehat{T}_{0}:\mathcal{D(}\widehat
{T}_{0})\subseteq H\oplus W\rightarrow H\oplus W$ and an associated family of
\textit{maximal} operators $\widehat{T}_{1}:\mathcal{D(}\widehat{T}%
_{1})\subseteq H\oplus W\rightarrow H\oplus W$ in the extended space $H\oplus
W$ generated by, respectively, the minimal operator $T_{0}$ and the maximal
operator $T_{1}$ in the base space $H.$ We show that each $\widehat{T}_{0}$ is
a closed, symmetric operator in $H\oplus W$ with equal deficiency indices and
\textrm{def}$(\widehat{T}_{0})=\mathrm{def}(T_{0})$. Moreover, the operators
$\widehat{T}_{0}$ and $\widehat{T}_{1}$ are adjoints of each other just as in
the classical case with $T_{0}$ and $T_{1}.$

A fundamental assumption in our development of the maximal and minimal
operators in $H\oplus W$ is the following dimensionality requirement for the
extension space:

\begin{condition}
\label{Dimensionality Condition}$\dim(W)\leqslant\mathrm{def}\left(
T_{0}\right)  .$
\end{condition}

Fix a partial GKN set
\begin{equation}
\{t_{j}\mid j=1,\ldots,\dim(W)\}\subseteq\mathcal{D}(T_{1}); \label{gkn}%
\end{equation}
recall, from Remark \ref{Remark 2} and Condition
\ref{Dimensionality Condition}, that this set exists and satisfies the two
conditions
\begin{equation}
\sum_{j=1}^{\dim(W)}\alpha_{j}t_{j}\in\mathcal{D}(T_{0})\Longrightarrow
\alpha_{j}=0\qquad(j=1,\ldots,\dim(W)) \label{linear independence}%
\end{equation}
and
\begin{equation}
\lbrack t_{i},t_{j}]_{H}=0\qquad(i,j=1,\ldots,\dim(W)). \label{gkn symmetry}%
\end{equation}
It is clear that the maximal operator $T_{1}$ in the base space is symmetric
on
\begin{equation}
\Delta_{0}:=\mathcal{D}(T_{0})+\mathrm{span}\{t_{j}\mid j=1,\ldots
,\dim(W)\subseteq\mathcal{D}(T_{1}). \label{Delta_0 set}%
\end{equation}
Now let
\[
\left\{  \xi_{j}\mid j=1,\ldots,\dim(W)\right\}  \subseteq W
\]
be an orthonormal basis of $W$ and define $\Psi:\Delta_{0}\rightarrow W$ by
\begin{align}
\Psi\left(  t_{j}\right)   &  =\xi_{j}\mathit{\ }\text{\textit{\quad}%
}(j=1,\ldots,\dim(W))\nonumber\\
\Psi\left(  s\right)   &  =0\mathit{\ \quad}(s\in\mathcal{D}(T_{0})).
\label{Psi zero on minimal domain}%
\end{align}
and extend $\Psi$ to $\Delta_{0}$; that is to say
\[
\Psi\left(  s+\sum_{j=1}^{\dim(W)}\alpha_{j}t_{j}\right)  =\sum_{j=1}%
^{\dim(W)}\alpha_{j}\xi_{j}.
\]
Note the key fact that $\Psi$ maps the partial GKN set $\{t_{j}\mid
j=1,\ldots,\dim(W)\}$ onto $W.$

Lastly, fix an arbitrary self-adjoint operator $B:W\rightarrow W$ in the
extension space $W$. With these definitions and conditions in place, we are
now in position to define a \textit{minimal operator} $\widehat{T_{0}}$ in
$H\oplus W$ generated by $T_{0}$.

\begin{definition}
\label{min_op}The minimal operator $\widehat{T}_{0}:\mathcal{D}\left(
\widehat{T}_{0}\right)  \subseteq H\oplus W\rightarrow H\oplus W$ is defined
to be%
\begin{align}
\widehat{T}_{0}(x,a)  &  =(T_{1}x,Ba)\label{T_0 form}\\
(x,a)\in\mathcal{D(}\widehat{T}_{0})  &  :=\left\{  (x,\Psi x)\mid x\in
\Delta_{0}\right\}  . \label{T_0 domain}%
\end{align}

\end{definition}

At this point, it is unclear why we call $\widehat{T}_{0}$ the
\textit{minimal} operator generated by $T_{0};$ we will justify this
terminology in Remark \ref{Remark 4}. In Theorem\textit{\ }\ref{min_op_closed}
below we show that the minimal operator $\widehat{T}_{0}$ is, in fact, a
densely defined operator which is both closed and symmetric. Moreover, in
Theorem \ref{adjoint_lemma}, where it is shown that $(\widehat{T}_{0})^{\ast
}=\widehat{T}_{1},$ we introduce the important linear transformation
$\Omega:\mathcal{D}(T_{1})\ \rightarrow W$ defined by
\begin{equation}
\Omega x:=\sum_{j=1}^{\dim(W)}[x,t_{j}]_{H}\xi_{j}\quad(x\in\mathcal{D}%
(T_{1})). \label{Omega map}%
\end{equation}

Observe, by definition of the partial GKN set $\{t_{j}\mid j=1,\ldots,\dim
W\}$ and Lemma \ref{minimal domain}, that
\begin{equation}
\Omega x=0\quad(x\in\Delta_{0}). \label{Property of Omega}%
\end{equation}
With this transformation $\Omega,$ we are now ready to introduce the maximal
operator $\widehat{T}_{1}$.

\begin{definition}
\label{max_op} The maximal operator $\widehat{T}_{1}:\mathcal{D}\left(
\widehat{T}_{1}\right)  \subseteq H\oplus W\rightarrow H\oplus W$ is defined
by%
\begin{align}
\widehat{T}_{1}(x,a)  &  =(T_{1}x,Ba-\Omega x)\label{T_1 form}\\
(x,a)\in\mathcal{D(}\widehat{T}_{1})  &  :=\left\{  (x,a)\mid x\in
\mathcal{D}(T_{1}),a\in W\right\}  . \label{T_1 domain}%
\end{align}

\end{definition}

Note that if $(x,\Psi x)\in\mathcal{D}(\widehat{T}_{0}),$ then $(x,\Psi
x)\in\mathcal{D}(\widehat{T}_{1}).$ Moreover, in this case, $\Omega x=0$ by
(\ref{Property of Omega}) so
\[
\widehat{T}_{1}(x,\Psi x)=(T_{1}x,B\Psi x-\Omega x)=(T_{1}x,B\Psi
x)=\widehat{T}_{0}(x,\Psi x);
\]
that is%
\begin{equation}
\widehat{T}_{0}\subseteq\widehat{T}_{1}. \label{T_0 contained in T_1}%
\end{equation}

\begin{remark}
\label{Remark 4}The term `maximal' is appropriate; indeed, observe that
$\mathcal{D(}\widehat{T}_{1})$ is the largest linear manifold in $H\oplus W$
on which an operator representation of $T_{1}$ makes sense. Moreover, once we
establish the fact that $(\widehat{T}_{0})^{\ast}=\widehat{T}_{1},$ we see
that the term `minimal' is appropriate for the operator $\widehat{T}_{0}. $
\end{remark}

\begin{proposition}
\label{closure_prop} The extension $J:\mathcal{D}(J)\subseteq H\rightarrow H$
of the \textit{minimal operator} $T_{0},$ defined by%
\begin{align*}
Jx  &  :=T_{1}x\\
x\in\mathcal{D}(J)  &  :=\Delta_{0},
\end{align*}
is a closed symmetric operator.
\end{proposition}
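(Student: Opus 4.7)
The plan is to establish symmetry and closedness of $J$ separately, with symmetry being essentially immediate from the GKN symmetry conditions and closedness requiring the decomposition structure from the first von Neumann formula.

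For symmetry, I would take arbitrary $x,y \in \Delta_0$ and write them as $x = x^0 + \sum_{j=1}^{\dim(W)} \alpha_j t_j$ and $y = y^0 + \sum_{j=1}^{\dim(W)} \beta_j t_j$ with $x^0, y^0 \in \mathcal{D}(T_0)$. Expanding $[x,y]_H = \langle T_1 x, y\rangle_H - \langle x, T_1 y\rangle_H$ by sesquilinearity yields four types of terms: one involving $[x^0, y^0]_H$, two mixed terms of the form $[x^0, t_j]_H$ and $[t_j, y^0]_H$, and the pure combinations $[t_i, t_j]_H$. Lemma \ref{minimal domain} kills every term containing a factor from $\mathcal{D}(T_0)$, while the GKN symmetry condition (\ref{gkn symmetry}) kills the remaining terms $[t_i, t_j]_H = 0$. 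Thus $[x,y]_H = 0$, proving symmetry of $J$.

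For closedness, suppose $x_n \in \Delta_0$ with $x_n \to x$ in $H$ and $Jx_n = T_1 x_n \to z$ in $H$. Since $T_1 = T_0^\ast$ is automatically closed, we obtain $x \in \mathcal{D}(T_1)$ and $T_1 x = z$; moreover $\|x_n - x\|_H^\ast \to 0$ in the graph norm (\ref{Graph Norm}). It therefore suffices to show that $\Delta_0$ is closed in the Hilbert space $(\mathcal{D}(T_1), \langle \cdot,\cdot\rangle_H^\ast)$. Using the first von Neumann formula (Theorem \ref{First von Neumann Formula}) together with (\ref{Orthogonality and Graph Norm}), decompose each $t_j = t_j^0 + t_j^\perp$ with $t_j^0 \in \mathcal{D}(T_0)$ and $t_j^\perp \in X_+ \oplus X_-$, orthogonal in the graph inner product. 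The linear independence condition (\ref{linear independence}) forces the vectors $\{t_j^\perp\}_{j=1}^{\dim(W)}$ to be linearly independent in $X_+ \oplus X_-$, since any nontrivial relation $\sum \alpha_j t_j^\perp = 0$ would place $\sum \alpha_j t_j \in \mathcal{D}(T_0)$. Consequently $\mathrm{span}\{t_j^\perp\}$ is a finite-dimensional (hence closed) subspace of $X_+ \oplus X_-$, and $\Delta_0 = \mathcal{D}(T_0) \oplus \mathrm{span}\{t_j^\perp\}$ is an orthogonal direct sum of two closed subspaces in the graph norm, hence closed. Therefore $x \in \Delta_0 = \mathcal{D}(J)$ and $Jx = T_1 x = z$.

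The main obstacle is the closedness step: one must recognize that the sum $\mathcal{D}(T_0) + \mathrm{span}\{t_j\}$ need not be closed in the ambient norm of $H$, but becomes an orthogonal (hence closed) direct sum once one passes to the graph inner product on $\mathcal{D}(T_1)$. After that, everything follows from the finite-dimensionality of the span and the closedness of $\mathcal{D}(T_0)$ in the graph norm.
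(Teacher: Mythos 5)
Your proof is correct, and while it rests on the same two pillars as the paper's proof (Lemma \ref{minimal domain} plus the GKN symmetry conditions for symmetry; the first von Neumann formula and the graph-norm orthogonality (\ref{Orthogonality and Graph Norm}) for closedness), the closedness step is organized quite differently. The paper argues sequentially: in the case $\dim W=1$ it writes $x_j=v_j+\alpha_j t_1$, decomposes $t_1=t_1^0+t_1^++t_1^-$, uses graph-norm orthogonality to show the components converge separately, extracts convergence of the scalars $\alpha_j$ from the fact that $t_1^+$ or $t_1^-$ is nonzero, and then handles general $\dim W$ by induction. You instead make the structural observation that $\Delta_0=\mathcal{D}(T_0)\oplus\mathrm{span}\{t_j^{\perp}\}$ is an orthogonal sum, in the Hilbert space $\bigl(\mathcal{D}(T_1),\langle\cdot,\cdot\rangle_H^{\ast}\bigr)$, of the closed subspace $\mathcal{D}(T_0)$ and a finite-dimensional (hence closed) subspace, where the linear independence of $\{t_j^{\perp}\}$ is exactly condition (\ref{linear independence}); closedness of $\Delta_0$ in the graph norm, and hence closedness of $J$, follows at once. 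This buys you a uniform treatment of all $\dim W$ with no induction and no explicit tracking of coefficient sequences; what the paper's more hands-on argument buys is that it makes visible precisely where the hypothesis $t_1\notin\mathcal{D}(T_0)$ enters (to force convergence of the $\alpha_j$), which in your version is absorbed into the linear-independence claim for the $t_j^{\perp}$. Two small points worth making explicit: symmetry requires that $\mathcal{D}(J)$ be dense, which is immediate from $\mathcal{D}(T_0)\subseteq\Delta_0$ but should be stated; and the identity $\Delta_0=\mathcal{D}(T_0)+\mathrm{span}\{t_j^{\perp}\}$ deserves one line (it holds because each $t_j^{\perp}=t_j-t_j^0$ and each $t_j=t_j^0+t_j^{\perp}$ lie in the respective sums).
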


\begin{proof}
Since $T_{0}$ is densely defined and $\mathcal{D}(T_{0})\subseteq\Delta_{0},$
it is clear that $\mathcal{D}(J)$ is dense in $H.$ Now, from Lemma
\ref{minimal domain} and $($\ref{gkn_sym_cond1}$),$ we see that
\[
\lbrack x,y]_{H}=0\quad(x,y\in\Delta_{0}\subseteq\mathcal{D}(T_{1})).
\]
Hence, from Definition \ref{symplectic_form},
\[
0=[x,y]_{H}=\left\langle T_{1}x,y\right\rangle _{H}-\left\langle
x,T_{1}y\right\rangle _{H}=\left\langle Jx,y\right\rangle _{H}-\left\langle
x,Jy\right\rangle _{H}\quad(x,y\in\mathcal{D}(J)),
\]
establishing that $J$ is symmetric in $H.$ To show that $J$ is closed, suppose
first that $\mathrm{dim}(\Delta_{0})$ $(\mathrm{mod}(\mathcal{D}\left(
T_{0}\right)  ))=1$; that is%
\begin{equation}
\mathcal{D}(J)=\mathcal{D}(T_{0})+\text{\textrm{span}}\{t_{1}\}, \label{J0}%
\end{equation}
where $t_{1}\in\mathcal{D}(T_{1})\diagdown\mathcal{D}(T_{0}).$ Consider a
sequence $\left\{  x_{j}\right\}  \subseteq\Delta_{0}$ and vectors $x,y\in H$
such that $x_{j}\rightarrow x$ and $T_{1}x_{j}=Jx_{j}\rightarrow y$ where the
convergence of both sequences is in $H.$ Of course, we need to show
\begin{equation}
x\in\mathcal{D}(J) \label{J1}%
\end{equation}
and
\begin{equation}
Jx=y. \label{J2}%
\end{equation}
Since $\mathcal{D}(J)\subseteq\mathcal{D}(T_{1})$ and $T_{1}$ is closed, we
know that $x\in\mathcal{D}(T_{1})$ and $T_{1}x=y.$ Hence $($\ref{J2}$)$ will
be established once we show $($\ref{J1}$).$ Now, by $($\ref{J0}$)$ and Theorem
\ref{First von Neumann Formula}, we can write%
\begin{align}
x_{j}  &  =v_{j}+\alpha_{j}t_{1}\label{J3}\\
&  =(\alpha_{j}t_{1}^{0}+v_{j})+\alpha_{j}t_{1}^{+}+\alpha_{j}t_{1}^{-}%
\quad(j\in\mathbb{N})\nonumber
\end{align}
where $t_{1}=t_{1}^{0}+t_{1}^{+}+t_{1}^{-}$ and $v_{j},$ $t_{1}^{0}$
$\in\mathcal{D}(T_{0}),$ $t_{1}^{\pm}\in X_{\pm}$ and $x=x^{0}+x^{+}+x^{-},$
where $x^{0}\in\mathcal{D}(T_{0})$ and $x^{\pm}\in X_{\pm}.$ Since
$x_{j}\rightarrow x$ and $T_{1}x_{j}\rightarrow T_{1}x,$ we see that%
\[
\left\Vert x_{j}-x\right\Vert _{H}^{\ast}\rightarrow0\text{ as }%
j\rightarrow\infty,
\]
where $\left\Vert \cdot\right\Vert _{H}^{\ast}$ is the graph norm given in
$($\ref{Graph Norm}$)$. Since%
\[
\left(  \left\Vert x_{j}-x\right\Vert _{H}^{\ast}\right)  ^{2}=\left(
\left\Vert \alpha_{j}t_{1}^{0}+v_{j}-x^{0}\right\Vert _{H}^{\ast}\right)
^{2}+\left(  \left\Vert \alpha_{j}t_{1}^{+}-x^{+}\right\Vert _{H}^{\ast
}\right)  ^{2}+\left(  \left\Vert \alpha_{j}t_{1}^{-}-x^{-}\right\Vert
_{H}^{\ast}\right)  ^{2},
\]
we see, from $($\ref{Graph Norm}$)$ and $($\ref{Orthogonality and Graph Norm}%
$)$, that
\begin{align}
\alpha_{j}t_{1}^{0}+v_{j}  &  \rightarrow x^{0}\text{ in }H\label{J4}\\
\alpha_{j}t_{1}^{+}  &  \rightarrow x^{+}\text{ in }H\label{J5}\\
\alpha_{j}t_{1}^{-}  &  \rightarrow x^{-}\text{ in }H. \label{J6}%
\end{align}
Since $t_{1}^{+}$ and $t_{1}^{-}$ both cannot be zero (otherwise, $t_{1}%
=t_{1}^{0}\in\mathcal{D}(T_{0})$, contradicting our choice of $t_{1}),$ we see
from either $($\ref{J5}$)$ or $($\ref{J6}$)$ that there exists $\alpha
\in\mathbb{C}$ with $\alpha_{j}\rightarrow\alpha$. It follows that $\alpha
_{j}t_{1}\rightarrow\alpha t_{1}$ in $H.$ Then, from $($\ref{J3}$)$ and
$($\ref{J4}$),$ we see that%
\begin{align*}
x_{j}  &  =v_{j}+\alpha_{j}t_{1}\\
&  =\alpha_{j}t_{1}^{0}+v_{j}+\alpha_{j}t_{1}-\alpha_{j}t_{1}^{0}\\
&  \rightarrow(x^{0}-\alpha t_{1}^{0})+\alpha t_{1}\in\mathcal{D}%
(T_{0})+\mathrm{span}\{t_{1}\}.
\end{align*}
Hence we see that $x=(x^{0}-\alpha t_{1}^{0})+\alpha t_{1}\in\mathcal{D}(J), $
as required. The general proof of this proposition follows by induction on
\textrm{dim}$(W).$
\end{proof}

\begin{remark}
\label{Remark 5}Proposition \ref{closure_prop} shows that, on $\Delta_{0},$
the maximal operator $T_{1}$ is a closed, symmetric operator. Of course,
$T_{1}$ is not, in general, symmetric on $\mathcal{D}(T_{1}).$
\end{remark}

\begin{theorem}
\label{min_op_closed} The operator $\widehat{T}_{0}$ is a closed, densely
defined symmetric operator in $H\oplus W$.
\end{theorem}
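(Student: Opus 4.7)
The plan is to verify the three conditions separately, exploiting the structure $\mathcal{D}(\widehat{T}_0) = \{(x, \Psi x) : x \in \Delta_0\}$. For density in $H \oplus W$, I would take an arbitrary $(y, a) \in H \oplus W$, expand $a = \sum_{j=1}^{\dim(W)} \alpha_j \xi_j$ in the orthonormal basis of $W$, and use density of $\mathcal{D}(T_0)$ in $H$ to pick $s_n \in \mathcal{D}(T_0)$ with $s_n \to y - \sum_j \alpha_j t_j$ in $H$. Setting $x_n := s_n + \sum_j \alpha_j t_j \in \Delta_0$, one obtains $x_n \to y$ in $H$ while $\Psi x_n = \sum_j \alpha_j \xi_j = a$ for every $n$, so $(x_n, \Psi x_n) \to (y, a)$.

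For symmetry, a direct computation reduces $\langle \widehat{T}_0(x, \Psi x), (y, \Psi y)\rangle_{H \oplus W} - \langle (x, \Psi x), \widehat{T}_0(y, \Psi y)\rangle_{H \oplus W}$ to $[x, y]_H + (\langle B \Psi x, \Psi y\rangle_W - \langle \Psi x, B \Psi y\rangle_W)$. The $W$-piece vanishes because $B$ is self-adjoint on the finite-dimensional space $W$, and the $H$-piece vanishes because for $x, y \in \Delta_0$ one writes each as an element of $\mathcal{D}(T_0)$ plus a linear combination of the $t_j$'s and expands $[x, y]_H$ sesquilinearly: the terms involving at least one $\mathcal{D}(T_0)$-piece vanish by Lemma \ref{minimal domain}, while the remaining $[t_i, t_j]_H$ terms vanish by the GKN symmetry condition (\ref{gkn symmetry}).

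The main obstacle is closedness. Suppose $(x_n, \Psi x_n) \to (x, a)$ and $(T_1 x_n, B \Psi x_n) \to (y, b)$ in $H \oplus W$. From $x_n \to x$, $T_1 x_n \to y$ in $H$ with $x_n \in \Delta_0 = \mathcal{D}(J)$, the closedness of $J$ (Proposition \ref{closure_prop}) gives $x \in \Delta_0$ and $T_1 x = y$. The crux is to identify $a$ with $\Psi x$. Writing $x_n = s_n + \sum_j \alpha_j^{(n)} t_j$ uniquely with $s_n \in \mathcal{D}(T_0)$ (by (\ref{linear independence})), the convergence $\Psi x_n = \sum_j \alpha_j^{(n)} \xi_j \to a$ in the orthonormal basis $\{\xi_j\}$ forces $\alpha_j^{(n)} \to \alpha_j^{\infty} := \langle a, \xi_j\rangle_W$.

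Consequently $s_n = x_n - \sum_j \alpha_j^{(n)} t_j \to x - \sum_j \alpha_j^{\infty} t_j =: s^{\ast}$ in $H$, and $T_0 s_n = T_1 x_n - \sum_j \alpha_j^{(n)} T_1 t_j \to T_1 x - \sum_j \alpha_j^{\infty} T_1 t_j = T_1 s^{\ast}$. Since $T_0$ is closed (as $T_1^{\ast}$), $s^{\ast} \in \mathcal{D}(T_0)$, so $x = s^{\ast} + \sum_j \alpha_j^{\infty} t_j$ is a valid $\Delta_0$-decomposition of $x$, yielding $\Psi x = \sum_j \alpha_j^{\infty} \xi_j = a$. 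Hence $(x, a) \in \mathcal{D}(\widehat{T}_0)$, and continuity of $B$ on the finite-dimensional $W$ gives $b = \lim B \Psi x_n = B \Psi x = B a$, so $\widehat{T}_0(x, a) = (T_1 x, B a) = (y, b)$, completing the verification of closedness.
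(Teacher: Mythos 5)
Your proposal is correct and follows essentially the same route as the paper: density via surjectivity of $\Psi$ together with density of $\mathcal{D}(T_{0})$, symmetry via the partial GKN conditions and self-adjointness of $B$, and closedness by first invoking Proposition \ref{closure_prop} to place $x$ in $\Delta_{0}$ and then tracking the coefficients $\alpha_{j}^{(n)}=\left\langle \Psi x_{n},\xi_{j}\right\rangle _{W}$ to identify $a$ with $\Psi x$ using the closedness of $T_{0}$ and linear independence modulo $\mathcal{D}(T_{0})$. The only cosmetic differences are that you spell out the density argument explicitly and inline the symmetry computation that the paper delegates to Proposition \ref{closure_prop}.
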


\begin{proof}
(i) \underline{ $\widehat{T}_{0}$ is Hermitian\newline}: \newline Let $(x,\Psi
x),$ $(y,\Psi y)\in\mathcal{D}(\widehat{T}_{0}).$ Then, by Proposition
\ref{closure_prop} and the fact that $B$ is symmetric in $W$, we see that
\begin{align*}
\left\langle \widehat{T}_{0}(x,\Psi x),(y,\Psi y)\right\rangle _{H\oplus W}
&  =\left\langle (T_{1}x,B\Psi x),(y,\Psi y)\right\rangle _{H\oplus W}\\
&  =\left\langle T_{1}x,y\right\rangle _{H}+\left\langle B\Psi x,\Psi
y\right\rangle _{W}\\
&  =\left\langle x,T_{1}y\right\rangle _{H}+\left\langle \Psi x,B\Psi
y\right\rangle _{W}\\
&  =\left\langle (x,\Psi x),\widehat{T}_{0}(y,\Psi y)\right\rangle _{H\oplus
W}.
\end{align*}
Hence $\widehat{T}_{0}$ is Hermitian.\medskip

(ii) \underline{$\mathcal{D}(\widehat{T}_{0})$ is dense in $H\oplus W$%
:}\newline Since $\mathcal{D}(T_{0})$ is dense in $H$ and $\Psi$ is
surjective, it is clear that $\mathcal{D}(\widehat{T}_{0})$ is dense in
$H\oplus W.\medskip$

(iii) \underline{$\widehat{T}_{0}$ is symmetric in $H\oplus W$}:

\noindent This follows immediately from (i) and (ii)\medskip.

(iv) \underline{$\widehat{T}_{0}$ is closed in $H\oplus W$}:\newline Suppose
that $\{(x_{n},\Psi x_{n})\}\subseteq\mathcal{D}(\widehat{T}_{0})$ is such
that%
\begin{equation}
(x_{n},\Psi x_{n})\rightarrow(x,a)\text{ in }H\oplus W \label{S1}%
\end{equation}
and%
\begin{equation}
\widehat{T}_{0}(x_{n},\Psi x_{n})\rightarrow(y,b)\text{ in }H\oplus W.
\label{S2}%
\end{equation}
These conditions in $($\ref{S1}$)$ and $($\ref{S2}$)$ are equivalent to%
\begin{equation}
x_{n}\rightarrow x\text{ and }T_{1}x_{n}\rightarrow y\text{ in }H \label{S2.5}%
\end{equation}
and%
\begin{equation}
\Psi x_{n}\rightarrow a\text{ and }B\Psi x_{n}\rightarrow b\text{ in }W.
\label{S3}%
\end{equation}
We need to show that $(x,a)\in\mathcal{D}(\widehat{T}_{0})$ and $\widehat
{T}_{0}(x,a)=(y,b);$ that is to say, we need to prove:%
\begin{equation}
x\in\Delta_{0} \label{S4}%
\end{equation}%
\begin{equation}
T_{1}x=y \label{S5}%
\end{equation}%
\begin{equation}
\Psi x=a \label{S6}%
\end{equation}
and%
\begin{equation}
Ba=b. \label{S7}%
\end{equation}
Since $\{x_{n}\}\subseteq\Delta_{0},$ we see that $T_{1}x_{n}=Jx_{n}$ so, by
Proposition \ref{closure_prop},
\[
x\in\Delta_{0}\text{ and }Jx=T_{1}x=y,
\]
establishing $($\ref{S4}$)$ and $($\ref{S5}$).$ For the remainder of this
proof, write%
\begin{equation}
x=x_{0}+t \label{S8}%
\end{equation}

\noindent and
\begin{equation}
x_{n}=x_{n,0}+t_{n}, \label{S9}%
\end{equation}
where $x_{0},$ $x_{n,0}\in\mathcal{D}(T_{0})$,%
\[
t=\sum_{j=1}^{\dim W}\alpha_{j}t_{j},
\]
and%
\[
t_{n}=\sum_{j=1}^{\dim W}\alpha_{n,j}t_{j}.
\]
From $($\ref{S9}$)$ and the definition of $\Psi,$ we see that
\[
\Psi x_{n}=\Psi(x_{n,0}+t_{n})=\Psi t_{n}=\sum_{j=1}^{\dim W}\alpha_{n,j}%
\xi_{j}%
\]
so that, from $($\ref{S3}$),$
\[
\alpha_{n,j}=(\Psi x_{n},\xi_{j})_{W}\rightarrow\left\langle a,\xi
_{j}\right\rangle _{W}.
\]
It follows that
\begin{equation}
t_{n}\rightarrow\widehat{t}:=\sum_{j=1}^{\dim W}\left\langle a,\xi
_{j}\right\rangle _{W}t_{j} \label{S10}%
\end{equation}
so that $T_{1}t_{n}\rightarrow T_{1}\widehat{t}.$ Notice that
\begin{equation}
\Psi t_{n}\rightarrow\Psi\widehat{t}=\sum_{j=1}^{\dim W}\left\langle a,\xi
_{j}\right\rangle _{W}\xi_{j}=a. \label{S11}%
\end{equation}
From $($\ref{S2.5}$)$, $($\ref{S9}$)$ and $($\ref{S10}$),$ we deduce that
\begin{align*}
x_{n,0}  &  =x_{n}-t_{n}\rightarrow x-\widehat{t}=x_{0}+t-\widehat{t}\text{ in
}H\\
T_{0}x_{n,0}  &  =T_{1}x_{n}-T_{1}t_{n}\rightarrow y-T_{1}\widehat{t}\text{ in
}H.
\end{align*}
Since $T_{0}$ is closed, we see that $x_{0}+t-\widehat{t}\in\mathcal{D}%
(T_{0})$ and, in particular, that $t-\widehat{t}\in\mathcal{D}(T_{0})$. By
definition of the partial GKN set $\{t_{j}\mid1\leq j\leq\dim W\},$ we must
have%
\begin{equation}
t=\widehat{t}. \label{S12}%
\end{equation}
Combining $($\ref{S11}$)$ and $($\ref{S12}$),$ we obtain%
\[
\Psi x_{n}=\Psi t_{n}\rightarrow\Psi\widehat{t}=\Psi t=\Psi(x-x_{0})=\Psi
x=a;
\]
establishing $($\ref{S6}$).$ Finally,%
\[
B\Psi x_{n}\rightarrow B\Psi x=Ba
\]
so, by $($\ref{S3}$),$ $Ba=b$ which proves $($\ref{S7}$)$. This completes the
proof that $\widehat{T}_{0}$ is closed.
\end{proof}

This brings us to the proof of the fundamental relation between the maximal
and minimal operators $\widehat{T}_{1}$ and $\widehat{T}_{0}$.

\begin{theorem}
\label{adjoint_lemma} $(\widehat{T}_{0})^{\ast}=\widehat{T}_{1}.$
\end{theorem}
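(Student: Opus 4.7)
The proof is by the standard two-inclusion argument $\widehat{T}_1 \subseteq (\widehat{T}_0)^*$ and $(\widehat{T}_0)^* \subseteq \widehat{T}_1$. For the first inclusion, I would fix $(x,a) \in \mathcal{D}(\widehat{T}_1)$ and $(y,\Psi y) \in \mathcal{D}(\widehat{T}_0)$ and compute $\langle \widehat{T}_1(x,a),(y,\Psi y)\rangle_{H\oplus W}$ and $\langle (x,a),\widehat{T}_0(y,\Psi y)\rangle_{H\oplus W}$ directly from Definitions \ref{min_op} and \ref{max_op}. Using self-adjointness of $B$ in $W$, the $B$-terms cancel, and the difference of the two inner products reduces to $[x,y]_H - \langle \Omega x,\Psi y\rangle_W$. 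Writing $y = y_0 + \sum_{j=1}^{\dim W}\beta_j t_j$ with $y_0 \in \mathcal{D}(T_0)$, Lemma \ref{minimal domain} kills the $y_0$-contribution in $[x,y]_H$, while the definitions of $\Omega$ and $\Psi$ together with orthonormality of $\{\xi_j\}$ give $\langle \Omega x,\Psi y\rangle_W = \sum_j \overline{\beta_j}[x,t_j]_H = [x,y]_H$. Hence the two sides agree, yielding $(x,a) \in \mathcal{D}((\widehat{T}_0)^*)$ with $(\widehat{T}_0)^*(x,a) = \widehat{T}_1(x,a)$.

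For the reverse inclusion, suppose $(x,a) \in \mathcal{D}((\widehat{T}_0)^*)$ and $(\widehat{T}_0)^*(x,a) = (u,b)$. I would first specialize the adjoint identity to test vectors of the form $(y,0)$ with $y \in \mathcal{D}(T_0)$: since then $\Psi y = 0$ and $T_1 y = T_0 y$, the identity collapses to $\langle u,y\rangle_H = \langle x,T_0 y\rangle_H$ for every $y \in \mathcal{D}(T_0)$, which by the definition of the adjoint forces $x \in \mathcal{D}(T_0^*) = \mathcal{D}(T_1)$ and $u = T_1 x$. This locates $(x,a)$ inside $\mathcal{D}(\widehat{T}_1)$. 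Next, for general $y \in \Delta_0$, substituting $u = T_1 x$ and reversing the Step 1 computation reduces the adjoint identity to
\[
\langle \Omega x,\Psi y\rangle_W \;=\; \langle Ba - b,\Psi y\rangle_W \qquad (y \in \Delta_0).
\]

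The main obstacle, and the point where the design of $\Psi$ is essential, is to extract from this the pointwise equality $\Omega x = Ba - b$ in $W$. Here I would exploit that $\Psi$ sends the partial GKN set $\{t_j\}$ onto the orthonormal basis $\{\xi_j\}$ of $W$, so $\Psi(\Delta_0) = W$; consequently $\Psi y$ sweeps out all of $W$ as $y$ ranges over $\Delta_0$, forcing $b = Ba - \Omega x$. Combined with $u = T_1 x$, this gives $(u,b) = \widehat{T}_1(x,a)$ and completes the proof. The only subtlety worth double-checking is well-definedness of $\Psi$, which is guaranteed by the linear independence of $\{t_j\}$ modulo $\mathcal{D}(T_0)$ in \eqref{linear independence}, together with $\Psi|_{\mathcal{D}(T_0)} = 0$ as prescribed in \eqref{Psi zero on minimal domain}.
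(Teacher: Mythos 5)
Your proposal is correct and follows essentially the same route as the paper's proof: the same two-inclusion argument, the same cancellation of the $B$-terms and reduction to $[x,y]_H-\left\langle \Omega x,\Psi y\right\rangle _{W}$ via Lemma \ref{minimal domain} and the definition of $\Omega$, and the same specialization to $y\in\mathcal{D}(T_{0})$ to get $x\in\mathcal{D}(T_{1})$ with $x^{\ast}=T_{1}x$. The only cosmetic difference is that you extract $b=Ba-\Omega x$ from surjectivity of $\Psi$ on $\Delta_{0}$, whereas the paper tests against $y=t_{k}$ and uses that $\{\xi_{k}\}$ is a basis of $W$ --- these are the same argument.
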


\begin{proof}
For $(x,a)\ ,(y,b)\in\mathcal{D(}\widehat{T}_{1}),$ a calculation shows that
\begin{equation}
\left\langle \widehat{T}_{1}(x,a),(y,b)\right\rangle _{H\oplus W}-\left\langle
(x,a),\widehat{T}_{1}(y,b)\right\rangle _{H\oplus W}=[x,y]_{H}-\left\langle
\Omega x,b\right\rangle _{W}+\left\langle a,\Omega y\right\rangle _{W}.
\label{symp-form-eq}%
\end{equation}
(see Definition \ref{General Symplectic Form}). Notice that when $y=t_{k}$ and
$b=\xi_{k}$, we obtain
\begin{equation}
\left\langle \Omega x,\xi_{j}\right\rangle _{W}=\sum_{k=1}^{\dim
W}\left\langle [x,t_{k}]_{H}\xi_{k},\xi_{j}\right\rangle _{W}=[x,t_{j}]_{H}
\label{A2}%
\end{equation}
since $\{\xi_{j}\mid j=1,\ldots,\dim W\}$ is an orthonormal basis of $W.$
Suppose now that $(y,b)\in\mathcal{D(}\widehat{T}_{0})$ so $y\in\Delta_{0} $
and $b=\Psi y.$ Then
\[
y=y_{0}+\widetilde{t},
\]
where $y_{0}\in\mathcal{D}(T_{0}),$
\[
\widetilde{t}:=\sum_{j=1}^{\dim W}\alpha_{j}t_{j}%
\]
and%
\[
b=\Psi y=\Psi\widetilde{t}=\sum_{j=1}^{\dim W}\alpha_{j}\xi_{j}.
\]
By $($\ref{T_0 contained in T_1}$),$ $\widehat{T}_{0}(y,\Psi y)=\widehat
{T}_{1}(y,\Psi y)$ so, from $($\ref{symp-form-eq}$),$ we obtain
\begin{align}
&  \left\langle \widehat{T}_{1}(x,a),(y,\Psi y)\right\rangle _{H\oplus
W}-\left\langle (x,a),\widehat{T}_{0}(y,\Psi y)\right\rangle _{H\oplus
W}\nonumber\\
&  =[x,y]_{H}-\left\langle \Omega x,b\right\rangle _{W}+\left\langle a,\Omega
y\right\rangle _{W}\nonumber\\
&  =[x,y_{0}]_{H}+[x,\widetilde{t}]_{H}-\left\langle \Omega x,\Psi
y_{0}\right\rangle _{W}-\left\langle \Omega x,\Psi\widetilde{t}\right\rangle
_{W}+\left\langle a,\Omega y\right\rangle _{W}. \label{A3}%
\end{align}
We now deal with each of the five terms in $($\ref{A3}$).$ First, from Lemma
\ref{minimal domain},
\begin{equation}
\lbrack x,y_{0}]_{H}=0. \label{A4}%
\end{equation}
From $($\ref{A2}$)$, we see that%
\begin{equation}
\lbrack x,\widetilde{t}]_{H}-\left\langle \Omega x,\Psi\widetilde
{t}\right\rangle _{W}=\sum_{j=1}^{\dim W}\alpha_{j}\{[x,t_{j}]_{H}%
-\left\langle \Omega x,\xi_{j}\right\rangle _{W}\}=0. \label{A5}%
\end{equation}
From $($\ref{Psi zero on minimal domain}$),$ $\Psi y_{0}=0$ so
\begin{equation}
\left\langle \Omega x,\Psi y_{0}\right\rangle _{W}=0. \label{A6}%
\end{equation}
Likewise, from $($\ref{Property of Omega}$),$ we see that $\Omega y=0$ so
\begin{equation}
\left\langle a,\Omega y\right\rangle _{W}. \label{A7}%
\end{equation}
Together, $($\ref{A4}$),$ $($\ref{A5}$),$ $($\ref{A6}$)$ and $($\ref{A7}$)$
show that
\begin{equation}
\left\langle \widehat{T}_{1}(x,a),(y,\Psi y)\right\rangle _{H\oplus
W}=\left\langle (x,a),\widehat{T}_{0}(y,\Psi y)\right\rangle _{H\oplus W}%
\quad((x,a)\in\mathcal{D}(\widehat{T}_{1}),\text{ }(y,\Psi y)\in
\mathcal{D(}\widehat{T}_{0})) \label{A8}%
\end{equation}
and, hence, we obtain
\begin{equation}
\widehat{T}_{1}\subseteq(\widehat{T}_{0})^{\ast}. \label{A9}%
\end{equation}
To show $(\widehat{T}_{0})^{\ast}\subseteq\widehat{T}_{1}$, let $(x,a)\in
\mathcal{D}((\widehat{T}_{0})^{\ast})$ and set $(x^{\ast},a^{\ast}%
)=(\widehat{T}_{0})^{\ast}(x,a).$ Then for $(y,\Psi y)\in\mathcal{D(}%
\widehat{T}_{0}),$%
\begin{equation}
\left\langle (x^{\ast},a^{\ast}),(y,\Psi y)\right\rangle _{H\oplus
W}=\left\langle (\widehat{T}_{0})^{\ast}(x,a),(y,\Psi y)\right\rangle
_{H\oplus W}=\left\langle (x,a),\widehat{T}_{0}(y,\Psi y)\right\rangle
_{H\oplus W} \label{A9.5}%
\end{equation}
since $\widehat{T}_{0}$ is closed. Written out, the identity in $($%
\ref{A9.5}$)$ gives%
\begin{equation}
\left\langle x^{\ast},y\right\rangle _{H}+\left\langle a^{\ast},\Psi
y\right\rangle _{W}=\left\langle x,T_{0}y\right\rangle _{H}+\left\langle
a,B\Psi y\right\rangle _{W}. \label{A10}%
\end{equation}
In particular, if $y\in\mathcal{D}(T_{0}),$ then $($\ref{A10}$)$ reduces to
\[
\left\langle x^{\ast},y\right\rangle _{H}=\left\langle x,T_{0}y\right\rangle
_{H}.
\]
Thus $x\in\mathcal{D}(T_{0}^{\ast})=\mathcal{D}(T_{1})$ and \textrm{def}%
\begin{equation}
T_{1}x=x^{\ast}. \label{A11}%
\end{equation}
Substituting $($\ref{A11}$)$ into $($\ref{A10}$)$ and recalling that $B$ is
symmetric in $W$ yields%
\begin{align}
\left\langle a^{\ast},\Psi y\right\rangle _{W}  &  =\left\langle
x,T_{0}y\right\rangle _{H}+\left\langle a,B\Psi y\right\rangle _{W}%
-\left\langle T_{1}x,y\right\rangle _{H}\nonumber\\
&  =-[x,y]_{H}+\left\langle Ba,\Psi y\right\rangle _{W}. \label{A12}%
\end{align}
In particular, let $y=t_{k}$ so $\Psi y=\xi_{k}$. From $($\ref{A2}$),$ we see
that $[x,t_{k}]_{H}=\left\langle \Omega x,\xi_{k}\right\rangle _{W}.$ Hence,
we find that \ref{A12}$)$ becomes%
\begin{equation}
\left\langle a^{\ast},\xi_{k}\right\rangle _{W}=-\left\langle \Omega x,\xi
_{k}\right\rangle _{W}+\left\langle Ba,\xi_{k}\right\rangle _{W}%
\quad(k=1,2,\ldots,\dim W). \label{A13}%
\end{equation}
Since $\{\xi_{k}\mid k=1,\ldots,\dim W\}$ is a basis for $W,$ we can conclude
from $($\ref{A13}$)$ that%
\begin{equation}
a^{\ast}=Ba-\Omega x. \label{A14}%
\end{equation}

\noindent Consequently, from $($\ref{A11}$)$ and $($\ref{A14}$),$ we see that%
\[
(x^{\ast},a^{\ast})=(T_{1}x,Ba-\Omega x)=\widehat{T}_{1}(x,a)
\]
so
\begin{equation}
(\widehat{T}_{0})^{\ast}\subseteq\widehat{T}_{1}. \label{A15}%
\end{equation}
Combining $($\ref{A9}$)$ and $($\ref{A15}$)$, we obtain $(\widehat{T}%
_{0})^{\ast}=\widehat{T}_{1}.$
\end{proof}

Together Theorem \ref{min_op_closed} and Theorem \ref{adjoint_lemma} establish
the following fundamental operator relationship between $\widehat{T}_{0}$ and
$\widehat{T}_{1}.$

\begin{theorem}
\label{max_min_thm}$\widehat{T}_{0}=\overline{\widehat{T}_{0}}\subseteq
(\widehat{T}_{0})^{\ast}=\widehat{T}_{1}.$
\end{theorem}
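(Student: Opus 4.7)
The proof is essentially a packaging step that concatenates the two substantive results already proved, namely Theorem \ref{min_op_closed} and Theorem \ref{adjoint_lemma}. My plan is to extract the three claims in the display $\widehat{T}_{0}=\overline{\widehat{T}_{0}}\subseteq(\widehat{T}_{0})^{\ast}=\widehat{T}_{1}$ and verify each in turn, drawing on the preceding results rather than redoing any calculations.

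First, I would invoke Theorem \ref{min_op_closed}, which asserts that $\widehat{T}_{0}$ is closed, densely defined, and symmetric in $H\oplus W$. Closedness of $\widehat{T}_{0}$ is precisely the statement $\widehat{T}_{0}=\overline{\widehat{T}_{0}}$, so the first equality is immediate. Densely definedness is what makes $(\widehat{T}_{0})^{\ast}$ a well-defined (single-valued) operator in $H\oplus W$, which is needed for the remainder of the statement to even make sense.

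Next, for the middle inclusion $\widehat{T}_{0}\subseteq(\widehat{T}_{0})^{\ast}$, I would use the symmetry assertion of Theorem \ref{min_op_closed}: by definition, a densely defined symmetric operator is contained in its Hilbert space adjoint, so $\widehat{T}_{0}\subseteq(\widehat{T}_{0})^{\ast}$ follows without further work. Finally, for the terminal equality $(\widehat{T}_{0})^{\ast}=\widehat{T}_{1}$, I would simply cite Theorem \ref{adjoint_lemma}, where this identity was established through the two reverse inclusions (\ref{A9}) and (\ref{A15}).

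Because every piece of the claim is available from the immediately preceding theorems, there is no genuine obstacle; the entire proof is a one-paragraph assembly, and the only thing to be careful about is making clear which assertion of Theorem \ref{min_op_closed} supplies which half of the display. For clarity I would present the three statements in the same left-to-right order as they appear in the theorem, so the reader can see at a glance that (closedness) $\Rightarrow$ first equality, (symmetry $+$ density) $\Rightarrow$ middle inclusion, and Theorem \ref{adjoint_lemma} $\Rightarrow$ final equality.
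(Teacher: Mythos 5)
Your proposal is correct and matches the paper exactly: the paper offers no separate argument, simply stating that Theorems \ref{min_op_closed} and \ref{adjoint_lemma} together establish the display, which is precisely your assembly of (closedness) $\Rightarrow$ first equality, (density $+$ symmetry) $\Rightarrow$ middle inclusion, and Theorem \ref{adjoint_lemma} $\Rightarrow$ final equality.
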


Consequently we may apply the Stone-von Neumann theory to the minimal operator
$\widehat{T}_{0}.$ Accordingly we define the \textit{positive} and
\textit{negative deficiency spaces} associated with $\widehat{T}_{0}$ in
$H\oplus W$

\begin{definition}
[Deficiency Spaces in the Extended Space $H\oplus W$]\label{def_space_y}
\[
Y_{\pm}:=\{(x,a)\in\mathcal{D(}\widehat{T}_{1})\mid\ \widehat{T}_{1}(x,a)=\pm
i(x,a)\}.
\]

\end{definition}

Remarkably, as we shall see in the next result, the deficiency spaces $Y_{\pm
}$ $\subseteq H\oplus W$and $X_{\pm}\subseteq H$ are \textit{isomorphic}. We
note that, since $B:W\rightarrow W$ is self-adjoint, then $B\pm iI$ is invertible.

\begin{lemma}
\label{def_space_iso} $(x,a)\in Y_{\pm}$ if and only if $x\in X_{\pm}$ and
$a=\left(  B\mp iI\right)  ^{-1}\Omega x.$ Moreover, the deficiency indices of
$\widehat{T}_{0}$ are equal and finite and satisfy \textrm{def}$(\widehat
{T}_{0})=\mathrm{def}(T_{0}).$
\end{lemma}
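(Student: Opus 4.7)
The plan is to unfold the eigenvalue condition in Definition \ref{def_space_y} component-by-component, solve for $a$ explicitly in terms of $x$, and then transfer the dimension count from $X_{\pm}$ to $Y_{\pm}$ via the resulting explicit bijection.

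First, I would write out what $(x,a)\in Y_{\pm}$ means. By Definition \ref{max_op}, the equation $\widehat{T}_{1}(x,a)=\pm i(x,a)$ reads
\[
(T_{1}x,\ Ba-\Omega x) \;=\; (\pm i x,\ \pm i a),
\]
which splits into the two independent requirements $T_{1}x=\pm ix$ and $(B\mp iI)a=\Omega x$. The first is exactly the statement $x\in X_{\pm}$. For the second, I would invoke the fact that $B:W\to W$ is self-adjoint on a finite-dimensional inner product space, so its spectrum is real and $B\mp iI$ is invertible. Solving yields $a=(B\mp iI)^{-1}\Omega x$. Reversing the steps shows the converse: given $x\in X_{\pm}$, setting $a=(B\mp iI)^{-1}\Omega x$ produces an element of $\mathcal{D}(\widehat{T}_{1})$ satisfying the eigenvalue equation, hence lying in $Y_{\pm}$.

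For the dimension statement, I would define the map $\Phi_{\pm}:X_{\pm}\to Y_{\pm}$ by
\[
\Phi_{\pm}(x) \;=\; \bigl(x,\ (B\mp iI)^{-1}\Omega x\bigr).
\]
The first part of the lemma shows $\Phi_{\pm}$ is well-defined and surjective onto $Y_{\pm}$. Linearity is inherited from linearity of $\Omega$ (see \eqref{Omega map}) and of $(B\mp iI)^{-1}$. Injectivity is immediate from the first coordinate. Hence $\Phi_{\pm}$ is a linear isomorphism, giving $\dim Y_{\pm}=\dim X_{\pm}$. By Condition \ref{eq_def_ind} the two deficiency indices of $T_{0}$ coincide and are finite, so the same holds for $\widehat{T}_{0}$, and $\mathrm{def}(\widehat{T}_{0})=\mathrm{def}(T_{0})$ follows.

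There is no real obstacle here; the only thing requiring care is the legitimacy of inverting $B\mp iI$, which the statement preceding the lemma already notes, and the bookkeeping of signs so that the $+i$ eigenspace of $\widehat{T}_{1}$ corresponds to $B-iI$ rather than $B+iI$. The explicit form $a=(B\mp iI)^{-1}\Omega x$ will also be convenient in the subsequent sections when characterizing self-adjoint extensions of $\widehat{T}_{0}$ via the GKN-EM Theorem, since it makes the isomorphism $X_{\pm}\cong Y_{\pm}$ concrete.
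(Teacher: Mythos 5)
Your proposal is correct and follows essentially the same route as the paper: unfold the eigenvalue equation $\widehat{T}_{1}(x,a)=\pm i(x,a)$ into $T_{1}x=\pm ix$ and $(B\mp iI)a=\Omega x$, invert $B\mp iI$ using self-adjointness of $B$, and observe that $x\mapsto\bigl(x,(B\mp iI)^{-1}\Omega x\bigr)$ is a linear isomorphism $X_{\pm}\to Y_{\pm}$, which transfers the dimension count. Your sign bookkeeping (the $+i$ eigenspace pairing with $B-iI$) matches the paper's convention.
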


\begin{proof}
Let $(x,a)\in Y_{\pm}$. Then $T_{1}x=\pm ix$ and $Ba-\Omega x=\pm ia.$
Therefore $x\in X_{\pm}$ and $a=\left(  B\mp iI\right)  ^{-1}\Omega x$.
Conversely if $x\in X_{\pm}$ and $a=\left(  B\mp iI\right)  ^{-1}\Omega x$
then $Ba-\Omega x=\pm ia$ so $\widehat{T}_{1}(x,a)=\pm i(x,a)$. We see that
the mappings $X_{\pm}\rightarrow Y_{\pm}$ given by $x\rightarrow(x,\left(
B\mp iI\right)  ^{-1}\Omega x)$ are vector space isomorphisms. In particular,
$\dim\left(  X_{\pm}\right)  =\dim\left(  Y_{\pm}\right)  .$ This shows that
the deficiency indices of the minimal operator $\widehat{T}_{0}$ are finite
and equal with
\begin{equation}
\dim\left(  Y_{+}\right)  =\dim\left(  Y_{-}\right)  =\text{def}(\widehat
{T}_{0})<\infty. \label{eq_def_indc}%
\end{equation}

\end{proof}

In particular, equation (\ref{eq_def_indc}) guarantees the GKN-EM theorem
applies to $\widehat{T}_{0}$. We now define the (degenerate) symplectic form
in $H\oplus W$ associated with the operators $\widehat{T}_{0}$ and
$\widehat{T}_{1}.$ We remark that, in equation (\ref{symp-form-eq}), we
actually already computed this symplectic form.

\begin{definition}
[General Symplectic Form]\label{general symplectic form}
\begin{equation}
\left[  (x,a),(y,b)\right]  _{H\oplus W}:=[x,y]_{H}-\left\langle \Omega
x,b\right\rangle _{W}+\left\langle a,\Omega y\right\rangle _{W}\qquad
((x,a),(y,b)\in\mathcal{D(}\widehat{T}_{1})), \label{General Symplectic Form}%
\end{equation}
where $[\cdot,\cdot]_{H}$ is the symplectic form defined in $($%
\ref{symplectic_form}$)$ and where the mapping $\Omega$ is defined in
$($\ref{Omega map}$).$
\end{definition}

We are now in position to apply the GKN-EM Theorem (Theorem
\ref{The GKN-EM Theorem}) to the minimal operator $\widehat{T}_{0}$ in
$H\oplus W$ and, as a result, characterize all self-adjoint extensions
(respectively, restrictions) of $\widehat{T}_{0}$ (respectively, the maximal
operator $\widehat{T}_{1}).$

\begin{theorem}
[GKN-EM Theorem in $H\oplus W$]\label{Self-Adjoint Operators in H+W}We have
the following assumptions/definitions\medskip$:$

\begin{enumerate}
\item[(i)] $T_{0}$ and $T_{1}$ are, respectively, the minimal and maximal
operators in $\left(  H,\left\langle \cdot,\cdot\right\rangle _{H}\right)  ,$
called the base $($complex$)$ Hilbert space, with domains $\mathcal{D}(T_{0})
$ and $\mathcal{D}(T_{1});$ $T_{0}$ is a closed, symmetric operator satisfying
$T_{0}\subseteq T_{1}$ with $T_{0}^{\ast}=T_{1}$ and $T_{1}^{\ast}=T_{0};$

\item[(ii)] The deficiency indices of $T_{0}$ are assumed to be equal and
finite and denoted by \textrm{def}$(T_{0});$

\item[(iii)] $[\cdot,\cdot]_{H}$ is the symplectic form given by%
\[
\lbrack x,y]_{H}=\left\langle T_{1}x,y\right\rangle _{H}-\left\langle
x,T_{1}y\right\rangle _{H}\quad(x,y\in\mathcal{D}(T_{1})),
\]
$($see Definition \ref{symplectic_form}$);$

\item[(iv)] $(W,\left\langle \cdot,\cdot\right\rangle _{W}),$ the extension
space, is a finite dimensional complex Hilbert space with \textrm{dim}$W\leq$
\textrm{def}$(T_{0})$ $($Condition \ref{Dimensionality Condition}$)$ and
orthonormal basis $\{\xi_{j}\mid j=1,\ldots,\dim W\};$

\item[(v)] $B:W\rightarrow W$ is a self-adjoint operator;

\item[(vi)] $H\oplus W$, the extended space, is the Hilbert space defined in
$($\ref{H+W space}$)$ with inner product $($\ref{H+W IP}$);$

\item[(vii)] $P=\{t_{j}\mid j=1,\ldots,\dim W\}$ is a partial GKN set $($see
$($\ref{linear independence}$)$ and $($\ref{gkn symmetry}$));$

\item[(viii)] $\Delta_{0}=\mathcal{D}(T_{0})+\mathrm{span}\{t_{j}\mid
j=1,\ldots,\dim W\}$ $($see $($\ref{Delta_0 set}$));$

\item[(ix)] $\Psi:\Delta_{0}\rightarrow W$ is defined to be%
\[
\Psi\left(  x_{0}+\sum_{j=1}^{\dim W}\alpha_{j}t_{j}\right)  =\sum_{j=1}^{\dim
W}\alpha_{j}\xi_{j}\quad(x_{0}\in\mathcal{D}(T_{0}))
\]
$($see $($\ref{Psi zero on minimal domain}$));$

\item[(x)] $\Omega:\mathcal{D}(T_{1})\rightarrow W$ is given by%
\[
\Omega x=\sum_{j=1}^{\dim W}[x,t_{j}]_{H}\xi_{j}%
\]
$($see $($\ref{Omega map}$));$

\item[(xi)] $\widehat{T}_{0}:\mathcal{D}(\widehat{T}_{0})\subseteq H\oplus
W\rightarrow H\oplus W$ is the minimal operator in $H\oplus W$ defined by%
\begin{align*}
\widehat{T}_{0}(x,a)  &  =(T_{1}x,Ba)\\
(x,a)  &  \in\mathcal{D}(\widehat{T}_{0})=\{(x,\Psi x)\mid x\in\Delta_{0}\}
\end{align*}
$($see Definition \ref{min_op}$);$

\item[(xii)] $\widehat{T}_{1}:\mathcal{D}(\widehat{T}_{1})\subseteq H\oplus
W\rightarrow H\oplus W$ is the maximal operator in $H\oplus W$ defined by%
\begin{align*}
\widehat{T}_{1}(x,a)  &  =(T_{1}x,Ba-\Omega x)\\
\mathcal{D}(\widehat{T_{1}})  &  =\{(x,a)\mid x\in\mathcal{D}(T_{1});a\in W\}
\end{align*}
$($see \ref{max_op}$);$

\item[(xiii)] $[\cdot,\cdot]_{H\oplus W}$ is the symplectic form given by%
\[
\left[  (x,a),(y,b)\right]  _{H\oplus W}:=[x,y]_{H}-\left\langle \Omega
x,b\right\rangle _{W}+\left\langle a,\Omega y\right\rangle _{W}\qquad
((x,a),(y,b)\in\mathcal{D(}\widehat{T}_{1}),
\]
$($see \ref{General Symplectic Form}$).\medskip$
\end{enumerate}

Under these definitions and assumptions, we obtain the following results$:$

\begin{enumerate}
\item[(a)] $\widehat{T}_{0}$ \ is a closed, symmetric operator satisfying
$\widehat{T}_{0}\subseteq\widehat{T}_{1}$ with $(\widehat{T}_{0})^{\ast
}=\widehat{T}_{1}$ and $(\widehat{T}_{1})^{\ast}=\widehat{T}_{0}$ $($see
Theorems \ref{min_op_closed} and \ref{adjoint_lemma}$);$

\item[(b)] The deficiency indices of $\widehat{T}_{0}$ are equal and finite
and \textrm{def}$(\widehat{T}_{0})=\mathrm{def}(T_{0})$ $($see Lemma
\ref{def_space_iso}$);$

\item[(c)] Suppose $\widehat{T}$ is a self-adjoint extension of $\widehat
{T}_{0}$ $($equivalently, $\widehat{T}$ is a self-adjoint restriction of
$\widehat{T}_{1})$ satisfying $\widehat{T}_{0}\subseteq\widehat{T}$
$\mathcal{\subseteq}$ $\widehat{T}_{1}.$ Then there exists a GKN set
$\{(x_{j},a_{j})\mid j=1,\ldots,\mathrm{def}(T_{0})\}\subseteq\mathcal{D}%
(\widehat{T}_{1})$ $($see Remark \ref{Remark 2}$)$ satisfying the two conditions

\begin{enumerate}
\item[($\alpha$)] $\{(x_{j},a_{j})\mid j=1,\ldots,\mathrm{def}(T_{0})\}$ is
linearly independent modulo $\mathcal{D}(\widehat{T}_{0}),$

\item[($\beta$)] $[(x_{j},a_{j}),(x_{k},a_{k})]_{H\oplus W}=0$ for
$j,k=1,\ldots,\mathrm{def}(T_{0})$
\end{enumerate}

\noindent such that%
\begin{align}
\widehat{T}(x,a)  &  =(T_{1}x,Ba-\Omega x)\label{F1}\\
\mathcal{D}(\widehat{T})  &  =\{(x,a)\in\mathcal{D}(\widehat{T}_{1}%
)\mid\lbrack(x,a),(x_{j},a_{j})]_{H\oplus W}=0\text{ }(j=1,\ldots
,\mathrm{def}(T_{0}))\}. \label{F2}%
\end{align}

\item[(d)] If $\widehat{T}$ is defined by $($\ref{F1}$)$ and $($\ref{F2}$)$
where $\{(x_{j},a_{j})\mid j=1,\ldots,\mathrm{def}(T_{0})\}\subseteq
\mathcal{D}(\widehat{T}_{1})$ is a GKN set satisfying conditions $(\alpha)$
and $(\beta),$ then $\widehat{T}$ is a self-adjoint extension of $\widehat
{T}_{0}$ $($equivalently, $\widehat{T}$ is a self-adjoint restriction of
$\widehat{T}_{1})$ in $H\oplus W.$
\end{enumerate}
\end{theorem}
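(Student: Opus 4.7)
The plan is to observe that this is essentially a summary theorem: parts (a) and (b) are direct repackagings of results already proved, and parts (c) and (d) are obtained by applying the Finite-Dimensional Symplectic GKN-EM Theorem (Theorem \ref{The GKN-EM Theorem}) to the operator $\widehat{T}_0$ in the extended Hilbert space $H\oplus W$. Concretely, part (a) is exactly the content of Theorem \ref{min_op_closed} (closedness and symmetry of $\widehat{T}_0$) together with Theorem \ref{adjoint_lemma} and Theorem \ref{max_min_thm} (the adjointness relations $(\widehat{T}_0)^\ast=\widehat{T}_1$ and, by taking adjoints of both sides of $\widehat{T}_0 = \overline{\widehat{T}_0}$, $(\widehat{T}_1)^\ast = \widehat{T}_0$). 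Part (b) is Lemma \ref{def_space_iso}, which provides the vector space isomorphism $X_\pm \to Y_\pm$, $x\mapsto (x,(B\mp iI)^{-1}\Omega x)$, and hence equality of deficiency indices.

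For parts (c) and (d), the strategy is to invoke Theorem \ref{The GKN-EM Theorem} in the triple $(H\oplus W,\widehat{T}_0,\widehat{T}_1)$ in place of $(H,T_0,T_1)$. By parts (a) and (b) the required hypotheses are satisfied: $\widehat{T}_0$ is closed and symmetric in $H\oplus W$, $\widehat{T}_0\subseteq \widehat{T}_1$, $(\widehat{T}_1)^\ast=\widehat{T}_0$, $(\widehat{T}_0)^\ast=\widehat{T}_1$, and the deficiency indices are equal and finite with $\mathrm{def}(\widehat{T}_0)=\mathrm{def}(T_0)$. The only non-tautological ingredient is to check that the symplectic form on $\mathcal{D}(\widehat{T}_1)$ produced by Definition \ref{symplectic_form} applied to the pair $(\widehat{T}_0,\widehat{T}_1)$, namely
\[
\langle \widehat{T}_1(x,a),(y,b)\rangle_{H\oplus W}-\langle (x,a),\widehat{T}_1(y,b)\rangle_{H\oplus W},
\]
coincides with the form $[\cdot,\cdot]_{H\oplus W}$ introduced in Definition \ref{general symplectic form}. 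This is precisely the identity recorded in equation (\ref{symp-form-eq}) during the proof of Theorem \ref{adjoint_lemma}, so no new calculation is needed.

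With this identification in hand, Theorem \ref{The GKN-EM Theorem}(i) applied to any self-adjoint $\widehat{T}$ with $\widehat{T}_0\subseteq \widehat{T}\subseteq \widehat{T}_1$ produces a collection $\{(x_j,a_j)\}_{j=1}^{\mathrm{def}(T_0)}\subseteq \mathcal{D}(\widehat{T}_1)$ that is linearly independent modulo $\mathcal{D}(\widehat{T}_0)$ and satisfies the symmetry condition $[(x_j,a_j),(x_k,a_k)]_{H\oplus W}=0$; i.e.\ it is a GKN set for $\widehat{T}_0$ in the sense of conditions $(\alpha)$ and $(\beta)$, and $\mathcal{D}(\widehat{T})$ is described by the boundary conditions (\ref{F2}). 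Since $\widehat{T}\subseteq \widehat{T}_1$ forces $\widehat{T}$ to act as $\widehat{T}_1$ on its domain, formula (\ref{F1}) holds. This yields (c). Conversely, given a GKN set satisfying $(\alpha)$ and $(\beta)$, Theorem \ref{The GKN-EM Theorem}(ii) shows that the operator defined by (\ref{F1})--(\ref{F2}) is self-adjoint and sandwiched between $\widehat{T}_0$ and $\widehat{T}_1$, which is (d).

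I do not anticipate a genuinely hard step, since the deep work has already been done: closedness and the adjoint calculation in Theorem \ref{adjoint_lemma} (whose proof also hands us the symplectic identity (\ref{symp-form-eq}) for free), the deficiency-index computation in Lemma \ref{def_space_iso}, and the abstract symplectic GKN-EM machinery of Theorem \ref{The GKN-EM Theorem}. The only point that deserves explicit care in the write-up is the translation of terminology: confirming that ``GKN set for $\widehat{T}_0$'' in Definition \ref{glazman_set}, read in the extended space with the form $[\cdot,\cdot]_{H\oplus W}$, is exactly conditions $(\alpha)$ and $(\beta)$ of the theorem statement, so that the hypotheses and conclusions of Theorem \ref{The GKN-EM Theorem} line up verbatim with those of parts (c) and (d).
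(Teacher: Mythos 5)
Your proposal is correct and follows exactly the route the paper intends: the paper gives no separate proof of this theorem, presenting it as a summary whose parts (a) and (b) are Theorems \ref{min_op_closed}, \ref{adjoint_lemma}, \ref{max_min_thm} and Lemma \ref{def_space_iso}, and whose parts (c) and (d) follow by applying Theorem \ref{The GKN-EM Theorem} to the pair $(\widehat{T}_0,\widehat{T}_1)$ in $H\oplus W$, with the identification of the relevant symplectic form supplied by equation (\ref{symp-form-eq}). Your explicit verification of the hypotheses and of the coincidence of the two symplectic forms is precisely the checking the paper leaves implicit.
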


\section{Examples\label{Examples}}

\subsection{Example 1: The Legendre Type Self-Adjoint
Operator\label{Section Four}}

Throughout this example, we let $H=L^{2}(-1,1)$ (with its usual inner product)
and $W=\mathbb{C}^{2},$ endowed with the weighted Euclidean inner product%
\begin{equation}
\left\langle (a_{1},b_{1}),(a_{2},b_{2})\right\rangle _{W}=\dfrac
{a_{1}\overline{a}_{2}+b_{1}\overline{b}_{2}}{A}\quad((a_{1},b_{1}%
),(a_{2},b_{2})\in W); \label{Weighted C^2 IP}%
\end{equation}
here $A$ is a fixed, positive constant. Let $\{\xi_{1},\xi_{2}\}$ be the
orthonormal basis in $W$ given by%
\begin{equation}
\xi_{1}=(\sqrt{A},0)\text{ and }\xi_{2}=(0,\sqrt{A})
\label{Orthonormal Basis for W}%
\end{equation}
and, for this example, suppose $B:W\rightarrow W$ is the zero self-adjoint operator.

In \cite{EKL-QM, EL-DIE, AMKrall-1981}, the authors discuss the spectral
analysis of the Legendre type differential expression defined earlier in
(\ref{legendre type equation}); that is,
\begin{equation}
\ell_{LT}[y](u):=\left(  (1-u^{2})^{2}y^{\prime\prime}(u)\right)
^{\prime\prime}-\left(  (8+4A(1-u^{2}))y^{\prime}(u)\right)  ^{\prime}
\label{Legendre type DE}%
\end{equation}
where $A$ is the same constant appearing in (\ref{Weighted C^2 IP}). This
differential expression was first discovered by H.\ L. Krall
\cite{HLKrall-1938, HLKrall-1940}. When
\begin{equation}
\lambda_{n}=n(n+1)(n^{2}+n+4A-2)\quad(n\in\mathbb{N}_{0}),
\label{Eigenvalues of Legendre type}%
\end{equation}
the equation $\ell_{LT}[y]=\lambda_{n}y$ has a polynomial solution
$y=P_{n,A}(u)$ of degree $n;$ that is,
\begin{equation}
\ell_{LT}[P_{n,A}]=\lambda_{n}P_{n,A}\quad(n\in\mathbb{N}_{0}).
\label{LT eigenvalue equation}%
\end{equation}
The sequence $\{P_{n,A}\}_{n=0}^{\infty}$ is called the \textit{Legendre type
polynomials}; they form a complete orthogonal sequence in the Hilbert space
$L_{\mu}^{2}[-1,1],$ where%
\[
L_{\mu}^{2}[-1,1]=\{f:[-1,1]\rightarrow\mathbb{C}\mid f\text{ is Lebesgue
measurable with }\int_{[-1,1]}\left\vert f\right\vert ^{2}d\mu<\infty\},
\]
with inner product%
\[
\left\langle f,g\right\rangle _{\mu}=\dfrac{f(-1)\overline{g}(-1)}{A}%
+\int_{-1}^{1}f(u)\overline{g}(u)du+\dfrac{f(1)\overline{g}(1)}{A},
\]
and where $d\mu$ is the Lebesgue-Stieltjes measure given by
\[
d\mu=dx+\dfrac{1}{A}\delta(x+1)+\dfrac{1}{A}\delta(x-1).
\]
When $y=P_{n,A},$ we see from (\ref{Legendre type DE}) and
(\ref{LT eigenvalue equation}) that%
\begin{equation}
\mp8AP_{n,A}^{\prime}(\pm1)=\lambda_{n}P_{n,A}(\pm1).
\label{Legendre Type Polynomial identity}%
\end{equation}
Various properties of the Legendre type polynomials can be found in
\cite{AMKrall-1981}.

Because the measure $\mu$ has jumps at $u=\pm1,$ the classic GKN theory is not
immediately applicable in finding a self-adjoint operator representation $T$
of $\ell_{LT}[\cdot]$ in $L_{\mu}^{2}[-1,1].$ In order to construct $T,$
Everitt and Littlejohn \cite{EKL-QM, EL-DIE} first studied properties of
functions in the maximal domain
\[
\mathcal{D}(T_{1})=\{x:(-1,1)\rightarrow\mathbb{C}\mid x,x^{\prime}%
,x^{\prime\prime},x^{\prime\prime\prime}\in AC_{\text{\textrm{loc}}%
}(-1,1);x,\ell_{LT}[x]\in L^{2}(-1,1)\},
\]
where $T_{1}$ is the maximal operator, generated by $\ell_{LT}[\cdot],$ in the
Hilbert space $L^{2}(-1,1).$ They establish the remarkable smoothness
property
\begin{equation}
x\in\mathcal{D}(T_{1})\Longrightarrow x^{\prime\prime}\in L^{2}(-1,1)
\label{L2 smoothness}%
\end{equation}
and hence, upon making the natural identifications
\begin{equation}
x(\pm1)=\lim_{u\rightarrow\pm1}x(u)\qquad x^{\prime}(\pm1)=\lim_{u\rightarrow
\pm1}x^{\prime}(u), \label{endpoint identification}%
\end{equation}
we can say that
\[
x\in\mathcal{D}(T_{1})\Longrightarrow x,x^{\prime}\in AC[-1,1].
\]
Moreover, they prove that the associated sesquilinear form has the simple
formulation
\begin{equation}
\lbrack x,y]_{H}=8(x(1)\overline{y}^{\prime}(1)-x^{\prime}(1)\overline
{y}(1)+x^{\prime}(-1)\overline{y}(-1)-x(-1)\overline{y}^{\prime}%
(-1))\qquad(x,y\in\mathcal{D}(T_{1})). \label{sesquilinear form 4}%
\end{equation}
Considering this last formula and Lemma \ref{minimal domain}, it is apparent
that the minimal domain associated with $\ell_{LT}[\cdot]$ is explicitly given
by
\begin{equation}
\mathcal{D}(T_{0})=\left\{  x\in\mathcal{D}(T_{1})\mid x(\pm1)=x^{\prime}%
(\pm1)=0\right\}  . \label{delta min 4}%
\end{equation}
The deficiency index of the minimal operator $T_{0},$ generated by $\ell
_{LT}[\cdot],$ in $L^{2}(-1,1)$ is \textrm{def}$(T_{0})=2.$ This follows since
each endpoint $u=\pm1$ is in the limit-3 case which can be shown by a
Frobenius analysis. We emphasize that we are not seeking to find self-adjoint
extensions of $T_{0}$ in $L^{2}(-1,1)$ but instead we want to find a
self-adjoint representation of $\ell_{LT}[\cdot]$ in $L_{\mu}^{2}[-1,1]$ which
produces the Legendre type polynomials $\{P_{n,A}\}_{n=0}^{\infty}$ as
eigenfunctions. By analyzing functions in $\mathcal{D}(T_{1}),$ Everitt and
Littlejohn show that the operator $T:\mathcal{D}(T)\subseteq$ $L_{\mu}%
^{2}[-1,1]\rightarrow L_{\mu}^{2}[-1,1]$ defined by%
\begin{align}
Tx(u)  &  =\left\{
\begin{array}
[c]{ll}%
-8Ax^{\prime}(-1) & u=-1\\
\ell_{LT}[x](u) & -1<u<1\\
8Ax^{\prime}(1) & u=1
\end{array}
\right. \label{Self-Adjoint operator T in Jump Space}\\
x  &  \in\mathcal{D}(T):=\mathcal{D}(T_{1})\nonumber
\end{align}
is self-adjoint, has the Legendre type polynomials $\{P_{n,A}\}_{n=0}^{\infty
}$ as eigenfunctions, and has discrete spectrum
\[
\sigma(T)=\sigma_{p}(T)=\{\lambda_{n}\mid n\in\mathbb{N}_{0}\},
\]
where each $\lambda_{n}$ is given in (\ref{Eigenvalues of Legendre type}). It
is surprising that the maximal domain $\mathcal{D}(T_{1})$ is the domain of a
self-adjoint operator in $L_{\mu}^{2}[-1,1]$. By the GKN Theorem,
$\mathcal{D}(T_{1})$ cannot be the domain of a self-adjoint extension of
$T_{0}$ in $L^{2}(-1,1).$

We now show, using the results developed in this paper, how to construct the
self-adjoint operator $T$ given in
(\ref{Self-Adjoint operator T in Jump Space}) in the direct sum space $H\oplus
W.$ Indeed, below, we construct a self-adjoint operator $\widehat{T}$ that is,
essentially, the operator $T$ defined in
(\ref{Self-Adjoint operator T in Jump Space}). With this alternative approach,
we will see how \textit{continuity} is a GKN-EM boundary condition that
produces the Legendre type self-adjoint operator $\widehat{T}.$

The first step in our analysis is to observe that the space $L_{\mu}%
^{2}[-1,1]$ is isometrically isomorphic to the direct sum
\[
H\oplus W=\left\{  (x,(a,b))\mid x\in H;(a,b)\in W\right\}  .
\]

Next define $t_{j}\in\mathcal{D}(T_{1})$ $(j=1,2)$ by%
\[
t_{1}(u)=\left\{
\begin{array}
[c]{ll}%
\sqrt{A} & u\text{ near }-1\\
0 & u\text{ near }1
\end{array}
\right.  \text{ }t_{2}(u)=\left\{
\begin{array}
[c]{ll}%
0 & u\text{ near }-1\\
\sqrt{A} & u\text{ near }1;
\end{array}
\right.
\]
we remark that such functions in $\mathcal{D}(T_{1})$ exist by Naimark's
Patching Lemma \cite[Lemma 2, Section 17.3]{Naimark}. It is straightforward to
see, using (\ref{sesquilinear form 4}) and (\ref{delta min 4}), that
$\{t_{1},t_{2}\}$ is a GKN set for $T_{0}.$ Consequently, we see that%
\begin{equation}
\Delta_{0}=\{x_{0}+c_{1}t_{1}+c_{2}t_{2}\mid x_{0}\in\mathcal{D}(T_{0}%
);c_{1},c_{2}\in\mathbb{C}\}, \label{Delta_0}%
\end{equation}
where $\Delta_{0}$ is defined in (\ref{Delta_0 set}). Moreover,%
\begin{equation}
\Psi(x_{0}+c_{1}t_{1}+c_{2}t_{2})=c_{1}\xi_{1}+c_{2}\xi_{2}=\langle c_{1}%
\sqrt{A},c_{2}\sqrt{A}\rangle, \label{Psi}%
\end{equation}
where $\{\xi_{1},\xi_{2}\}$ is defined in (\ref{Orthonormal Basis for W}) and
where $\Psi:\Delta_{0}\rightarrow W$ is the map defined in
(\ref{Psi zero on minimal domain}). Using (\ref{sesquilinear form 4}),
calculations show that%
\[
\lbrack x,t_{1}]_{H}\text{ }=8\sqrt{A}x^{\prime}(-1),\text{ }[x,t_{2}%
]_{H}=-8\sqrt{A}x^{\prime}(1)\quad(x\in\mathcal{D}(T_{1})).
\]
It follows that%
\begin{equation}
\Omega x=8\sqrt{A}x^{\prime}(-1)\xi_{1}-8\sqrt{A}x^{\prime}(1)\xi
_{2}=(8Ax^{\prime}(-1),-8Ax^{\prime}(1))\quad(x\in\mathcal{D}(T_{1})),
\label{Omega f}%
\end{equation}
where $\Omega:\mathcal{D}(T_{1})\rightarrow W$ is the mapping defined in
(\ref{Omega map}).

The minimal operator $\widehat{T}_{0}:\mathcal{D}(\widehat{T}_{0})\subseteq$
$H\oplus W\rightarrow H\oplus W$, in this example, is given by%
\begin{align}
\widehat{T}_{0}(x,\Psi x)  &  =(T_{1}x,B\Psi x)=(\ell_{LT}%
[x],(0,0))\label{T_0 for Legendre type}\\
\mathcal{D}(\widehat{T}_{0})  &  =\{(x,\Psi x)\mid x\in\Delta_{0}\}.
\label{Domain of T_0 for Legendre type}%
\end{align}
From the theory we established in Section \ref{Section Three}, $\widehat
{T}_{0}$ is a closed, symmetric operator in $H\oplus W$ with \textrm{def}%
$\widehat{T}_{0}=2.$

Using (\ref{Omega f}), we see that the associated maximal operator
$\widehat{T}_{1}:\mathcal{D}(\widehat{T}_{1})\subseteq$ $H\oplus W\rightarrow
H\oplus W$ is given explicitly by%
\begin{align}
\widehat{T}_{1}(x,(a,b))  &  =(\ell_{LT}[x],(-8Ax^{\prime}(-1),8Ax^{\prime
}(1)))\label{T_1 for Legendre type}\\
\mathcal{D}(\widehat{T}_{1})  &  =\{(x,(a,b))\mid x\in\mathcal{D}%
(T_{1});\text{ }a,b\in\mathbb{C}\}. \label{Domain of T_1 for Legendre type}%
\end{align}
From (\ref{Weighted C^2 IP}), (\ref{sesquilinear form 4}) and (\ref{Omega f}),
a calculation shows that the symplectic form $[\cdot,\cdot]_{H\oplus W}, $
defined in (\ref{General Symplectic Form}), is given by%
\begin{align}
&  [(x,(a_{1},b_{1})),(y,(a_{2},b_{2}))]_{H\oplus W}\nonumber\\
&  =[x,y]_{H}-\left\langle \Omega x,(a_{2},b_{2})\right\rangle _{W}%
+\left\langle (a_{1},b_{1}),\Omega y\right\rangle _{W}\nonumber\\
&  =8(x(1)\overline{y}^{\prime}(1)-x^{\prime}(1)\overline{y}(1)+x^{\prime
}(-1)\overline{y}(-1)-x(-1)\overline{y}^{\prime}%
(-1))\label{Legendre type symplectic form}\\
&  -8x^{\prime}(-1)\overline{a}_{2}+8x^{\prime}(1)\overline{b}_{2}%
+8Ay^{\prime}(-1)a_{1}-8Ay^{\prime}(1)b_{1}\nonumber
\end{align}
for $(x,(a_{1},b_{1})),$ $(y,(a_{2},b_{2}))\in\mathcal{D}(\widehat{T}_{1}).$
Define $x_{j}\in\mathcal{D}(T_{1})$, for $j=1,2,$ by%
\[
x_{1}(u)=\left\{
\begin{array}
[c]{ll}%
0 & u\text{ near }-1\\
\sqrt{A}(u-1) & u\text{ near }1
\end{array}
\right.  ,\text{ }x_{2}(u)=\left\{
\begin{array}
[c]{ll}%
\sqrt{A}(u+1) & u\text{ near }-1\\
0 & u\text{ near }1.
\end{array}
\right.
\]
From (\ref{sesquilinear form 4}) and (\ref{delta min 4}), we see that
$\{x_{1},x_{2}\}$ is a GKN set for $T_{0}.$ Calculations also show that%
\begin{equation}
\lbrack x,x_{1}]_{H}=8\sqrt{A}x(1)\text{ and }[x,x_{2}]_{H}=-8\sqrt
{A}x(-1)\quad(x\in\mathcal{D}(T_{1})). \label{Calculations with x_1 and x_2}%
\end{equation}
In addition, we see that%
\begin{equation}
\Omega x_{1}=(0,-8A\sqrt{A})\text{ and }\Omega x_{2}=(8A\sqrt{A},0).
\label{Calculations with Omega x_1 and x_2}%
\end{equation}
We now claim that

\begin{proposition}
\label{Proposition 1} $\{(x_{1},(0,0)),(x_{2},(0,0))\}$ is a GKN set for
$\widehat{T}_{0}.$

\begin{proof}
Suppose that%
\[
c_{1}(x_{1},(0,0))+c_{2}(x_{2},(0,0))=(c_{1}x_{1}+c_{2}x_{2},(0,0))\in
\mathcal{D(}\widehat{T}_{0}).
\]
By definition of $\mathcal{D}(\widehat{T}_{0}),$ we see that $\Psi(c_{1}%
x_{1}+c_{2}x_{2})=(0,0).$ This implies that $c_{1}x_{1}+c_{2}x_{2}%
\in\mathcal{D}(T_{0}).$ However since $\{x_{1},x_{2}\}$ is a GKN set for
$T_{0}$, we must have $c_{1}=c_{2}=0$. Hence $\{(x_{1},(0,0)),\langle
x_{2},(0,0))\} $ is linearly independent modulo $\mathcal{D}(\widehat{T}%
_{0}).$ Next, using $($\ref{Legendre type symplectic form}$),$ we see that%
\[
[(x_{1},(0,0)),(x_{2},(0,0))]_{H\oplus W}=[x_{1},x_{2}]_{H}=0.
\]
Calculations also show that%
\[
\lbrack(x_{j},(0,0)),(x_{j},(0,0))]_{H\oplus W}=0\quad(j=1,2).
\]
This completes the proof of the Proposition.
\end{proof}
\end{proposition}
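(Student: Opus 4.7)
The plan is to verify, one at a time, the two defining properties of a GKN set from Definition~\ref{glazman_set} applied to the minimal operator $\widehat{T}_0$ in $H\oplus W$. Both properties should descend essentially for free from the corresponding facts about $\{x_1,x_2\}$ in the base space, combined with the explicit descriptions of $\mathcal{D}(\widehat{T}_0)$, $\Psi$, $\Omega$, and $[\cdot,\cdot]_{H\oplus W}$ that were set up in Section~\ref{Section Three}.

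For linear independence modulo $\mathcal{D}(\widehat{T}_0)$, I would start with the assumption
\[
c_1(x_1,(0,0))+c_2(x_2,(0,0)) = (c_1 x_1+c_2 x_2,(0,0))\in\mathcal{D}(\widehat{T}_0).
\]
The description $\mathcal{D}(\widehat{T}_0)=\{(x,\Psi x)\mid x\in\Delta_0\}$ from Definition~\ref{min_op} then forces $c_1 x_1+c_2 x_2\in\Delta_0$ together with $\Psi(c_1 x_1+c_2 x_2)=(0,0)$. Writing the element in its $\Delta_0$-decomposition as $x_0+d_1 t_1+d_2 t_2$ with $x_0\in\mathcal{D}(T_0)$, applying $\Psi$ yields $d_1\xi_1+d_2\xi_2=(0,0)$; since $\{\xi_1,\xi_2\}$ is an orthonormal basis of $W$, this gives $d_1=d_2=0$, which places $c_1 x_1+c_2 x_2$ inside $\mathcal{D}(T_0)$. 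Linear independence of $\{x_1,x_2\}$ modulo $\mathcal{D}(T_0)$ (which is part of the GKN hypothesis on $\{x_1,x_2\}$) then finishes the job: $c_1=c_2=0$.

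For the symmetry conditions, the plan is to evaluate the formula in Definition~\ref{general symplectic form} at pairs with trivial $W$-coordinates. Both inner-product terms involving $\Omega$ vanish identically, leaving
\[
[(x_i,(0,0)),(x_j,(0,0))]_{H\oplus W}=[x_i,x_j]_H\qquad (i,j=1,2).
\]
These base-space brackets all vanish since $\{x_1,x_2\}$ is a GKN set for $T_0$, supplying both the cross condition and the two diagonal conditions simultaneously.

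The main (modest) obstacle is purely bookkeeping in the linear independence step: one needs to invoke the uniqueness of the $\mathcal{D}(T_0)$-plus-$\mathrm{span}\{t_1,t_2\}$ decomposition of an element of $\Delta_0$ in order to conclude $d_1=d_2=0$ from $\Psi=0$. That uniqueness itself rests on linear independence of $\{t_1,t_2\}$ modulo $\mathcal{D}(T_0)$, which is guaranteed because $\{t_1,t_2\}$ is a GKN set. Beyond this, the proof amounts to a transcription of the base-space GKN property of $\{x_1,x_2\}$ into the extended Hilbert space setting.
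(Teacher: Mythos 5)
Your proposal is correct and follows essentially the same route as the paper's own proof: reduce linear independence modulo $\mathcal{D}(\widehat{T}_0)$ to linear independence of $\{x_1,x_2\}$ modulo $\mathcal{D}(T_0)$ via the condition $\Psi(c_1x_1+c_2x_2)=(0,0)$, and observe that the $\Omega$-terms in $[\cdot,\cdot]_{H\oplus W}$ vanish when the $W$-components are zero. Your only addition is to spell out, via the $\Delta_0$-decomposition and the uniqueness coming from $\{t_1,t_2\}$ being linearly independent modulo $\mathcal{D}(T_0)$, the step the paper states without detail, namely that $\Psi(c_1x_1+c_2x_2)=(0,0)$ forces $c_1x_1+c_2x_2\in\mathcal{D}(T_0)$.
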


We now find the appropriate self-adjoint operator $\widehat{T}$ in $H\oplus W
$ having the Legendre type polynomial vectors $\{(P_{n,A},(P_{n,A}%
(-1),P_{n,A}(1)))\}_{n=0}^{\infty}$ as eigenfunctions. Indeed, using Theorem
\ref{Self-Adjoint Operators in H+W} part (d), the operator $\widehat
{T}:\mathcal{D}(\widehat{T})\subseteq$ $H\oplus W\rightarrow H\oplus W$ ,
defined by
\begin{align}
\widehat{T}(x,(a,b))  &  =(\ell_{LT}[x],(-8Ax^{\prime}(-1),8Ax^{\prime
}(1)))\label{T for LT in Jump Space}\\
\mathcal{D}(\widehat{T})  &  =\{(x,(a,b))\in\mathcal{D}(\widehat{T}_{1}%
)\mid\lbrack(x,(a,b)),(x_{j},(0,0))]_{H\oplus W}=0\text{ }(j=1,2)\}
\label{Domain of T in Jump Space}%
\end{align}
is self-adjoint.

We now investigate each of the two boundary conditions in
(\ref{Domain of T in Jump Space}). From (\ref{Legendre type symplectic form}),
(\ref{Calculations with x_1 and x_2}) and
(\ref{Calculations with Omega x_1 and x_2}), a calculation shows that%
\begin{align*}
0  &  =[(x,(a,b)),(x_{1},(0,0))]_{H\oplus W}\\
&  =[x,x_{1}]_{H}-\left\langle \Omega x,(0,0)\right\rangle _{W}+\left\langle
(a,b),\Omega x_{1}\right\rangle _{W}\\
&  =[x,x_{1}]_{H}+\langle(a,b),(0,-8A\sqrt{A})\rangle_{W}\\
&  =8\sqrt{A}x(1)-\dfrac{8A\sqrt{A}b}{A}=8\sqrt{A}x(1)-8\sqrt{A}b,
\end{align*}
implying
\begin{equation}
b=x(1). \label{b=x(1)}%
\end{equation}

\noindent A similar calculation, using (\ref{Calculations with x_1 and x_2})
and (\ref{Calculations with Omega x_1 and x_2}), yields%
\[
0=[(x,(a,b)),(x_{2},(0,0))]_{H\oplus W}=-8\sqrt{A}x(-1)+8\sqrt{A}a
\]
which establishes%
\begin{equation}
a=x(-1). \label{a=x(-1)}%
\end{equation}
Hence the domain of $\widehat{T},$ given in (\ref{Domain of T in Jump Space}),
simplifies to
\begin{equation}
\mathcal{D}(\widehat{T})=\{(x,(x(-1),x(1)))\mid x\in\mathcal{D}(T_{1})\}.
\label{Domain of T in Jump Space 2}%
\end{equation}
Notice that this domain (\ref{Domain of T in Jump Space 2}) extends the
continuity of each $x\in\mathcal{D}(T_{1})$ from $(-1,1)$ to the closure
$[-1,1].$ It is remarkable that, in this sense, \textit{continuity} is a
GKN-EM boundary condition. Furthermore, from (\ref{LT eigenvalue equation}),
(\ref{Legendre Type Polynomial identity}) and (\ref{T for LT in Jump Space}),
notice that%
\begin{align*}
\widehat{T}(P_{n,A},(P_{n,A}(-1),P_{n,A}(1)))  &  =(\ell_{LT}[P_{n,A}%
],(-8AP_{n,A}^{\prime}(-1),8AP_{n,A}^{\prime}(1)))\\
&  =(\lambda_{n}P_{n,A},(\lambda_{n}P_{n,A}(-1),\lambda_{n}P_{n,A}(1)))\\
&  =\lambda_{n}(P_{n,A},(P_{n,A}(-1),P_{n,A}(1))).
\end{align*}
Moreover, $\widehat{T}$ is the same operator as $T$ defined in
(\ref{Self-Adjoint operator T in Jump Space}).

\begin{remark}
If $B:W\rightarrow W$ is an arbitrary self-adjoint operator in $W,$ the
operator $\widehat{S}:\mathcal{D}(\widehat{S})\subseteq H\oplus W\rightarrow
H\oplus W,$ defined by%
\begin{align*}
\widehat{S}(x,(a,b))  &  =(\ell_{LT}[x],B(a,b)+(-8Ax^{\prime}(-1),8Ax^{\prime
}(1)))\\
\mathcal{D}(\widehat{S})  &  =\{(x,(x(-1),x(1)))\mid x\in\mathcal{D}(T_{1})\},
\end{align*}
is self-adjoint in $H\oplus W$. However, it is the case that the Legendre type
polynomial vectors $\{(P_{n,A},(P_{n,A}(-1),P_{n,A}(1)))\}_{n=0}^{\infty}$ are
eigenfunctions of $\widehat{S}$ if and only if $B=0.$
\end{remark}

\subsection{Example 2: A Simple First-Order Differential Operator}

Let $H=L^{2}[0,1]$ be endowed with the standard $L^{2}$ inner product%
\[
\left\langle x,y\right\rangle _{H}=\int_{-1}^{1}x(u)\overline{y}(u)du
\]
and let $W=\mathbb{C}$ have the usual Euclidean inner product%
\[
\left\langle a,b\right\rangle _{W}:=a\overline{b}\quad(a,b\in W).
\]
In this example, we show how to construct a self-adjoint operator in $H\oplus
W$ generated by the first-order Lagrangian symmetric differential expression
\[
\ell\lbrack x](u)=ix^{\prime}(u).
\]
Our construction can be modified to find numerous other self-adjoint operators
in $H\oplus W$ generated by $\ell\lbrack\cdot].$

The maximal and minimal domains in $H$ associated with $\ell\lbrack\cdot]$ are
respectively given by%
\begin{align*}
T_{1}x  &  =ix^{\prime}\\
\mathcal{D}(T_{1})  &  =\{x:[0,1]\rightarrow\mathbb{C}\mid x\in AC[0,1];\text{
}x^{\prime}\in H\},
\end{align*}
and%
\begin{align}
T_{0}x  &  =ix^{\prime}\nonumber\\
\mathcal{D}(T_{0})  &  =\{x\in\mathcal{D}(T_{1})\mid x(0)=x(1)=0\};
\label{Ex 2 - minimal domain}%
\end{align}
see \cite[Chapter 13, Example 13.4]{Rudin}. The symplectic form $[\cdot
,\cdot]_{H}$ associated with $T_{1}$is given by%
\[
\lbrack x,y]_{H}=ix(1)\overline{y}(1)-ix(0)\overline{y}(0).
\]
An elementary calculation shows that the deficiency indices of $T_{0}$ are
equal with $\mathrm{def}(T_{0})=1.$

Choose $\{\xi_{1}=1\}$ as the orthonormal basis for $W$. All self-adjoint
operators $B:W\rightarrow W$ have the form $Ba=\alpha a$ for some real number
$\alpha;$ we fix one such an operator. Define $t_{1}\in\mathcal{D}(T_{1})$ by%
\[
t_{1}(u)\equiv1\quad(0\leq u\leq1)
\]
and note that
\begin{equation}
\lbrack x,t_{1}]_{H}=i(x(1)-x(0)). \label{Ex2 - H symplectic form}%
\end{equation}
It is clear, from (\ref{Ex 2 - minimal domain}), that $t_{1}$ is not the
minimal domain; moreover$,$ from (\ref{Ex2 - H symplectic form}), we see that
\[
\lbrack t_{1},t_{1}]_{H}=0;
\]
this shows that $\{t_{1}\}$ is a GKN set for $T_{0}$ in $H.$ Moreover, with
this GKN set, the reader can readily verify, using Theorem
\ref{The GKN Theorem}, that the operator $T:\mathcal{D}(T)\subset H\rightarrow
H$ defined by%
\begin{align*}
Tx  &  =ix^{\prime}\\
\mathcal{D}(T)  &  =\{x\in\mathcal{D}(T_{1})\mid x(0)=x(1)\}
\end{align*}
is self-adjoint in $H;$ see \cite[Chapter 13, Example 13.4]{Rudin} for an
interesting direct proof of the self-adjointness of $T.$

The operators $\Omega:\mathcal{D}(T_{1})\rightarrow W$ and $\Psi:\Delta
_{0}\rightarrow W$ from Section \ref{Section Three} are given by%
\[
\Omega x=[x,t_{1}]_{H}\xi_{1}=i(x(1)-x(0))
\]
and
\[
\Psi(t_{0}+at_{1})=a\quad(t_{0}\in\mathcal{D}(T_{0})).
\]
The maximal operator $\widehat{T}_{1}:\mathcal{D}(\widehat{T}_{1})\subset
H\oplus W\rightarrow$ $H\oplus W$ and the minimal operator $\widehat{T}%
_{0}:\mathcal{D}(\widehat{T}_{0})\subset H\oplus W\rightarrow$ $H\oplus W$ can
now both be defined. Indeed,%
\begin{align*}
\widehat{T}_{1}(x,a)  &  =(ix^{\prime},Ba-\Omega x)=(ix^{\prime},\alpha
a-i(x(1)-x(0)))\\
\mathcal{D}(\widehat{T}_{1})  &  =\{(x,a)\mid x\in\mathcal{D}(T_{1}),a\in W\}
\end{align*}
and%
\begin{align*}
\widehat{T}_{0}(x,\Psi x)  &  =(ix^{\prime},B\Psi x)=(ix^{\prime},\alpha\Psi
x)\\
\mathcal{D}(\widehat{T}_{0})  &  =\{(x,\Psi x)\mid x\in\Delta_{0}\}.
\end{align*}

\noindent The symplectic form $[\cdot,\cdot]_{H\oplus W}$ associated with
$\widehat{T}_{1}$ is given by%
\begin{align}
\lbrack(x,a),(y,b)]_{H\oplus W}  &  =[x,y]_{H}-\left\langle \Omega
x,b\right\rangle _{W}+\left\langle a,\Omega y\right\rangle _{W}%
\label{Ex2 - H+W symplectic form}\\
&  =i(x(1)\overline{y}(1)-x(0)\overline{y}(0))-i(x(1)-x(0))\overline
{b}-ai(\overline{y}(1)-\overline{y}(0))\nonumber
\end{align}
for $(x,a),$ $(y,b)\in\mathcal{D}(\widehat{T}_{1}).$

Define $x_{1}\in\mathcal{D}(T_{1})$ by%
\[
x_{1}(u)=\left\{
\begin{array}
[c]{ll}%
1 & u\text{ near }1\\
0 & u\text{ near }0;
\end{array}
\right.
\]
from (\ref{Ex2 - H+W symplectic form}), we see that
\begin{equation}
\lbrack(x,a),(x_{1},1/2)]_{H\oplus W}=ix(1)-i(x(1)-x(0))/2-ai.
\label{Ex2 - Boundary Value}%
\end{equation}
We now show that $\{(x_{1},1/2)\}$ is a GKN set for $\widehat{T_{0}}.$ From
(\ref{Ex2 - Boundary Value}), note that
\begin{equation}
\lbrack(x_{1},1/2),(x_{1},1/2)]_{H\oplus W}=0;
\label{Ex2 - symmetry condition}%
\end{equation}
We claim that $(x_{1},1/2)\notin\mathcal{D}(\widehat{T}_{0});$ indeed,
otherwise, we have%
\begin{equation}
x_{1}(u)=t_{0}(u)+\dfrac{1}{2}t_{1}(u)\quad(u\in\lbrack0,1]) \label{Ex2 - rep}%
\end{equation}
for some $t_{0}\in\mathcal{D}(T_{0}).$ However, choosing $u=0$ or $u=1$ shows
that (\ref{Ex2 - rep}) is not possible. It now follows that $\left\{
(x_{1},1/2)\right\}  $ is a GKN set for $\widehat{T}_{0}.$

From (\ref{Ex2 - Boundary Value}), we see that
\begin{equation}
\lbrack(x,a),(x_{1},1/2)]_{H\oplus W}=0\text{ if and only if }a=\dfrac
{x(0)+x(1)}{2}. \label{Ex2 - BC}%
\end{equation}
It now follows, from Theorem \ref{The GKN-EM Theorem} and (\ref{Ex2 - BC}),
that the operator $\widehat{T}:\mathcal{D}(\widehat{T})\subset H\oplus
W\rightarrow H\oplus W$ defined by%
\begin{align}
\widehat{T}(x,(x(0)+x(1))/2)  &  =(ix^{\prime},\alpha
(x(0)+x(1))/2-i(x(1)-x(0)))\label{Ex2 - T}\\
\mathcal{D}(\widehat{T})  &  =\{(x,(x(0)+x(1))/2)\mid x\in\mathcal{D}%
(T_{1})\}\nonumber
\end{align}
is self-adjoint.

\subsection{Example 3: Variations on the Fourier Self-Adjoint
Operator\label{Section Five}}

For this example, we consider the well known Fourier differential expression%
\begin{equation}
\ell_{F}[y](u)=-y^{\prime\prime}(u)\quad(u\in\lbrack a,b]) \label{Fourier DE}%
\end{equation}
where $[a,b]$ is a compact interval. Here, the Hilbert space is $H=L^{2}[a,b]
$ and, in the sub-examples below, we will consider $W$ to be either
$\mathbb{C}$ or $\mathbb{C}^{2}$ with a weighted Euclidean inner product.

The maximal operator $T_{1}:\mathcal{D}(T_{1})\subset H\rightarrow H$ is
defined by%
\begin{align*}
T_{1}x  &  =\ell_{F}[x]\\
\mathcal{D}(T_{1})  &  =\{x:[a,b]\rightarrow\mathbb{C\mid}x,x^{\prime}\in
AC[a,b];x^{\prime\prime}\in L^{2}[a,b]\}
\end{align*}
while the minimal operator $T_{0}:\mathcal{D}(T_{0})\subset H\rightarrow H$ is
given by%
\begin{align*}
T_{0}x  &  =\ell_{F}[x]\\
\mathcal{D}(T_{0})  &  =\{x\in\mathcal{D}(T_{1})\mathbb{\mid}x(a)=x^{\prime
}(a)=x(b)=x^{\prime}(b)=0\}.
\end{align*}
The symplectic form $[\cdot,\cdot]_{H}$ associated with $T_{1}$ is given by%
\begin{equation}
\lbrack x,y]_{H}=x(b)\overline{y}^{\prime}(b)-x^{\prime}(b)\overline
{y}(b)+x^{\prime}(a)\overline{y}(a)-x(a)\overline{y}^{\prime}(a)\quad
(x,y\in\mathcal{D}(T_{1})). \label{Example 3 Symplectic Form H}%
\end{equation}
Because $\ell_{F}[\cdot]$ is regular, the deficiency index of $T_{0}$ is
\textrm{def}$(T_{0})=2.$ Consequently, by the GKN Theorem, every self-adjoint
extension of $T_{0}$ in $H$ will be a certain restriction of the maximal
operator defined by two appropriate boundary conditions. One such self-adjoint
operator\ is the classical \textit{Fourier trigonometric self-adjoint
operator} in $H,$ generated by $\ell_{F}[f],$ with domain
\[
\left\{  x\in\mathcal{D}(T_{1})\mid x^{\prime}(a)=x^{\prime}%
(b),x(a)=x(b)\right\}  .
\]
We list several examples of self-adjoint operators, generated by $\ell
_{F}[\cdot]$ in $H\oplus W$.

\subsubsection{One Dimensional Extension Spaces}

Consider the one dimensional extension space $W,$ with basis $\{\xi_{1}=1\},$
given by
\begin{gather*}
W=\mathbb{C}\\
\left\langle z_{1},z_{2}\right\rangle _{W}=z_{1}\overline{z_{2}}\quad
(z_{1},z_{2}\in W).
\end{gather*}
Every self-adjoint operator $B:W\rightarrow W$ has the form $Bz=\alpha z$ for
some $\alpha\in\mathbb{R};$ for this example, we fix such a $B.$ Observe the
Hilbert space $H\oplus W$ is suggested in a natural way by the inner product
\[
\int_{a}^{b}x(u)\overline{y}(u)dx+x(b)\overline{y}(b).
\]
With this particular inner product in mind, $H\oplus W$ is isomorphic to the
Lebesgue-Stieltjes integration space generated by the discontinuous
Lebesgue-Stieltjes measure
\[
d\mu=du+\delta(u-b).
\]

\noindent\textbf{Example 3.1 }With the partial GKN set $\{t_{1}\}$ for $T_{0}$
in $H$ given by%
\[
t_{1}(u)=\left\{
\begin{array}
[c]{ll}%
1 & u\text{ \textrm{near} }b\\
0 & u\text{ \textrm{near} }a,
\end{array}
\right.
\]
a calculation shows that $\{(x_{1},0),(x_{2},0)\}$ is a GKN set for
$\widehat{T_{0}}$ in $H\oplus W$ where%
\[
x_{1}(u)=\left\{
\begin{array}
[c]{ll}%
u-b & u\text{ \textrm{near} }b\\
0 & u\text{ \textrm{near} }a
\end{array}
\right.  \text{ and }x_{2}(u)=\left\{
\begin{array}
[c]{ll}%
0 & u\text{ \textrm{near} }b\\
u-a & u\text{ \textrm{near} }a.
\end{array}
\right.
\]
We leave it to the reader to check that%
\begin{align*}
(\text{i})\text{ }[(x,a),(x_{1},0)]_{H\oplus W}  &  =x(b)-a\\
(\text{ii})\text{ }[(x,a),(x_{2},0)]_{H\oplus W}  &  =-x(a).
\end{align*}
From these equations, we see that the operator
\begin{align*}
\widehat{T}\mathcal{(}x,x(b))  &  =(-x^{\prime\prime},\alpha x(b)+x^{\prime
}(b))\\
\mathcal{D}(\widehat{T})  &  =\left\{  (x,x(b))\mid x\in\mathcal{D}%
(T_{1});x(a)=0\right\}
\end{align*}
is self-adjoint in $H\oplus W.\bigskip$

\noindent\textbf{Example 3.2 }By picking the partial GKN set $\{t_{1}\}$ for
$T_{0}$ in $H,$ where%
\[
t_{1}(u)=\left\{
\begin{array}
[c]{ll}%
0 & u\text{ \textrm{near} }b\\
u-a & u\text{ \textrm{near} }a,
\end{array}
\right.
\]
and the GKN set $\{(x_{1},0),(x_{2},0)\}$ for $\widehat{T_{0}}$ in $H\oplus
W,$ where%
\[
x_{1}(u)=\left\{
\begin{array}
[c]{ll}%
0 & u\text{ \textrm{near} }b\\
1 & u\text{ \textrm{near} }a
\end{array}
\right.  \text{ and }x_{2}(u)=\left\{
\begin{array}
[c]{ll}%
u-b & u\text{ \textrm{near} }b\\
0 & u\text{ \textrm{near} }a,
\end{array}
\right.
\]
the reader can check that the operator
\begin{align*}
\widehat{T}\mathcal{(}x,x^{\prime}(a))  &  =(-x^{\prime\prime},\alpha
x^{\prime}(a)+x(a))\\
\mathcal{D}(\widehat{T})  &  =\left\{  (x,x^{\prime}(b))\mid x\in
\mathcal{D}(T_{1});x(b)=0\right\}
\end{align*}
is self-adjoint in $H\oplus W.$

\subsubsection{Two Dimensional Extension Spaces}

For the last three examples, let $W=\mathbb{C}^{2}$ have the weighted inner
product
\[
\left\langle (z_{1},z_{2}),(z_{1}^{\prime},z_{2}^{\prime})\right\rangle
_{W}=\frac{z_{1}\overline{z_{1}^{\prime}}}{M}+\frac{z_{2}\overline
{z_{2}^{\prime}}}{N},
\]
where $M,N>0.$ Let $\{\xi_{1}=(\sqrt{M},0),\xi_{2}=(0,\sqrt{N})\}$ be a basis
for $W.$ The reader can check that the most general form of a self-adjoint
operator $B:W\rightarrow W$, using this inner product, has the matrix
representation
\begin{equation}
B=\left(
\begin{array}
[c]{cc}%
\alpha & \beta\\
\overline{\beta}N/M & \gamma
\end{array}
\right)  , \label{Example 3 - B}%
\end{equation}
where $\alpha,\beta\in\mathbb{R}$ and $\beta\in\mathbb{C}.\bigskip$

\noindent\textbf{Example 3.3 }\label{Example 3 - 1}Define $t_{i},x_{i}%
\in\mathcal{D}(T_{1})$ $(i=1,2)$ by%
\[
t_{1}(u)=\left\{
\begin{array}
[c]{ll}%
\sqrt{M} & u\text{ near }a\\
0 & u\text{ near }b
\end{array}
\right.  ,\qquad t_{2}(u)=\left\{
\begin{array}
[c]{ll}%
0 & u\text{ near }a\\
\sqrt{N} & u\text{ near }b
\end{array}
\right.  ,
\]%
\[
x_{1}(u)=\left\{
\begin{array}
[c]{ll}%
\sqrt{M}(u-a) & u\text{ near }a\\
0 & u\text{ near }b
\end{array}
\right.  ,\qquad x_{2}(u)=\left\{
\begin{array}
[c]{ll}%
0 & u\text{ near }a\\
\sqrt{N}(u-b) & u\text{ near }b.
\end{array}
\right.
\]
It is the case that $\{t_{1},t_{2}\}$ is a GKN set for $T_{0}$ in $H$ and
$\{(x_{1},0),(x_{2},0)\}$ is a GKN\ set for $\widehat{T}_{0}$ in $H\oplus W.$
Moreover, using $($\ref{Example 3 Symplectic Form H}$)$, we see that%
\[
\lbrack x,t_{1}]_{H}=x^{\prime}(a)\sqrt{M};\text{ }[x,t_{2}]_{H}=-x^{\prime
}(b)\sqrt{N};\text{ }[x,x_{1}]_{H}=-x(a)\sqrt{M};\text{ }[x,x_{2}%
]_{H}=x(b)\sqrt{N}%
\]
and%
\[
\Omega x=[x,t_{1}]\xi_{1}+[x,t_{2}]\xi_{2}=(x^{\prime}(a)M,-x^{\prime
}(b)N)\quad(x\in\mathcal{D}(T_{1})).
\]
In particular,
\[
\Omega x_{1}=(M^{3/2},0)\text{ and }\Omega x_{2}=(0,-N^{3/2}).
\]
With $z=(z_{1},z_{2})\in W,$ calculations show that%
\begin{equation}
0=[(x,z),(x_{1},0)]_{H\oplus W}=-x(a)\sqrt{M}+z_{1}\sqrt{M}
\label{Example 3 - BC1}%
\end{equation}
and%
\begin{equation}
0=[(x,z),(x_{2},0)]_{H\oplus W}=x(b)\sqrt{N}-z_{2}\sqrt{N}
\label{Example 3 - BC2}%
\end{equation}
yielding%
\[
z_{1}=x(a)\text{ and }z_{2}=x(b).
\]
That is, the boundary conditions expressed in $($\ref{Example 3 - BC1}$)$ and
$($\ref{Example 3 - BC2}$)$ yield continuity of functions in the domain of the
self-adjoint operator $\widehat{T}:\mathcal{D}(\widehat{T})\subset H\oplus
W\rightarrow H\oplus W$ defined by%
\[
\widehat{T}(x,(x(a),x(b)))=(-x^{\prime\prime},B(x(a),x(b))-(x^{\prime
}(a)M,-x^{\prime}(b)N))
\]%
\[
\mathcal{D}(\widehat{T})=\{(x,(x(a),x(b)))\mid x\in\mathcal{D}(T_{1})\},
\]
where $B$ is given in $($\ref{Example 3 - B}$)$ and%
\[
B(x(a),x(b)):=\left(
\begin{array}
[c]{cc}%
\alpha & \beta\\
\overline{\beta}N/M & \gamma
\end{array}
\right)  \left(
\begin{array}
[c]{c}%
x(a)\\
x(b)
\end{array}
\right)  .
\]
In this case, the setting $H\oplus W$ can be identified with the Hilbert
function space $L_{\sigma}^{2}[a,b],$ given by%
\[
L_{\sigma}^{2}[a,b]=\{f:[a,b]\rightarrow\mathbb{C}\mid f\text{ is Lebesgue
measurable on }[a,b]\text{ and }\left\Vert f\right\Vert _{H\oplus W}%
<\infty\},
\]
where $\left\Vert \cdot\right\Vert _{H\oplus W}$ is the norm generated by the
inner product
\[
\left\langle x,y\right\rangle _{H\oplus W}=\int_{a}^{b}x(u)\overline
{y}(u)du+\frac{x(a)\overline{y}(a)}{M}+\frac{x(b)\overline{y}(b)}{N}%
\]
and $d\sigma$ is the Lebesgue-Stieltjes measure generated by the distribution
function%
\[
\sigma(u)=\left\{
\begin{array}
[c]{ll}%
-\dfrac{1}{M}+a & u\leq a\\
u & a<u<b\\
\dfrac{1}{N}+b & u\geq b.
\end{array}
\right.
\]
\textbf{Example 3.4 }For this example, we switch the roles of $\{t_{1}%
,t_{2}\}$ and $\{x_{1},x_{2}\}$ which are defined in Example
\ref{Example 3 - 1}. Note that, in this case, $\{(t_{1},0),(t_{2},0)\}$ is a
GKN set for $\widehat{T}_{0}$ and $\{x_{1},x_{2}\}$ is a GKN set for $T_{0}.$
The calculations given in the previous example hold with the exception
\[
\Omega x=(-x(a)M,x(b)M)\quad(x\in\mathcal{D}(T_{1})).
\]
Again, with $z=(z_{1},z_{2}),$ we find that%
\[
0=[(x,z),(t_{1},0)]_{H\oplus W}=x^{\prime}(a)\sqrt{M}-z_{1}\sqrt{M}%
\]
and%
\[
0=[(x,z),(t_{2},0)]_{H\oplus W}=-x^{\prime}(b)\sqrt{N}+z_{2}\sqrt{N}%
\]
so that%
\[
z_{1}=x^{\prime}(a)\text{ and }z_{2}=x^{\prime}(b).
\]
In this case, the operator $\widehat{T}:\mathcal{D}(\widehat{T})\subset
H\oplus W\rightarrow H\oplus W,$ defined by
\[
\widehat{T}\mathcal{(}x,(x^{\prime}(a),x^{\prime}(b)))=(-x^{\prime\prime
},B(x^{\prime}(a),x^{\prime}(b))-(-x(a)M,x(b)M)),
\]
with domain
\[
\mathcal{D}(\widehat{T})=\left\{  (x,(x^{\prime}(a),x^{\prime}(b)))\mid
x\in\mathcal{D}(T_{1})\right\}
\]
is self-adjoint. In this case, for $x,y\in\mathcal{D}(\widehat{T}),$ the inner
product on $H\oplus W$ simplifies to the discrete Sobolev inner product
\begin{equation}
\left\langle x,y\right\rangle _{H\oplus W}=\int_{a}^{b}x(u)\overline
{y}(u)du+\frac{x^{\prime}(a)\overline{y}^{\prime}(a)}{M}+\frac{x^{\prime
}(b)\overline{y}^{\prime}(b)}{N}. \label{Example 3 - 2 IP}%
\end{equation}
Notice that, because of the derivatives in the discrete part of $($%
\ref{Example 3 - 2 IP}$),$ the closure of $\mathcal{D}(\widehat{T})$ is not a
function space and there is no positive Borel measure generating this inner
product.\bigskip

\noindent\textbf{Example 3.5 }For our last example, we consider a variation of
the last two examples. Indeed, define $t_{i},x_{i}\in\mathcal{D}(T_{1})$
$(i=1,2)$ by%
\[
t_{1}(u)=\left\{
\begin{array}
[c]{ll}%
\sqrt{M} & u\text{ near }a\\
0 & u\text{ near }b
\end{array}
\right.  ,\qquad t_{2}(u)=\left\{
\begin{array}
[c]{ll}%
0 & u\text{ near }a\\
\sqrt{N}(u-b) & u\text{ near }b
\end{array}
\right.  ,
\]%
\[
x_{1}(u)=\left\{
\begin{array}
[c]{ll}%
0 & u\text{ near }a\\
\sqrt{N} & u\text{ near }b
\end{array}
\right.  ,\qquad x_{2}(u)=\left\{
\begin{array}
[c]{ll}%
\sqrt{M}(u-a) & u\text{ near }a\\
0 & u\text{ near }b
\end{array}
\right.
\]
In this case $\{t_{1},t_{2}\}$ is a GKN set for $T_{1}$ and $\{(x_{1}%
,0),(x_{2},0)\}$ is a GKN set for $\widehat{T}_{0}.$ Moreover, a calculation
shows
\[
\Omega x=(x^{\prime}(a)M,x(b)N).
\]
With $z=(z_{1},z_{2}),$ the two boundary conditions%
\[
0=[(x,z),(x_{1},0)]_{H\oplus W}=-x^{\prime}(b)\sqrt{N}+z_{2}\sqrt{N}%
\]%
\[
0=[(x,z),(x_{2},0)]_{H\oplus W}=-x(a)\sqrt{M}+z_{1}\sqrt{M}%
\]
yield
\[
z_{1}=x(a)\text{ and }z_{2}=x^{\prime}(b).
\]
These calculation show that the operator $\widehat{T}$, given by
\[
\widehat{T}\mathcal{(}x,(x(a),x^{\prime}(b)))=(-x^{\prime\prime}%
,B(x(a),x^{\prime}(b))-(x^{\prime}(a)M,x(b)N))
\]
with domain
\[
\mathcal{D}(\widehat{T})=\left\{  (x,(x(a),x^{\prime}(b)))\mid x\in
\mathcal{D}(T_{1})\right\}  ,
\]
is self-adjoint in $H\oplus W.$ For each $x,y\in\mathcal{D}(\widehat{T}),$ the
`mixed' inner product in $H\oplus W$ reduces to%
\[
\left\langle x,y\right\rangle _{H\oplus W}=\int_{a}^{b}x(u)\overline
{y}(u)du+\frac{x(a)\overline{y}(a)}{M}+\frac{x^{\prime}(b)\overline{y}%
^{\prime}(b)}{N}.
\]
As in the last example, no positive Borel measure generates this inner product
and the closure of $\mathcal{D}(\widehat{T})$ in the topology from
$\left\langle \cdot,\cdot\right\rangle _{H\oplus W}$ is not a function space.

\end{document}